\crefname{hypothesis}{Hypothesis}{Hypotheses}
\newtheorem{assumption}{Assumption}
\newtheorem{model}{Model}
\newcommand{\bx}{\boldsymbol{x}}
\title{Two-level overlapping Schwarz methods based on local generalized eigenproblems for Hermitian variational problems
\thanks{Qing Lu and Junxian Wang contributed equally to this work and should be considered co-first authors. Shi Shu is the corresponding author.}
\funding{Qing Lu is supported by Hunan Provincial Innovation Foundation for Postgraduate (CX20190465). Shi Shu is supported by National Natural Science Foundation of China (Grant No. 11971414).
}}
\author{Qing Lu\thanks{School of Mathematics and Computational Science, Hunan Key Laboratory for Computation and Simulation in Science and Engineering,
Xiangtan University, Xiangtan, Hunan 411105, China(\email{lu\_sunny93@163.com}, \email{wangjunxian@xtu.edu.cn}, \email{shushi@xtu.edu.cn}).}%
\and Junxian Wang\footnotemark[2]
\and Shi Shu\footnotemark[2]
\and
Jie Peng \thanks{School of Mathematical Sciences, South China Normal University, Guangzhou 510631, China(\email{pengjie18@m.scnu.edu.cn}).}}%
\begin{document}

\maketitle

\begin{abstract}
The research of two-level overlapping Schwarz (TL-OS) method based on constrained energy minimizing coarse space is still in its infancy, and there exist some defects, e.g. mainly for Poisson-like problems and too heavy computational cost of coarse space construction. In this paper, by introducing appropriate assumptions, we propose more concise coarse basis functions for general Hermitian positive and definite discrete systems, and establish the algorithmic and theoretical frameworks of the corresponding TL-OS methods. Furthermore, to enhance the practicability of the algorithm, we design two economical TL-OS preconditioners and prove the condition number estimate. 
As the first application of the frameworks,
we prove that the assumptions hold for the linear finite element discretization of Poisson problem with high contrast and oscillatory coefficient and the condition number of the TL-OS preconditioned system is robust with respect to the model and mesh parameters. In particular, we also prove that the condition number of the economically preconditioned system is independent of the jump range under a certain jump distribution.
Experimental results show that the first kind of economical preconditioner is more efficient and stable than the existed one. Secondly, we construct TL-OS and the economical TL-OS preconditioners for the plane wave least squares discrete system of Helmholtz equation by using the frameworks. The numerical results for homogeneous and non-homogeneous cases illustrate that the PCG method based on the proposed preconditioners have good stability in terms of the angular frequency, mesh parameters and the number of freedoms in each element.
\end{abstract}

\begin{keywords}
two-level overlapping Schwarz method, generalized eigenproblem, coarse basis function, constrained energy minimization, hermitian problems
\end{keywords}

\begin{AMS}
  65F10, 65N30, 65N55
\end{AMS}

\section{Introduction}\label{introduction}
Hermitian positive and definite discrete variational problems are widely appeared in the field of science and engineering computation. Domain decomposition method (DDM, \cite{TW2005}) is a popular method for solving large-scale discrete systems because of its natural parallelism and suitability for complex problems. DDM based on constrained energy-minimizing coarse space is an important method developed in recent years which can also be divided into nonoverlapping and overlapping types.
Balancing domain decomposition by constraints (BDDC) method which was firstly proposed by C. Dohrmann in 2003(\cite{D2003}) is a representative method(\cite{D2003}-\cite{PW2010}) of the former, and it can be treated as the dual form of dual-primal finite element tearing and interconnecting
(FETI-DP) method(\cite{LW2006}). In order to improve the robustness and universality of BDDC method for complex problems, adaptive BDDC emerges as the times require. It was firstly developed by J. Mandel and B. sousedik in 2007 (\cite{MS2007}) and used in various problems soon afterwards. Among them, adaptive BDDC method based on different generalized eigenproblems are designed for the finite element method (FEM) of second order elliptic problems in \cite{MSS2012}\cite{KC2015}\cite{KRR2016}. In \cite{DP2013}, parallel sum(\cite{AD1969}) was firstly introduced to construct generalized eigenproblem in adaptive BDDC method, and used in various discrete systems and various model problems for constructing appropriate generalized eigenproblems such as FEM and staggered discontinuous Galerkin systems of second order elliptic problems(\cite{KCW2017}\cite{KCX2017}), Raviart-Thomas element system(\cite{DSO2015}), saddle point problems arising from mixed formulations of Darcy flow in porous media(\cite{ZT2017}), the plane wave least squares (PWLS) discretization of Helmholtz equations(\cite{PWS2018}\cite{PSWZ2021}). Furthermore, a unified framework of adaptive BDDC method was proposed in \cite{PD2017}. Latterly adaptive BDDC is also extended to the asymmetric positive definite convection diffusion problem in \cite{PSWZ2021-2}.
While the overlapping DDM based on constrained energy-minimizing coarse space is still in its infancy so far. In \cite{JRG2010}, a coarse space with minimal constrained energy and the corresponding two level overlapping Schwarz(TL-OS) preconditioners are constructed for linear element equations of second order elliptic problems, but there exist some defects, e.g. the condition number is dependent on the overlap width. In \cite{GCEL}, a hybrid overlapping DDM preconditioner is developed for the linear element discretization of Darcy flow equation, \cite{WCK2020} proposed two kinds of TL-OS preconditioners based on the multiscale function introduced in \cite{CEL2018} for finite element discretization of second order elliptic problems, they are both aimed at Poisson-like problems and there are too much cost in the coarse space construction.
Therefore, how to design TL-OS preconditioners with lower computational complexity for more general problems needs further research.

In this paper, by introducing an auxiliary functional which satisfies a certain assumption, we construct more concise coarse basis functions based on constrained energy minimization for the discretizations of general Hermitian positive and definite variational problems, and establish the algorithmic and theoretical frameworks of the corresponding TL-OS preconditioned system. 
It is strictly proved that the condition number of the designed TL-OS preconditioned system only depends on the user-defined threshold $\Lambda$ and the maximum number of adjacent subregions $M$. However it still costs too much since the coarse basis functions should be solved in the whole region $\Omega$. In order to overcome this drawback, we design two economical TL-OS preconditioners $B^{-1}_{k,\psi}$ and $B^{-1}_{k,\bar{\psi}}$. Based on the above assumptions and a newly introduced assumption, we prove that the condition number is still bounded by a constant only dependent on $\Lambda$ and $M$ when $k$ is sufficiently large. It is worth pointing out that the coarse basis formula corresponding to preconditioner $B^{-1}_{k,\psi}$ is more concise, which greatly reduces the cost of coarse space construction in the existing work. 

As the first application of the above frameworks, a TL-OS preconditioner and two economical TL-OS preconditioners with lower computational complexity are designed for the finite element discretization of Poisson problem. It is proved strictly that all the three assumptions hold. 
In addition, we also prove that the condition number of the designed economical TL-OS preconditioned system is independent of the jump range for any given $k$ when the coefficient $\rho(\bx)$ satisfies a certain jump distribution. It is worth mentioning that the coefficient matrix of the first kind of coarse basis is sparse, so it can be solved efficiently with fast solver such as AMG. Various experimental results show that the first economical preconditioner is more stable than the second one. Next, by using the frameworks and introducing an auxiliary functional satisfying the assumption and a partition of unity defined on the dual partition, we construct a TL-OS and the corresponding economical TL-OS preconditioners for the PWLS discrete system of Helmholtz equation.
Numerical experiments are carried out for homogeneous and non-homogeneous cases which illustrate that the corresponding PCG method with TL-OS preconditioners has good stability, and PCG method with economical TL-OS preconditioners weakly depends on the model and mesh parameters when $k\ge 2$.

This paper is organized as follows. In section \ref{frameworks}, algorithmic and theoretical frameworks of a TL-OS and two economical TL-OS preconditioners with low computational complexity are established for general Hermitian positive and definite system. In section \ref{Poisson} and section \ref{Helmholtz}, based on the above frameworks, the TL-OS preconditioners and economical TL-OS preconditioners are designed for the linear finite element system of Poisson equation and the PWLS system of Helmholtz equation, respectively, and the numerical experiments are included.

\section{Hermitian variational problem and TL-OS preconditioned theory}\label{frameworks}
Let $V(\Omega)$ be a Hilbert space on complex filed and we consider the following continuous variational model problem: find $u\in V(\Omega)$, such that
\begin{eqnarray}\label{ucbianfen-continuous}
a(u,v)=f(v),~~\forall v\in V(\Omega),
\end{eqnarray}
where $\Omega\subset \mathbb{R}^d$($d\ge 1$) is a bounded polygonal (polyhedral) domain, $a(\cdot,\cdot)$ is a sesquilinear and Hermitian positive definite functional and $f(\cdot)$ is a bounded source functional. 

Let $V_h:=V_h(\Omega)$ be a discrete space associated with the partition $\mathcal{T}_h$ of $\Omega$ where $h$ is the size of the elements.
Then the approximation of model \eqref{ucbianfen-continuous} can be expressed as: find $u_h\in V_h$ such that
\begin{eqnarray}\label{ucbianfen}
a(u_h,v_h)=f(v_h),~~\forall v_h\in V_h,
\end{eqnarray}
and Hermitian positive definite operator $A_h: V_h\rightarrow V_h$ satisfies
\begin{eqnarray}\label{sys}
A_h u_h=f_h,~~~~u_h\in V_h.
\end{eqnarray}

In the following, we will discuss a two-level overlapping Schwarz (TL-OS) preconditioner for preconditioned conjugate gradient (PCG) method.

\subsection{TL-OS preconditioner}\label{framework-TL-OS}
We first introduce the nonoverlapping subdomain division $\bar{\Omega}=\bigcup_{i=1}^N \bar{\Omega}_i$ where $\Omega_i\bigcap\Omega_j=\emptyset(i\ne j)$ and $N$ is the number of subdomains.
Then each subdomain is extended by several layers of elements in $\mathcal{T}_h$ to obtain an overlapping subdomain partition $\{\Omega'_i\}^N_{i=1}$(\cite{WCK2020}). Let $2\delta$ be the minimum overlapping width between two overlapping subdomains, and $H$ be the size of $\{\Omega_i\}^N_{i=1}$.

Later in the paper, the restriction of sesquilinear form $c(\cdot,\cdot)$ on region $\mathcal{D}$ is usually abbreviated as $c(\cdot, \cdot)_{\mathcal{D}}$ where $\mathcal{D}\subset \bar{\Omega}$ may be a closed area. Define the norm
\begin{eqnarray*}\label{norm-abc}
|u|_{a(\mathcal{D})}^2 &=& a( u, u)_{\mathcal{D}},~~\| u\|_{c(\mathcal{D})}^2 = c( u, u)_{\mathcal{D}},~~c\ne a,
\end{eqnarray*}
especially when $\mathcal{D}=\bar{\Omega}_{i}$, we denote
$a_i(\cdot, \cdot):=a(\cdot, \cdot)_{\bar{\Omega}_{i}}.$

For a given overlapping subdomain $\Omega'_{j}$, we introduce the following subspace of $V_h$
\begin{eqnarray*}
V_h(\Omega'_j) = \{u_j\in V_h: supp(u_j)\subseteq \Omega'_j\}
\end{eqnarray*}
and local operator $A_j: V_h(\Omega'_j)\rightarrow V_h(\Omega'_j)$ such that
\begin{eqnarray}\label{tildeA-def}
 (A_j u_j, v_j)=a(u_j, v_j)_{\Omega'_{j}},~~ \forall u_j,~v_j\in V_h(\Omega'_j).
\end{eqnarray}

Then we can construct the overlapping Schwarz preconditioner $$B^{-1}_s = \Sigma_{j=1}^N \Pi_j A_j^{-1}\Pi^*_j,$$
where the identical prolongation operator $\Pi_{j}: V_h(\Omega'_j) \mapsto V_h,~~j=1,\cdots,N,$
and $\Pi^*_j$ is the conjugate operator of $\Pi_j$.
Now we will construct a coarse space for \eqref{sys} based on local generalized eigenproblems(\cite{KCW2017}\cite{PD2017}\cite{CEL2018}\cite{WCK2020}) to get a more robust preconditioner. 
For this, we introduce a partition of unity $\{\theta_i\}_{i=1}^N$ which satisfy
\begin{equation}\label{theta-def}
\Sigma_{i=1}^N \theta_i(x)=1,~0\le \theta_i(x)\le 1,~x\in \bar{\Omega}~~and~~
supp(\theta_i)\subset \bar{\Omega}'_{i}.
\end{equation}

Denote
\begin{eqnarray}\label{S-i-M}
\mathcal{S}^{(i)}=\{1\le j\le N: \Omega'_{j}\cap \Omega'_{i}\ne \emptyset\},~~
M=\max_{1\le i\le N}|\mathcal{S}^{(i)}|.
\end{eqnarray}


\begin{assumption}\label{assump-1-strong}
For any $u\in V_h$ and $\{\theta_j\}_{j=1}^N$ defined by \eqref{theta-def}, there exists a positive constant $\tilde{C}$ satisfying 
\begin{eqnarray}\label{inequality-s-a-strong}
a_i(\theta_j u,\theta_j u)&\le& \tilde{C}(a_i(u, u)+s_i(u,u)),~~j\in \mathcal{S}^{(i)},~~i=1,\cdots,N,
\end{eqnarray}
where $s_i(\cdot,\cdot)$ is some sesquilinear and Hermitian positive definite functional on $V_h(\bar{\Omega}_i)$, and for any $\eta\in L^{\infty}(\bar{\Omega}_{i})$,
\begin{eqnarray}\label{inequality-Ih-s}
\|\eta u\|_{s_i}&\le& \|\eta\|_{L^{\infty}(\bar{\Omega}_{i})}\|u\|_{s_i},~~\|u\|_{s_i}=\sqrt{s_i(u,u)}.
\end{eqnarray}
\end{assumption}

From \eqref{inequality-s-a-strong} and \eqref{S-i-M}, we have 
\begin{eqnarray}\label{inequality-s-a}
\Sigma_{i=1}^Na(\theta_i u,\theta_i u)_{\Omega'_{i}}&\le& \tilde{C}M\Sigma_{i=1}^N (a_i(u, u)+s_i(u,u)),~~\forall u\in V_h.
\end{eqnarray}
%

Let $V_h(\bar{\Omega}_i)=V_h|_{\bar{\Omega}_i}$, $n_i=dim(V_h(\bar{\Omega}_i))$, we introduce the local generalized eigenproblem on $\Omega_i$: find $\lambda_j^{(i)} \in \mathbb{C}$ and $\tilde{\phi}_j^{(i)} \in V_h(\bar{\Omega}_i)$ such that
\begin{equation}\label{GEP-general}
s_i(\tilde{\phi}_j^{(i)},w) = \lambda_j^{(i)} a_i(\tilde{\phi}_j^{(i)},w), ~~\forall w \in V_h(\bar{\Omega}_i),~j=1,\cdots,n_i.
\end{equation}
It is obvious that $\{\lambda_j^{(i)}\}_{j=1}^{n_i}$ are all positive real numbers. We assume that they are arranged in
descending order
\begin{eqnarray}\label{lambda-order1}
\lambda^{(i)}_{1} \geq  \cdots  \lambda^{(i)}_{{l_{i}}} \geq \Lambda  > \lambda^{(i)}_{l_{i+1}}  \geq  \cdots \geq \lambda^{(i)}_{n_i}>0,
\end{eqnarray}
where $\Lambda>1$ is a user defined threshold and $l_{i}$ is a nonnegative integer.


\begin{lemma}\label{deltaij-a-s}
There exists a series of eigenfunctions $\{\phi_j^{(i)}\}_{j=1}^{n_i}$ corresponding to $\{\lambda_j^{(i)}\}_{j=1}^{n_i}$ such that
\begin{eqnarray}\label{delta-ij}
\left\{
\begin{array}{lll}
&s_i(\phi_j^{(i)}, \phi_l^{(i)})=0=a_i(\phi_j^{(i)}, \phi_l^{(i)}),~~j\ne l,\\
&s_i(\phi_j^{(i)}, \phi_j^{(i)})=1.
\end{array}
\right.
\end{eqnarray}
\end{lemma}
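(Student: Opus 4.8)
The plan is to recognize the statement as the classical \emph{simultaneous diagonalization} of two Hermitian forms, one of which ($a_i$) is positive definite, and to exploit $a_i(\cdot,\cdot)$ itself as a genuine inner product on the finite-dimensional space $V_h(\bar{\Omega}_i)$. First I would define a linear operator $T:V_h(\bar{\Omega}_i)\to V_h(\bar{\Omega}_i)$ by requiring $a_i(Tu,w)=s_i(u,w)$ for all $w\in V_h(\bar{\Omega}_i)$. This is well defined because $a_i$ is Hermitian positive definite, hence an inner product on the finite-dimensional space, so the functional $s_i(u,\cdot)$ has a unique Riesz representative. Using that both forms are Hermitian, I would then verify that $T$ is self-adjoint with respect to the $a_i$-inner product: indeed $a_i(Tu,v)=s_i(u,v)=\overline{s_i(v,u)}=\overline{a_i(Tv,u)}=a_i(u,Tv)$.

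Next I would observe that the generalized eigenproblem \eqref{GEP-general} is equivalent to the ordinary eigenproblem $T\phi=\lambda\phi$: substituting $s_i(\phi,w)=a_i(T\phi,w)$ gives $a_i(T\phi-\lambda\phi,w)=0$ for all $w$, which forces $T\phi=\lambda\phi$ by nondegeneracy of $a_i$. I would then invoke the spectral theorem for the self-adjoint operator $T$ on this finite-dimensional $a_i$-inner-product space, producing an $a_i$-orthonormal basis of eigenvectors $\{\phi_j^{(i)}\}_{j=1}^{n_i}$ associated with $\{\lambda_j^{(i)}\}_{j=1}^{n_i}$, i.e.\ $a_i(\phi_j^{(i)},\phi_l^{(i)})=\delta_{jl}$. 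For $j\ne l$ the eigenrelation $s_i(\phi_j^{(i)},\phi_l^{(i)})=\lambda_j^{(i)}a_i(\phi_j^{(i)},\phi_l^{(i)})$ immediately transfers the $a_i$-orthogonality to $s_i$-orthogonality, giving the first line of \eqref{delta-ij}. Finally, since each $\lambda_j^{(i)}=s_i(\phi_j^{(i)},\phi_j^{(i)})/a_i(\phi_j^{(i)},\phi_j^{(i)})>0$ (both forms being positive definite), I would rescale $\phi_j^{(i)}\mapsto\phi_j^{(i)}/\sqrt{\lambda_j^{(i)}}$; this preserves all off-diagonal orthogonalities and yields $s_i(\phi_j^{(i)},\phi_j^{(i)})=1$, the second line of \eqref{delta-ij}.

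The one delicate point, which I expect to be the main obstacle, is the treatment of \emph{repeated} eigenvalues: orthogonality across distinct eigenvalues is automatic from self-adjointness, but within a degenerate eigenspace I must run a Gram--Schmidt procedure \emph{in the $a_i$-inner product} to construct an $a_i$-orthonormal basis of that eigenspace. Because all vectors in such an eigenspace share a common $\lambda_j^{(i)}$, the identity $s_i=\lambda_j^{(i)}a_i$ again carries $a_i$-orthogonality over to $s_i$-orthogonality there as well, so both conditions of \eqref{delta-ij} survive. Equivalently, the whole argument can be carried out in coordinates: writing $a_i$ and $s_i$ as Hermitian matrices $A$ (positive definite) and $S$, a Cholesky factorization $A=LL^*$ reduces \eqref{GEP-general} to the standard Hermitian eigenproblem for $L^{-1}SL^{-*}$, whose unitary eigenbasis pulls back to the desired $\{\phi_j^{(i)}\}$. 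I would keep the coordinate-free operator version as the cleaner presentation and mention the matrix realization only as a concrete check.
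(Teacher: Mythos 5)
Your proof is correct, and there is nothing in the paper to contrast it with: the paper states this lemma with no proof at all, treating it as the standard simultaneous-diagonalization fact for a pair of Hermitian forms with one of them positive definite. Your route (Riesz representation of $s_i(u,\cdot)$ in the $a_i$-inner product to get a self-adjoint operator $T$, the finite-dimensional spectral theorem, transfer of $a_i$-orthogonality to $s_i$-orthogonality via the eigenrelation, rescaling by $1/\sqrt{\lambda_j^{(i)}}$ to achieve $s_i(\phi_j^{(i)},\phi_j^{(i)})=1$, and Gram--Schmidt in the $a_i$-inner product inside degenerate eigenspaces) is exactly the standard argument that fills this gap, and every step checks out. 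The one hypothesis you invoke beyond what the paper writes explicitly --- that the restricted form $a_i$ is positive \emph{definite} on $V_h(\bar{\Omega}_i)$, not merely semi-definite, so that it is a genuine inner product --- is already implicit in the paper's claim preceding the lemma that all $n_i$ eigenvalues $\lambda_j^{(i)}$ are finite positive reals, so your proof operates under precisely the paper's setting.
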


For ease of notation, we denote the eigenfunctions extended by zero on $\Omega$ to be still $\{\phi_j^{(i)}\}_{j=1}^{n_i}$, 
and introduce some local auxiliary spaces(\cite{CEL2018})
\begin{equation}\label{vauxi-def}
V_{aux}^{(i)}: = span\{\phi_1^{(i)},\cdots,\phi_{l_i}^{(i)}\},~~i=1,\cdots,N
\end{equation}
and a global auxiliary space
$V_{aux}: = \oplus_{i=1}^N V_{aux}^{(i)}.$

Using the above auxiliary spaces, we can define the coarse space $V_0$ as follows
\begin{align}\label{zjb-def}
V_h = \tilde{V}\perp_{a(\cdot,\cdot)} V_0,
\end{align}
where $\perp_{a(\cdot,\cdot)}$ denotes the orthogonality under the inner product $a(\cdot,\cdot)$,
\begin{equation}\label{tilde-V}
\tilde{V}: = \{v\in V_h|s_i(v,w)=0, ~~\forall w\in V_{aux}^{(i)},~~i=1,\cdots,N\}.
\end{equation}
It is easy to know that
$dim(\tilde{V}) = n_h-\sum_{i=1}^N l_i$, where $n_h=dim(V_h)$.

We then introduce functions: find $\psi_j^{(i)}\in V_h$ such that
\begin{eqnarray}\label{psi-sub2-new}
a(\psi_j^{(i)},v) =s_i(\phi_j^{(i)},\pi_iv),~\forall v\in V_h,~~j=1,\cdots,l_i,~~i=1,\cdots,N,
\end{eqnarray}
where the projection operator $\pi_i: V_h \rightarrow V_{aux}^{(i)}$ 
satisfies that for a given function $v\in V_h$
\begin{eqnarray}\label{pi-defsource}
s_i(\pi_i v,w) = s_i(v,w),~~\forall w\in V_{aux}^{(i)}.
\end{eqnarray}

It is obvious that functions $\{\psi_j^{(i)}\}$ defined by \eqref{psi-sub2-new} can span the coarse space $V_0$.
%
For real case, another basis $\{\bar{\psi}_j^{(i)}\}$ of $V_0$ are proposed in \cite{WCK2020} which satisfy
\begin{eqnarray}\label{psi-sub2}
b(\bar{\psi}_j^{(i)},v)=s_i(\phi_j^{(i)},\pi_iv),~\forall v\in V_h,~~j=1,\cdots,l_i,~i=1,\cdots,N,
\end{eqnarray}
where $\pi_i$ is defined by \eqref{pi-defsource},
\begin{eqnarray}\label{b}
b(u,v)=a(u,v) + \Sigma_{l=1}^Ns_l(\pi_l u,\pi_lv),~~\forall u,v \in V_h.
\end{eqnarray}

Comparing the variational problem \eqref{psi-sub2-new} and \eqref{psi-sub2}, it is easy to find that the corresponding coefficient matrix of the former is usually better than that of the latter. 
so the computational complexity of the new basis $\{\psi_j^{(i)}\}$ is lower than $\{\bar{\psi}_j^{(i)}\}$.

We define a restriction of operator $A_h$ on $V_0$ as $A_0: V_0\rightarrow V_0$ which satisfies
\begin{eqnarray}\label{tildeA0-def}
 (A_0u_0, v_0)=a(u_0, v_0),~~ \forall u_0, v_0 \in V_0,
\end{eqnarray}
and introduce an identical lifting operator $\Pi_0: V_0 \mapsto V_h$.

Using the operators $A_0$ and $\Pi_0$, we can construct a TL-OS preconditioner for \eqref{sys} based on local generalized eigenproblems as follows
\begin{eqnarray}\label{B-def-new}
  B^{-1} = \Sigma_{j=1}^N \Pi_j A_j^{-1}\Pi^*_j+\Pi_0 A_0^{-1}\Pi^*_0.
\end{eqnarray}


\subsection{Condition number estimate of $B^{-1}A_h$}\label{framework-TL-OS-theory}
Fistly, noting that $a(\cdot,\cdot)$ is sesquilinear and the support of $u_i(i=1,\cdots,N)$ is $\Omega'_i$, we can prove 
\begin{lemma}\label{theorem-1}
For any $u_0\in V_0, u_i \in V_h(\Omega'_i)(i=1,\cdots,N)$, we have
\begin{eqnarray}\label{max-proof2}
|\Sigma_{i=1}^{N}\Pi_i
 u_i+\Pi_0
 u_0|_{a(\Omega)}^2 \le 2M(\Sigma_{i=1}^{N}| u_i|_{a(\Omega'_i)}^2+| u_0|_{a(\Omega)}^2),
\end{eqnarray}
where the constant $M$ is defined by \eqref{S-i-M}.
\end{lemma}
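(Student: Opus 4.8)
The plan is to separate the coarse contribution from the local contributions and then exploit the finite-overlap structure of $\{\Omega'_i\}$. Write $w := \sum_{i=1}^N \Pi_i u_i + \Pi_0 u_0$ and set $v := \sum_{i=1}^N \Pi_i u_i$. Since $a(\cdot,\cdot)$ is Hermitian positive definite, $|\cdot|_{a(\Omega)}$ is a genuine norm and the elementary bound $a(x+y,x+y)\le 2a(x,x)+2a(y,y)$ holds for all $x,y\in V_h$. Applying it with $x=v$ and $y=\Pi_0 u_0$ gives
\[
|w|_{a(\Omega)}^2 \le 2|v|_{a(\Omega)}^2 + 2|\Pi_0 u_0|_{a(\Omega)}^2 = 2|v|_{a(\Omega)}^2 + 2|u_0|_{a(\Omega)}^2,
\]
where the last equality uses that $\Pi_0$ is the identical lifting. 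It therefore remains to prove the finite-overlap estimate $|v|_{a(\Omega)}^2 \le M\sum_{i=1}^N |u_i|_{a(\Omega'_i)}^2$.

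This is the core step. Expanding the sesquilinear form by bilinearity, $|v|_{a(\Omega)}^2 = \sum_{i,j=1}^N a(\Pi_i u_i, \Pi_j u_j)_{\Omega}$. Because $supp(u_i)\subseteq \Omega'_i$ and $a(\cdot,\cdot)$ is local, each cross term reduces to $a(u_i,u_j)_{\Omega'_i\cap\Omega'_j}$ and hence vanishes whenever $\Omega'_i\cap\Omega'_j=\emptyset$; only the pairs with $j\in\mathcal{S}^{(i)}$ survive. For these pairs I would apply the Cauchy--Schwarz inequality for $a$, namely $|a(\Pi_i u_i,\Pi_j u_j)_{\Omega}| \le |u_i|_{a(\Omega'_i)}\,|u_j|_{a(\Omega'_j)}$ (using $|u_i|_{a(\Omega)}=|u_i|_{a(\Omega'_i)}$ by the support property), followed by Young's inequality $|u_i|_{a(\Omega'_i)}|u_j|_{a(\Omega'_j)} \le \frac{1}{2}\bigl(|u_i|_{a(\Omega'_i)}^2+|u_j|_{a(\Omega'_j)}^2\bigr)$. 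Summing over the at most $M$ indices $j\in\mathcal{S}^{(i)}$ for each fixed $i$, and then using the symmetry $j\in\mathcal{S}^{(i)}\Leftrightarrow i\in\mathcal{S}^{(j)}$ together with $|\mathcal{S}^{(i)}|\le M$, both of the resulting double sums are bounded by $\frac{M}{2}\sum_{i=1}^N |u_i|_{a(\Omega'_i)}^2$, and they add up to the claimed $M\sum_{i=1}^N|u_i|_{a(\Omega'_i)}^2$.

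Combining the two steps and using $M\ge 1$ to replace the factor $2$ in front of $|u_0|_{a(\Omega)}^2$ by $2M$ yields exactly \eqref{max-proof2}. The main obstacle is the finite-overlap estimate, and within it the only substantive point is justifying that the cross terms $a(\Pi_i u_i,\Pi_j u_j)_{\Omega}$ vanish for non-overlapping subdomains; this rests on the locality of $a(\cdot,\cdot)$ (equivalently, on reading $a(u_i,u_j)_{\Omega}$ as a contribution supported on $supp(u_i)\cap supp(u_j)$), which is precisely the property emphasized in the sentence preceding the lemma. Everything else is routine Cauchy--Schwarz/Young bookkeeping combined with the overlap count $M$, so I do not expect further difficulty.
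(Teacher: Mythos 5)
Your proof is correct and follows exactly the route the paper intends: the paper only sketches this lemma with the remark that it follows from sesquilinearity and the support property of the $u_i$, and your argument (splitting off the coarse term via $|x+y|_a^2\le 2|x|_a^2+2|y|_a^2$, then using locality, Cauchy--Schwarz, Young's inequality, and the overlap count $M$ on the cross terms) is the standard way to fill in that sketch. The bookkeeping, including absorbing the factor $2$ into $2M$ via $M\ge 1$, is sound.
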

%

For a given $u\in V_h$, define $u_0\in V_0$ as
\begin{eqnarray}\label{u0-def-1}
a(u_0,v) = a(u,v),~\forall v\in V_0, ~~namely,~~ u-u_0\in V_0^\perp=\tilde{V}.
\end{eqnarray}

We introduce an interpolation operator: $I_h: L^2(\Omega)\rightarrow V_h$ which satisfies the following stability assumptions.

\begin{assumption}\label{assump-2}
For any $u\in V(\Omega)$, there exist constants $C_{I_a}$ and $C_{I_s}$ satisfying
\begin{eqnarray}\label{Ih-bound}
a(I_h u, I_h u)_{\tau}&\le&  C_{I_a}a(u, u)_{\tau},~~\forall \tau\in \mathcal{T}_h,\\
\Sigma_{l=1}^N \|I_h u\|_{s_l}^2
&\le& C_{I_s} \Sigma_{l=1}^N \|u\|_{s_l}^2.\label{Ih-bound-s}
\end{eqnarray}
\end{assumption}

Set $C_I = \max\{C_{I_a},C_{I_s}\}$.
For $\theta_i$ defined by \eqref{theta-def}, let
\begin{eqnarray}\label{ui-def}
u_i=I_h(\theta_{i}(u-u_0)) \in V_h(\Omega'_i),~ i=1,\cdots, N.
\end{eqnarray}

Using the definition of $I_h$ and $\{\theta_i\}_{i=1}^N$, and noting that $\Pi_i(i=0,\cdots,N)$ are all identical operators, we have the following decomposition
\begin{eqnarray}\label{decomp-u}
u=\Sigma_{i=0}^N\Pi_iu_i=\Sigma_{i=0}^N u_i,
\end{eqnarray}
where $u_0$ and $u_i(i=1,\cdots,N)$ are defined by \eqref{u0-def-1} and \eqref{ui-def} respectively.

Noting that $u-u_0\in \tilde{V}$, $\{\phi_j^{(i)}\}_{j=1}^{n_i}$ are the basis functions of $V_h(\bar{\Omega}_i)$, using \eqref{u0-def-1}, \eqref{delta-ij} and \eqref{GEP-general}, we can easily prove the following two properties of $u-u_0$.
\begin{lemma}\label{lemma-1}
For a given $u\in V_h$ and $u_0$ is defined by \eqref{u0-def-1}, we have
\begin{eqnarray}\label{w-omegai}
(u-u_0)|_{\bar{\Omega}_i} =\Sigma_{j=l_i+1}^{n_i}C_{\Delta,j} \phi_j^{(i)},
\end{eqnarray}
where $C_{\Delta,j} \in \mathbb{C}(j=l_i+1,\cdots,n_i)$ are all constants.
\end{lemma}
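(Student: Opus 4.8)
The plan is to expand the restriction $(u-u_0)|_{\bar{\Omega}_i}$ in the eigenbasis and then show that the coefficients attached to the auxiliary eigenfunctions $\phi_1^{(i)},\dots,\phi_{l_i}^{(i)}$ vanish. By Lemma~\ref{deltaij-a-s} the family $\{\phi_j^{(i)}\}_{j=1}^{n_i}$ satisfies the biorthogonality relations \eqref{delta-ij}, and since $n_i=dim(V_h(\bar{\Omega}_i))$ these $n_i$ functions are linearly independent, hence a basis of $V_h(\bar{\Omega}_i)$. Therefore I can write
\begin{eqnarray*}
(u-u_0)|_{\bar{\Omega}_i} = \Sigma_{j=1}^{n_i} C_{\Delta,j}\phi_j^{(i)}
\end{eqnarray*}
with uniquely determined constants $C_{\Delta,j}\in\mathbb{C}$, so the task reduces to proving $C_{\Delta,k}=0$ for $k=1,\dots,l_i$.

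To identify each coefficient I would test against the $s_i$-form. Applying $s_i(\cdot,\phi_k^{(i)})$ to the expansion and using the biorthogonality in \eqref{delta-ij}, namely $s_i(\phi_j^{(i)},\phi_k^{(i)})=0$ for $j\ne k$ and $s_i(\phi_k^{(i)},\phi_k^{(i)})=1$, collapses the sum to a single term:
\begin{eqnarray*}
s_i\bigl((u-u_0)|_{\bar{\Omega}_i},\phi_k^{(i)}\bigr) = C_{\Delta,k},~~k=1,\dots,n_i.
\end{eqnarray*}
Thus the coefficients are precisely the $s_i$-moments of $u-u_0$ against the eigenfunctions, which is the identity that carries the whole argument.

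Next I would invoke the defining property of $u_0$. By \eqref{u0-def-1} we have $u-u_0\in V_0^{\perp}=\tilde{V}$, and by the definition \eqref{tilde-V} of $\tilde{V}$ together with $V_{aux}^{(i)}=span\{\phi_1^{(i)},\dots,\phi_{l_i}^{(i)}\}$ from \eqref{vauxi-def}, this means $s_i(u-u_0,w)=0$ for every $w\in V_{aux}^{(i)}$. Taking $w=\phi_k^{(i)}$ for $k=1,\dots,l_i$ and combining with the moment identity above yields $C_{\Delta,k}=0$ for those indices, which leaves exactly the tail sum $\Sigma_{j=l_i+1}^{n_i}C_{\Delta,j}\phi_j^{(i)}$ asserted in \eqref{w-omegai}.

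I do not expect any genuine obstacle: the result is a one-line consequence of biorthogonality plus the orthogonality encoded in $\tilde{V}$. The only point requiring a little care is notational, namely reading $s_i(u-u_0,\cdot)$ as acting on the restriction $(u-u_0)|_{\bar{\Omega}_i}$, since $s_i$ is defined only on $V_h(\bar{\Omega}_i)$; once this identification is fixed the argument is immediate, and the generalized eigenproblem \eqref{GEP-general} enters only implicitly, through the existence of the biorthogonal eigenbasis supplied by Lemma~\ref{deltaij-a-s}.
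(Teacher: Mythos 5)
Your proof is correct and follows essentially the same route as the paper: the paper's (sketched) argument likewise expands $(u-u_0)|_{\bar{\Omega}_i}$ in the $s_i$-orthonormal eigenbasis guaranteed by Lemma~\ref{deltaij-a-s} and uses $u-u_0\in\tilde{V}$ from \eqref{u0-def-1} together with \eqref{delta-ij} to annihilate the coefficients of $\phi_1^{(i)},\dots,\phi_{l_i}^{(i)}$. Your explicit identification of the coefficients as $s_i$-moments and the remark on restricting to $\bar{\Omega}_i$ simply spell out details the paper leaves implicit.
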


%
%

\begin{lemma}\label{lemma-2}
For a given $u\in V_h$ and $u_0\in V_0$ is defined by \eqref{u0-def-1}, we have
\begin{equation}\label{CondA-min1-real}
   s_i(u-u_0,u-u_0) \leq \Lambda a_i(u-u_0,u-u_0).
 \end{equation}
\end{lemma}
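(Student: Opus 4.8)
The plan is to expand $u-u_0$ restricted to $\bar{\Omega}_i$ in the eigenbasis $\{\phi_j^{(i)}\}$ and exploit the fact that, by Lemma \ref{lemma-1}, only the modes associated with the \emph{small} eigenvalues survive. Concretely, I would start from $(u-u_0)|_{\bar{\Omega}_i}=\Sigma_{j=l_i+1}^{n_i}C_{\Delta,j}\phi_j^{(i)}$ and use the biorthogonality relations of Lemma \ref{deltaij-a-s} to diagonalize both quadratic forms simultaneously, so that the whole estimate reduces to a termwise comparison of eigenvalues.

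First I would evaluate each side in terms of the coefficients $C_{\Delta,j}$. Since $s_i(\phi_j^{(i)},\phi_l^{(i)})=0$ for $j\ne l$ and $s_i(\phi_j^{(i)},\phi_j^{(i)})=1$, the off-diagonal contributions in the double sum cancel and $s_i(u-u_0,u-u_0)=\Sigma_{j=l_i+1}^{n_i}|C_{\Delta,j}|^2$; likewise $a_i(\phi_j^{(i)},\phi_l^{(i)})=0$ for $j\ne l$ gives $a_i(u-u_0,u-u_0)=\Sigma_{j=l_i+1}^{n_i}|C_{\Delta,j}|^2\,a_i(\phi_j^{(i)},\phi_j^{(i)})$. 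The key identity is then obtained by testing the generalized eigenproblem \eqref{GEP-general} with $w=\phi_j^{(i)}$: combined with the normalization $s_i(\phi_j^{(i)},\phi_j^{(i)})=1$ this yields $a_i(\phi_j^{(i)},\phi_j^{(i)})=1/\lambda_j^{(i)}$. Because every surviving index satisfies $j\ge l_i+1$, the ordering \eqref{lambda-order1} forces $\lambda_j^{(i)}<\Lambda$, hence $1/\lambda_j^{(i)}>1/\Lambda$. Substituting and factoring out $1/\Lambda$ gives $a_i(u-u_0,u-u_0)\ge\frac{1}{\Lambda}\Sigma_{j=l_i+1}^{n_i}|C_{\Delta,j}|^2=\frac{1}{\Lambda}s_i(u-u_0,u-u_0)$, which is exactly \eqref{CondA-min1-real} after rearrangement.

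There is essentially no hard analytic obstacle: all the content is pre-packaged in Lemma \ref{lemma-1}, which confines the expansion to the tail modes, and in Lemma \ref{deltaij-a-s}, which supplies the simultaneous $s_i$/$a_i$ diagonalization. The only point that needs a little care is the bookkeeping in the complex sesquilinear setting, namely verifying that the cross terms in the double sums genuinely vanish under the Hermitian forms $s_i$ and $a_i$ (so that both forms collapse to $|C_{\Delta,j}|^2$-weighted diagonals rather than to sums with conjugation-dependent cross terms). Once that reduction is justified, the spectral gap $\lambda_j^{(i)}<\Lambda$ does all the remaining work, and the inequality follows immediately.
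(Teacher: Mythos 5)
Your proof is correct and follows essentially the same route the paper intends: expand $(u-u_0)|_{\bar{\Omega}_i}$ in the tail eigenmodes via Lemma \ref{lemma-1}, use the simultaneous $s_i$/$a_i$ orthogonality of \eqref{delta-ij} to diagonalize both forms, and invoke \eqref{GEP-general} together with the ordering \eqref{lambda-order1} to compare termwise. The paper leaves this as an ``easily proved'' consequence of \eqref{u0-def-1}, \eqref{delta-ij} and \eqref{GEP-general}, and your argument supplies exactly those missing details, including the correct handling of the sesquilinear cross terms.
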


With the preparations above, the following stability lemma about decomposition \eqref{decomp-u} comes naturally.

\begin{lemma}\label{theorem-nonecnomic}
Assume that the ASSUMPTION \ref{assump-1-strong} and \ref{assump-2} both hold, then for a given $u\in V_h$, the decomposition defined by \eqref{decomp-u} is stable, i.e.
\begin{eqnarray}\label{CondA2-z}
\Sigma_{i=0}^N|u_i|_{a(\Omega'_i)}^2 \le C_{I_a}\tilde{C}M(1+\Lambda) |u|^2_{a(\Omega)},
\end{eqnarray}
where $C_{I_a}$ and $\tilde{C}$ are determined by \eqref{Ih-bound} and \eqref{inequality-s-a-strong}, respectively.
\end{lemma}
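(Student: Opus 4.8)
The plan is to bound each term in the sum $\sum_{i=0}^N |u_i|_{a(\Omega'_i)}^2$ separately, treating the coarse component $u_0$ and the local components $u_1,\dots,u_N$ in two stages, and then combine them. First I would handle the coarse term $|u_0|_{a(\Omega)}^2$. By the definition \eqref{u0-def-1}, $u_0$ is the $a(\cdot,\cdot)$-orthogonal projection of $u$ onto $V_0$, so immediately $|u_0|_{a(\Omega)}^2 = a(u_0,u) \le |u_0|_{a(\Omega)}|u|_{a(\Omega)}$, giving $|u_0|_{a(\Omega)}^2 \le |u|_{a(\Omega)}^2$. Equivalently, since $u - u_0 \in \tilde V = V_0^\perp$, we have $|u|_{a(\Omega)}^2 = |u_0|_{a(\Omega)}^2 + |u-u_0|_{a(\Omega)}^2$, which controls both $|u_0|_{a(\Omega)}^2$ and $|u-u_0|_{a(\Omega)}^2$ by $|u|_{a(\Omega)}^2$ at once; I expect to use the latter form.

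Next I would bound the local terms $\sum_{i=1}^N |u_i|_{a(\Omega'_i)}^2$ where $u_i = I_h(\theta_i(u-u_0))$. The natural chain is: apply the interpolation stability \eqref{Ih-bound} (summed over the elements $\tau$ of $\Omega'_i$) to pass from $|I_h(\theta_i(u-u_0))|_{a(\Omega'_i)}^2$ to $C_{I_a}\,a(\theta_i(u-u_0),\theta_i(u-u_0))_{\Omega'_i}$; then apply the aggregated form \eqref{inequality-s-a} of ASSUMPTION \ref{assump-1-strong} to bound $\sum_{i=1}^N a(\theta_i(u-u_0),\theta_i(u-u_0))_{\Omega'_i}$ by $\tilde C M \sum_{i=1}^N\bigl(a_i(u-u_0,u-u_0)+s_i(u-u_0,u-u_0)\bigr)$. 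At this point the $s_i$-terms must be converted back into $a_i$-terms, which is exactly what Lemma \ref{lemma-2} provides: $s_i(u-u_0,u-u_0)\le \Lambda\, a_i(u-u_0,u-u_0)$. Substituting this yields $\sum_{i=1}^N |u_i|_{a(\Omega'_i)}^2 \le C_{I_a}\tilde C M (1+\Lambda)\sum_{i=1}^N a_i(u-u_0,u-u_0)$, and since the $a_i$ are the restrictions of $a$ to the nonoverlapping $\bar\Omega_i$, the sum $\sum_{i=1}^N a_i(u-u_0,u-u_0)$ telescopes to $|u-u_0|_{a(\Omega)}^2$.

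Finally I would assemble the pieces. Adding the coarse term, $\sum_{i=0}^N |u_i|_{a(\Omega'_i)}^2 \le |u_0|_{a(\Omega)}^2 + C_{I_a}\tilde C M(1+\Lambda)\,|u-u_0|_{a(\Omega)}^2$, and using $|u_0|_{a(\Omega)}^2 + |u-u_0|_{a(\Omega)}^2 = |u|_{a(\Omega)}^2$ together with $C_{I_a}\tilde C M(1+\Lambda)\ge 1$, both contributions are absorbed into the single constant $C_{I_a}\tilde C M(1+\Lambda)$, delivering \eqref{CondA2-z}.

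The step I expect to be the main obstacle is the application of ASSUMPTION \ref{assump-1-strong} in its aggregated form \eqref{inequality-s-a}: one must make sure the interpolation bound is applied element-by-element over $\Omega'_i$ before invoking \eqref{inequality-s-a}, and that the overlap multiplicity (captured by $M$) is correctly accounted for when summing the cut-off contributions over overlapping subdomains. The conversion via Lemma \ref{lemma-2} is clean, but it is essential that the $s_i$-energy appearing after \eqref{inequality-s-a} is evaluated on $u-u_0$ (which lies in $\tilde V$) and not on $u$ itself, since Lemma \ref{lemma-2} only holds for the projected function; keeping this bookkeeping straight, rather than any deep inequality, is where care is needed.
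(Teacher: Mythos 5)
Your proof is correct and follows essentially the same route as the paper's: interpolation stability \eqref{Ih-bound}, the aggregated form \eqref{inequality-s-a} of Assumption \ref{assump-1-strong}, Lemma \ref{lemma-2} to convert the $s_i$-terms back to $a_i$-terms, and the $a(\cdot,\cdot)$-orthogonality of $u_0$ and $u-u_0$. Your final assembly, using the Pythagorean identity $|u_0|_{a(\Omega)}^2+|u-u_0|_{a(\Omega)}^2=|u|_{a(\Omega)}^2$ together with $C_{I_a}\tilde{C}M(1+\Lambda)\ge 1$ to absorb both contributions into the single constant, is in fact slightly tidier than the paper's closing step, which merely notes $a(u_0,u_0)\le a(u,u)$.
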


\begin{proof}
It suffices to prove that
\begin{eqnarray}\label{lambda-min-proof}
\Sigma_{i=1}^N a(u_i,u_i)_{\Omega'_{i}}+a(u_0,u_0)\le C_{I_a}\tilde{C}M(1+\Lambda) a(u,u).
\end{eqnarray}

In fact, using \eqref{ui-def}, \eqref{Ih-bound}, \eqref{inequality-s-a}, \eqref{CondA-min1-real} and \eqref{u0-def-1}, we can obtain
\begin{eqnarray*}\nonumber
    \Sigma_{i=1}^N a(u_i,u_i)_{\Omega'_{i}}
    &\le& C_{I_a}\Sigma_{i=1}^Na(\theta_i(u-u_0),\theta_i(u-u_0))_{\Omega'_{i}} \\\nonumber
    &\le& C_{I_a}\tilde{C}M(\Sigma_{i=1}^Na_i(u-u_0, u-u_0)+\Sigma_{i=1}^Ns_i(u-u_0,u-u_0)) \\\label{proof-min-1}
    &\le& C_{I_a}\tilde{C}M(\Sigma_{i=1}^Na_i(u-u_0,u-u_0)+\Lambda \Sigma_{i=1}^Na_i(u-u_0,u-u_0)) \\\nonumber
    &\le&C_{I_a}\tilde{C}M(1+\Lambda) a(u,u),
\end{eqnarray*}
from this and noting $a(u_0,u_0)\le a(u,u)$, which finish the proof of \eqref{lambda-min-proof}.
\end{proof}

Combining LEMMA \ref{theorem-1}, LEMMA \ref{theorem-nonecnomic} and the theorem 2.2 in \cite{HX2007}, we can obtain
\begin{theorem}\label{theorem-general}
If the ASSUMPTION \ref{assump-1-strong} and \ref{assump-2} both hold, for the preconditioner $B^{-1}$ defined by \eqref{B-def-new}, we have
\begin{eqnarray}\label{cond-general}
\kappa(B^{-1}A_h)\le C,
\end{eqnarray}
where the positive constant $C$ depends only on $\Lambda$ and $M$.
\end{theorem}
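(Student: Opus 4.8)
The plan is to invoke the abstract additive Schwarz framework (theorem 2.2 in \cite{HX2007}) applied to the subspace splitting $V_h = \Pi_0 V_0 + \sum_{i=1}^N \Pi_i V_h(\Omega'_i)$, with the local solvers $A_j$ and $A_0$ given by \eqref{tildeA-def} and \eqref{tildeA0-def} and the preconditioner $B^{-1}$ of \eqref{B-def-new}. That framework reduces the estimate to verifying exactly two properties: a \emph{stable decomposition} bound (which controls $\lambda_{\min}(B^{-1}A_h)$) and a \emph{strengthened Cauchy--Schwarz}/interaction bound (which controls $\lambda_{\max}(B^{-1}A_h)$). Both have essentially been prepared in the preceding lemmas, so the theorem follows by assembling them and reading off the constant.

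First I would record the stable decomposition. For a given $u\in V_h$, take the coarse component $u_0\in V_0$ from \eqref{u0-def-1} and the local components $u_i = I_h(\theta_i(u-u_0))$ from \eqref{ui-def}; by \eqref{decomp-u} these satisfy $u=\sum_{i=0}^N \Pi_i u_i$. LEMMA \ref{theorem-nonecnomic}, which rests on ASSUMPTION \ref{assump-1-strong} and ASSUMPTION \ref{assump-2} (together with LEMMA \ref{lemma-1} and LEMMA \ref{lemma-2}), then yields
\[
\sum_{i=0}^N |u_i|_{a(\Omega'_i)}^2 \le C_0^2\, |u|_{a(\Omega)}^2, \qquad C_0^2 := C_{I_a}\tilde{C}M(1+\Lambda).
\]
In the language of the framework this is precisely the hypothesis that every $u$ admits an admissible splitting with splitting constant $C_0^2$, which gives the lower bound $\lambda_{\min}(B^{-1}A_h)\ge C_0^{-2}$.

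Next I would record the upper bound. LEMMA \ref{theorem-1} supplies, for \emph{arbitrary} $u_0\in V_0$ and $u_i\in V_h(\Omega'_i)$, the interaction inequality
\[
\Bigl| \sum_{i=1}^N \Pi_i u_i + \Pi_0 u_0 \Bigr|_{a(\Omega)}^2 \le 2M \Bigl( \sum_{i=1}^N |u_i|_{a(\Omega'_i)}^2 + |u_0|_{a(\Omega)}^2 \Bigr),
\]
which is exactly the continuity/boundedness hypothesis of the framework with interaction constant $\omega := 2M$, bounding $\lambda_{\max}(B^{-1}A_h)\le 2M$. Feeding both inequalities into theorem 2.2 of \cite{HX2007} produces
\[
\kappa(B^{-1}A_h) \le C_0^2\, \omega = 2 C_{I_a}\tilde{C} M^2 (1+\Lambda).
\]
Since $C_{I_a}$ and $\tilde{C}$ are the fixed constants furnished by ASSUMPTION \ref{assump-2} and ASSUMPTION \ref{assump-1-strong}, the right-hand side depends only on $\Lambda$ and $M$, which is the claim.

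The routine estimates are already done, so the one place that demands care is matching the hypotheses of the cited abstract theorem to the present configuration. Specifically, I must confirm that the local forms driving $A_j$ and $A_0$ in \eqref{B-def-new} coincide with the restrictions $a(\cdot,\cdot)_{\Omega'_i}$ and $a(\cdot,\cdot)$ used in the two lemmas, that the $\Pi_i$ are genuine identical inclusions so that no extra constants sneak in through the prolongations, and that the coarse space $V_0$ is handled on the same footing as the overlapping subspaces. Once this bookkeeping is in place no further estimation is needed: the entire analytic difficulty has been pushed into LEMMA \ref{theorem-nonecnomic}, whose stable-splitting proof is where the spectral threshold $\Lambda$ (via LEMMA \ref{lemma-2}) and the partition-of-unity stability (via ASSUMPTION \ref{assump-1-strong}) actually enter.
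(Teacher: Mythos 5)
Your proposal is correct and follows exactly the paper's argument: the paper proves this theorem in one line by combining LEMMA \ref{theorem-1} (the interaction bound with constant $2M$), LEMMA \ref{theorem-nonecnomic} (the stable decomposition with constant $C_{I_a}\tilde{C}M(1+\Lambda)$), and theorem 2.2 of \cite{HX2007}, which is precisely your assembly. Your explicit constant $2C_{I_a}\tilde{C}M^2(1+\Lambda)$ and the bookkeeping about identical prolongations and matching local forms are consistent with (and slightly more detailed than) what the paper records.
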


We note that the TL-OS precontioner defined by \eqref{B-def-new} needs high computational cost since the coarse basis determined by \eqref{psi-sub2-new} (or \eqref{psi-sub2}) require the solution on entire $\Omega$. In next subsection, two economical preconditioners with lower computational cost and better parallel scalability will be discussed.
It is worth pointing out that the variational form \eqref{psi-sub2-new} will be used to calculate economical coarse basis because of their lower computational complexity.


\subsection{Two economical TL-OS preconditioners}\label{framework-economical}

For a given nonoverlapping subdomain $\Omega_i$, we introduce a region $\Omega_{k,H}^{(i)}\subset \Omega$ by extending $\Omega_i$ with $k$($k$ is an integer) layers of neighbouring subdomains, see Fig. \ref{coarse-space-grid}, the green part shows some $\Omega_{k,H}^{(i)}$ with $k=1$ in two dimension.




\begin{figure}[h]
\centering
\subfigure[]{\label{coarse-space-grid}
\includegraphics[width=1.0in]{./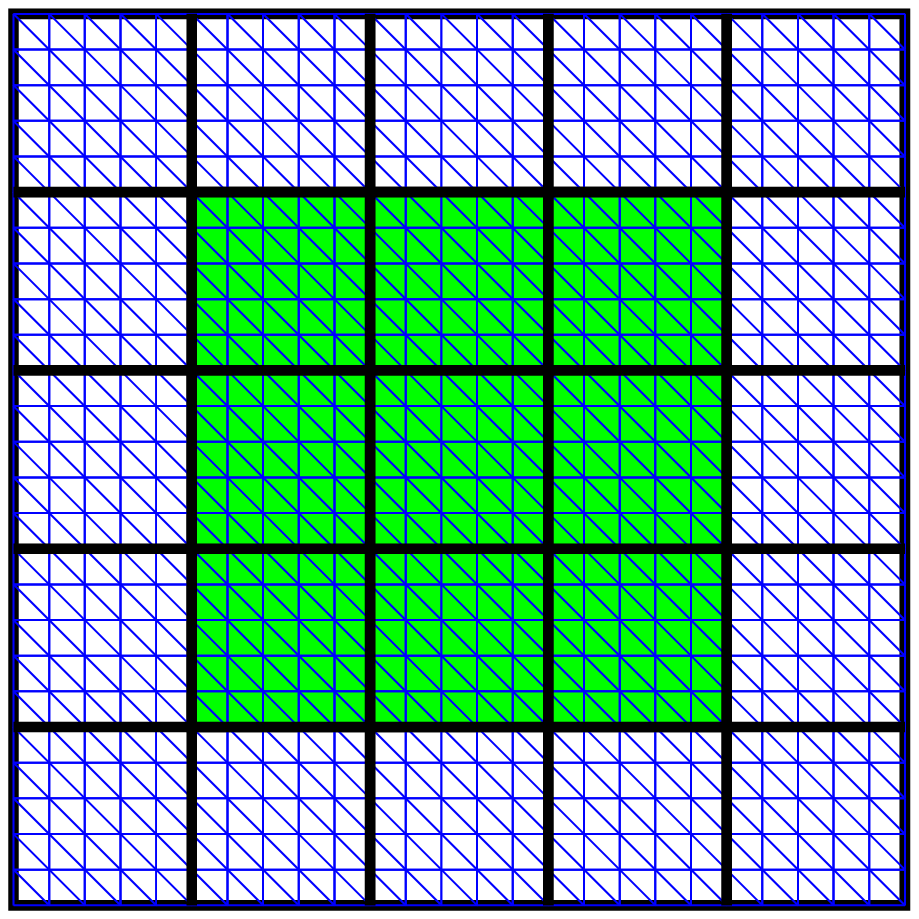}}
\hspace{0.01in}
\subfigure[]{\label{subdomain-interior}
\includegraphics[width=1.0in]{./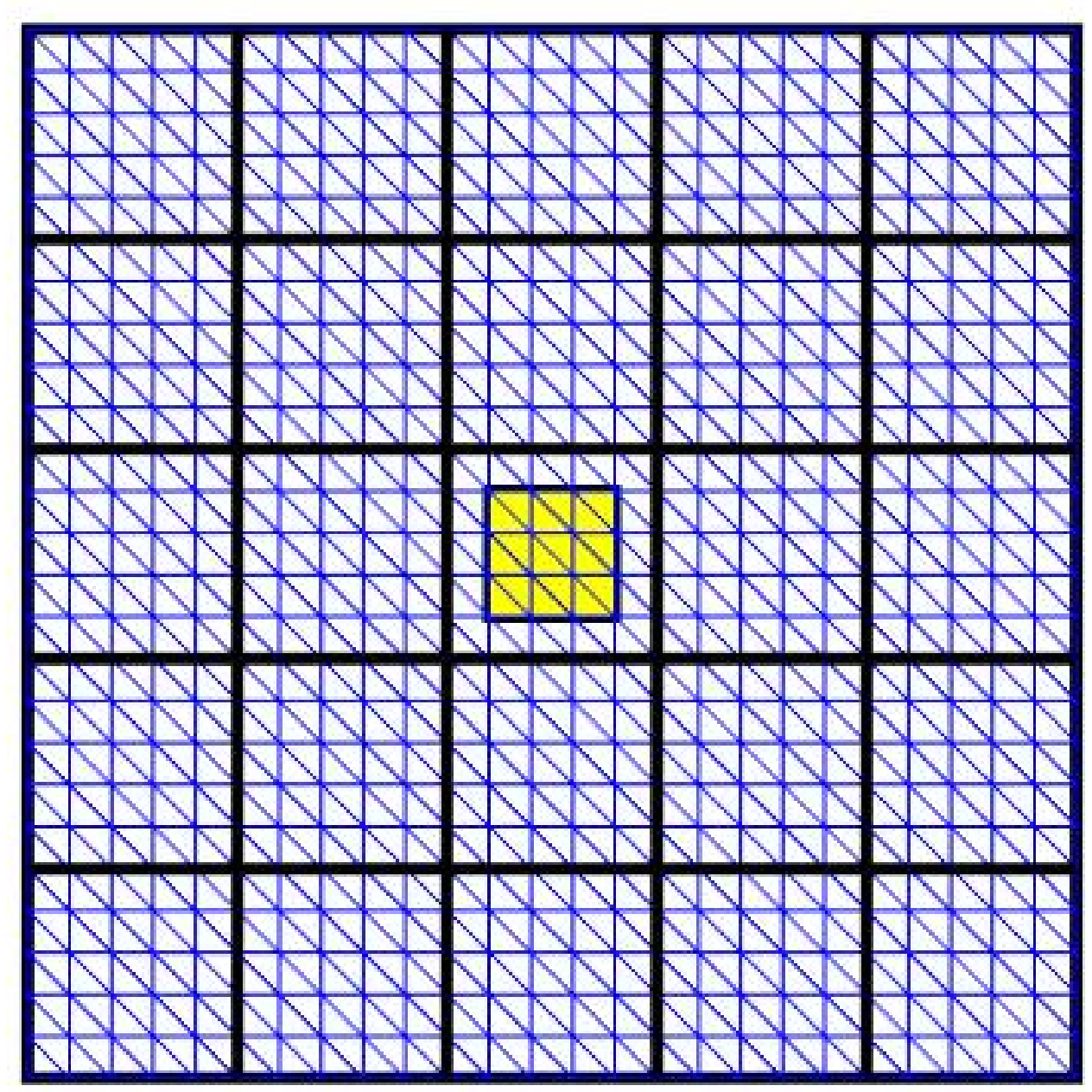}}
\hspace{0.01in}
\subfigure[]{ \label{1d-theati}
\includegraphics[width=1.0in]{./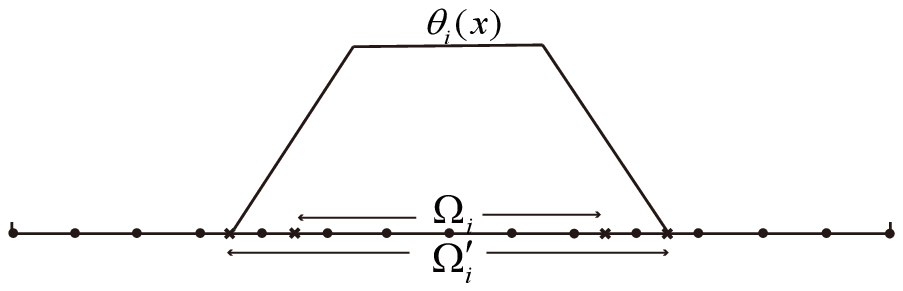}}
\hspace{0.01in}
{\caption{(a)~The green domain is $\Omega_{k,H}^{(i)}$ with $k=1$, (b)~The yellow domain is $\Omega_{i,INT}$,(c) ~$\theta_i(x)$}
\label{Fig:submesh-2-new}}
\end{figure}


%

Define the following finite element space on $\Omega_{k,H}^{(i)}$
$$V_{k,H}^{(i)}=\{v\in V_h | supp(v)\subseteq \Omega_{k,H}^{(i)}\}.$$

Based on this space and similar to \eqref{psi-sub2-new} and \eqref{psi-sub2}, we can give the approximations of $\psi_j^{(i)}$ and $\bar{\psi}_j^{(i)}$ on $\Omega_{k,H}^{(i)}$: find $\zeta_{j,k}^{(i)} \in  V_{k,H}^{(i)}$($\zeta=\psi, \bar{\psi}$) such that
\begin{eqnarray}\label{psi-ec-I}
a(\psi_{j,k}^{(i)},v) &=&s_i(\phi_j^{(i)},\pi_iv),~~\forall v\in V_{k,H}^{(i)},\\
b(\bar{\psi}_{j,k}^{(i)},v) &=& s_i(\phi_j^{(i)},\pi_iv),~~\forall v\in V_{k,H}^{(i)},\label{psi-ec-II}
\end{eqnarray}
where $b(\cdot,\cdot)$ is defined by \eqref{b}, $s_i(\cdot,\cdot)$ is determined by the inequality \eqref{inequality-s-a-strong}, $\phi_j^{(i)}$ is the basis function of \eqref{vauxi-def}.

Then we can define the following two economical coarse spaces
\begin{eqnarray*}
V^{k,\zeta}_{0}=span\{\zeta_{j,k}^{(i)},~~j=1,\cdots,l_i,~~i=1,\cdots,N\},~~\zeta=\psi, \bar{\psi}.
\end{eqnarray*}

Similar to \eqref{B-def-new}, we obtain two economical TL-OS preconditioners
\begin{eqnarray*}
B^{-1}_{k,\zeta}= \Sigma_{j=1}^N \Pi_j A_j^{-1}\Pi^*_j+\Pi^{k,\zeta}_{0} (A^{k,\zeta}_{0})^{-1}(\Pi^{k,\zeta}_{0})^*,~~\zeta=\psi, \bar{\psi},
\end{eqnarray*}
where $\Pi^{k,\zeta}_{0}: V^{k,\zeta}_{0}\rightarrow V_h$ is an identical operator and coarse space operator $A^{k,\zeta}_{0}: V^{k,\zeta}_{0}\rightarrow V^{k,\zeta}_{0}$ satisfies
$(A^{k,\zeta}_{0}u,v)=a(u,v),~~\forall u,v\in V^{k,\zeta}_{0}$.

Next we will discuss the estimate of condition number $\kappa(B^{-1}_{k,\zeta} A_h)$. Firstly, the following lemma can be obtained similar to LEMMA \ref{theorem-1}.
\begin{lemma}\label{theorem-ecnomic-1}
For any $u^{k,\zeta}_{0}\in V^{k,\zeta}_{0}, u_i \in V_h(\Omega'_i)(i=1,\cdots,N)$, we have
\begin{eqnarray}\label{max-ecnomic-proof2}
|\Sigma_{i=1}^{N}\Pi_iu_i+\Pi^{k,\zeta}_{0} u^{k,\zeta}_{0}|_{a(\Omega)}^2 \le 2M
(\Sigma_{i=1}^{N}| u_i|_{a(\Omega'_i)}^2+| u^{k,\zeta}_{0}|_{a(\Omega)}^2).
\end{eqnarray}
\end{lemma}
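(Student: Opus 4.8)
The plan is to mimic the proof of LEMMA \ref{theorem-1}, since the statement is structurally identical: we are bounding the $a(\Omega)$-norm of a sum of locally-supported functions plus one coarse-space function. The only change is that the coarse function $u_0 \in V_0$ is replaced by $u^{k,\zeta}_{0} \in V^{k,\zeta}_{0}$, and the coarse prolongation $\Pi_0$ is replaced by $\Pi^{k,\zeta}_{0}$. Since both $\Pi_0$ and $\Pi^{k,\zeta}_{0}$ are identical (embedding) operators, and the argument in LEMMA \ref{theorem-1} never used any special property of $V_0$ beyond the fact that its elements are genuine functions in $V_h$ (supported on all of $\Omega$), the same reasoning should transfer verbatim. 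I would therefore set $v := \Sigma_{i=1}^{N}\Pi_i u_i + \Pi^{k,\zeta}_{0} u^{k,\zeta}_{0}$ and expand $|v|_{a(\Omega)}^2 = a(v,v)$ using sesquilinearity.

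First I would expand the square into the $(N+1)$ diagonal terms $a(\Pi_i u_i, \Pi_i u_i)$ (with the convention that the index $0$ term uses $\Pi^{k,\zeta}_{0} u^{k,\zeta}_{0}$) plus all the cross terms. Next I would exploit the support structure: each $u_i$ is supported on $\Omega'_i$, so a cross term $a(\Pi_i u_i, \Pi_j u_j)$ vanishes unless $\Omega'_i \cap \Omega'_j \neq \emptyset$, i.e. unless $j \in \mathcal{S}^{(i)}$. The coarse term is supported on all of $\Omega$, so it may interact with every $u_i$; this is handled by treating the coarse contribution as one additional block and invoking the bound $M$ on the maximal number of overlapping subdomains from \eqref{S-i-M}. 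The key estimate is a Cauchy--Schwarz inequality on each surviving cross term, $|a(\Pi_i u_i, \Pi_j u_j)| \le |u_i|_{a(\Omega'_i \cap \Omega'_j)}|u_j|_{a(\Omega'_i \cap \Omega'_j)} \le \tfrac{1}{2}(|u_i|_{a(\Omega'_i)}^2 + |u_j|_{a(\Omega'_j)}^2)$, combined with a counting argument showing each index $i$ appears in at most $M$ overlapping pairs; summing produces the factor $2M$.

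The step I expect to require the most care is the bookkeeping for the coarse term, because $\Pi^{k,\zeta}_{0} u^{k,\zeta}_{0}$ has full support and so does not enjoy the sparse-overlap structure that the local terms $u_i$ do. The clean way to handle this is to observe that the coarse space here is a single aggregated block rather than $N$ separately-supported pieces, so it contributes one "color" to the overlap count and the factor $2M$ absorbs it exactly as in LEMMA \ref{theorem-1}; concretely, I would keep the coarse term as the $i=0$ summand throughout and verify that the counting bound $M$ (the maximum number of adjacent subregions) still governs the number of nonzero cross interactions involving it. Since this is precisely the mechanism already used to prove LEMMA \ref{theorem-1}, the honest statement is that the lemma follows by an argument identical to that of LEMMA \ref{theorem-1}, with $V_0, \Pi_0, u_0$ replaced by $V^{k,\zeta}_{0}, \Pi^{k,\zeta}_{0}, u^{k,\zeta}_{0}$; no new idea is needed, which is exactly why the paper phrases it as "obtained similar to LEMMA \ref{theorem-1}."
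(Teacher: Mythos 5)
Your proof is correct and takes essentially the same approach as the paper: the paper gives no separate argument for this lemma, stating only that it ``can be obtained similar to LEMMA \ref{theorem-1},'' which is precisely your strategy of transplanting that proof with $V_0,\Pi_0,u_0$ replaced by $V^{k,\zeta}_{0},\Pi^{k,\zeta}_{0},u^{k,\zeta}_{0}$. Your key observation---that the argument for LEMMA \ref{theorem-1} (sesquilinearity, the support structure of the $u_i$, Cauchy--Schwarz on overlapping pairs, the counting bound $M$ from \eqref{S-i-M}, with the coarse function treated as a single full-support block) uses no property of the coarse space beyond its elements being functions of $V_h$ embedded by an identical operator---is exactly why the transfer is verbatim.
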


Noting that both $\{\psi_j^{(i)}\}$ and $\{\bar{\psi}_j^{(i)}\}$ are basis of $V_0$, therefore the expressions of $u_0\in V_0$ defined by \eqref{u0-def-1} under these two basis can be expressed as
\begin{eqnarray}\label{u0-zeta}
u^{\zeta}_{0} = \Sigma_{i=1}^N\Sigma_{j=1}^{l_i} c_{ij}\zeta_j^{(i)}.
\end{eqnarray}

Using the above coefficients $c_{ij}$, we can define the function
\begin{align}\label{chuji2-1}
u^{k,\zeta}_{0}= \Sigma_{i=1}^N\Sigma_{j=1}^{l_i} c_{ij}\zeta_{j,k}^{(i)}.
\end{align}

For $\{\theta_i\}_{i=1}^N$ defined by \eqref{theta-def}, let
\begin{eqnarray}\label{ui-def-ecnomic}
u^{\zeta}_{i}=I_h(\theta_{i}(u-u^{k,\zeta}_{0})) \in V_h(\Omega'_i),~ i=1,\cdots, N.
\end{eqnarray}
Similar to \eqref{decomp-u}, there exist two kinds of decomposition about a given $u\in V_h$
\begin{eqnarray*}
u=\Sigma_{i=1}^N\Pi_iu^{\zeta}_{i}+\Pi^{k,\zeta}_{0}u^{k,\zeta}_{0}=\Sigma_{i=1}^N u^{\zeta}_{i}+u^{k,\zeta}_{0},~~\zeta=\psi, \bar{\psi}.
\end{eqnarray*}

Using \eqref{inequality-s-a}, \eqref{Ih-bound}, \eqref{CondA-min1-real}, and similar to (5.3) and the part below (5.3) in \cite{WCK2020}, we can prove that $u^{k,\zeta}_{0}$ and $u^{\zeta}_{i}$ separately defined by \eqref{chuji2-1} and \eqref{ui-def-ecnomic} meet
\begin{eqnarray}\label{xingzhi1}
a(u^{k,\zeta}_{0},u^{k,\zeta}_{0}) &\le& 6a(u,u) + 4a(u^{\zeta}_{0}-u^{k,\zeta}_{0},u^{\zeta}_{0}-u^{k,\zeta}_{0}),\\
\Sigma_{i=1}^Na(u^{\zeta}_{i},u^{\zeta}_{i})&\le& 2C_{I_a}\tilde{C}M((1+\Lambda)a(u,u)+ b(u^{\zeta}_{0}-u^{k,\zeta}_{0},u^{\zeta}_{0}-u^{k,\zeta}_{0})).  \label{xingzhi2}
\end{eqnarray}
Then we obtain
\begin{eqnarray}
&&\label{CondA2}\\
&&\Sigma_{i=1}^N|u^{\zeta}_{i}|_{a(\Omega'_i)}^2 + |u^{k,\zeta}_{0}|_{a(\Omega)}^2\le  (6+2C_{I_a}\tilde{C}M(1+\Lambda))|u|_{a(\Omega)}^2+ (4+2C_{I_a}\tilde{C}M)\|w\|_{b(\Omega)}^2,\nonumber
\end{eqnarray}
where
\begin{eqnarray}\label{norm-b}
\|w\|_{b(\Omega)}^2 = b(w, w), ~~w=u^{\zeta}_{0}-u^{k,\zeta}_{0}.
\end{eqnarray}

Denote the functions column vectors
\begin{eqnarray*}
{\bf \zeta}^{(i)}=(\zeta_1^{(i)},\cdots,\zeta_{l_i}^{(i)})^T,~~
{\bf \zeta}^{(i)}_k=(\zeta_{1,k}^{(i)},\cdots,\zeta_{l_i,k}^{(i)})^T,~~
\Phi_{\pi}^{(i)}=(\phi_1^{(i)},\cdots,\phi_{l_i}^{(i)})^T,
\end{eqnarray*}
and for a given $l_i$-dimensional vector $\vec{c}=(c_{i1},\cdots,c_{il_i})\in \mathbb{C}^{l_i}$, we introduce functions
\begin{eqnarray}\label{lemma-0}
w_{\nu} = \vec{c}\nu,~~ \nu = {\bf \zeta}^{(i)},{\bf \zeta}^{(i)}_k,\Phi_{\pi}^{(i)}.
\end{eqnarray}

From \eqref{norm-b},\eqref{u0-zeta}, \eqref{chuji2-1} and \eqref{lemma-0}, we know that
\begin{eqnarray}\label{lemma4-1-1}
w &=&  \Sigma_{i=1}^N \Sigma_{j=1}^{l_i}c_{ij}(\zeta_j^{(i)}-\zeta_{j,k}^{(i)}) := \Sigma_{i=1}^N w_i
\end{eqnarray}
where
\begin{eqnarray}\label{lemma4-1-2}
w_i=\Sigma_{j=1}^{l_i}c_{ij}(\zeta_j^{(i)}-\zeta_{j,k}^{(i)})=w_{{\bf \zeta}^{(i)}}-w_{{\bf \zeta}_k^{(i)}}.
\end{eqnarray}

In order to estimate the right hand side $\|w\|_{b(\Omega)}^2$ of \eqref{CondA2}, functions
\begin{eqnarray}\label{chi-i-k}
\chi_i^k = \Sigma_{\Omega_l\subset \Omega^{(i)}_{k,H}} \theta_l,~~i=1,\cdots,N
\end{eqnarray}
are introduced which associated with $\Omega^{(i)}_{k,H}$, it is easy to know from \eqref{theta-def} that
\begin{eqnarray}\label{chik-1}
1-\chi_i^{k}=0,~~on ~\Omega^{(i)}_{k-1,H}, ~~and ~~1-\chi_i^{k}=0,~~on ~\Omega\backslash\Omega^{(i)}_{k+1,H}.
\end{eqnarray}

For any $u\in V_h$, from \eqref{theta-def} and \eqref{inequality-s-a-strong}, we can derive
\begin{eqnarray}
|\sum\limits_{\Omega_l\subset \mathcal{O}} \theta_lu|_{a(\mathcal{D})}^2
&=&\sum\limits_{\Omega_j\subset \mathcal{D}}|\sum\limits_{\Omega_l\subset \mathcal{O}} \theta_lu|_{a(\Omega_j)}^2
=\sum\limits_{\Omega_j\subset \mathcal{D}}|\sum\limits_{l\in S^{(j)}} \theta_lu|_{a(\Omega_j)}^2\nonumber\\
&\le&\sum\limits_{\Omega_j\subset \mathcal{D}}
\tilde{C}|S^{(j)}|^2(a_j(u, u)+s_j(u,u))\le\tilde{C}M^2\sum\limits_{\Omega_j\subset \mathcal{D}}
(a_j(u, u)+s_j(u,u)), \label{chi-i-k-estimate}
\end{eqnarray}
where the region $\mathcal{R}=\cup_{m\in \mathcal{S}_{\mathcal{R}}}\Omega_m$ and $\mathcal{S}_{\mathcal{R}}\subset \{1,\cdots,N\}$ ($\mathcal{R}=\mathcal{O}, \mathcal{D}$) is any index set.

By the definition \eqref{pi-defsource} of $\pi_l$, and using \eqref{GEP-general}, \eqref{lambda-order1} and \eqref{delta-ij}, we can obtain 
\begin{lemma}\label{lemma-pre}
For any $v\in V_h$, we have
    \begin{eqnarray}\label{lemma2-proof1}
      \|v\|_{s_l}^2\leq 2\Lambda|v|_{a(\Omega_l)}^2+3\|\pi_lv\|_{s_l}^2.
    \end{eqnarray}
\end{lemma}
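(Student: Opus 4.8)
The plan is to diagonalize the three forms appearing in \eqref{lemma2-proof1} in the local generalized eigenbasis and then let the threshold $\Lambda$ control the tail. Since $\{\phi_j^{(l)}\}_{j=1}^{n_l}$ is a basis of $V_h(\bar{\Omega}_l)$ (as already exploited in LEMMA \ref{lemma-1}), I would begin by expanding the restriction $v|_{\bar{\Omega}_l}=\sum_{j=1}^{n_l}c_j\phi_j^{(l)}$ with coefficients $c_j\in\mathbb{C}$. Invoking the biorthogonality \eqref{delta-ij} ($s_l$-orthonormality and $a_l$-orthogonality of the eigenfunctions) together with the identity $a_l(\phi_j^{(l)},\phi_j^{(l)})=1/\lambda_j^{(l)}$, which follows by testing \eqref{GEP-general} against $\phi_j^{(l)}$ and using $s_l(\phi_j^{(l)},\phi_j^{(l)})=1$, the forms become diagonal: $\|v\|_{s_l}^2=\sum_{j=1}^{n_l}|c_j|^2$ and $|v|_{a(\Omega_l)}^2=\sum_{j=1}^{n_l}|c_j|^2/\lambda_j^{(l)}$.

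The one step that genuinely needs care is the explicit identification of $\pi_lv$. Because $\pi_l$ is the $s_l$-projection onto $V_{aux}^{(l)}=span\{\phi_1^{(l)},\cdots,\phi_{l_l}^{(l)}\}$ and the eigenfunctions are $s_l$-orthonormal, testing the defining relation \eqref{pi-defsource} against each $\phi_k^{(l)}$ with $k\le l_l$ forces $\pi_lv=\sum_{j=1}^{l_l}c_j\phi_j^{(l)}$, i.e. $\pi_l v$ is exactly the truncation of the eigenexpansion onto the high-eigenvalue modes. Consequently $\|\pi_lv\|_{s_l}^2=\sum_{j=1}^{l_l}|c_j|^2$, and the remaining mass $\|v\|_{s_l}^2-\|\pi_lv\|_{s_l}^2=\sum_{j=l_l+1}^{n_l}|c_j|^2$ is supported entirely on the low-eigenvalue modes. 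This is the structural fact through which the ordering \eqref{lambda-order1} enters.

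With this in hand the estimate is immediate. For every $j>l_l$ the ordering \eqref{lambda-order1} gives $\lambda_j^{(l)}<\Lambda$, so $|c_j|^2=\lambda_j^{(l)}\,(|c_j|^2/\lambda_j^{(l)})\le \Lambda\,(|c_j|^2/\lambda_j^{(l)})$; summing over the tail and discarding the nonnegative head terms yields $\sum_{j>l_l}|c_j|^2\le \Lambda\sum_{j=1}^{n_l}|c_j|^2/\lambda_j^{(l)}=\Lambda|v|_{a(\Omega_l)}^2$. Adding $\|\pi_lv\|_{s_l}^2$ to both sides then gives $\|v\|_{s_l}^2\le \|\pi_lv\|_{s_l}^2+\Lambda|v|_{a(\Omega_l)}^2$, from which \eqref{lemma2-proof1} follows at once (indeed with room to spare, since the claimed constants $2\Lambda$ and $3$ are weaker). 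I expect no real obstacle here: this is essentially LEMMA \ref{lemma-2} applied to the low-mode part $v-\pi_lv$, and the only place a reader could stumble is in accepting that $\pi_lv$ coincides with the spectral truncation rather than with an $a$-orthogonal projection; I would therefore spell that identification out carefully and treat the looser constants as the harmless by-product of a triangle-inequality phrasing that does not exploit the exact diagonalization.
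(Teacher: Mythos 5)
Your proof is correct and follows exactly the route the paper intends: the paper states this lemma without a written proof, citing precisely the ingredients you use --- the definition \eqref{pi-defsource} of $\pi_l$, the eigenproblem \eqref{GEP-general}, the ordering \eqref{lambda-order1} and the orthogonality relations \eqref{delta-ij} --- and your identification of $\pi_l v$ as the spectral truncation onto the first $l_l$ modes is the intended mechanism. In fact your argument yields the sharper bound $\|v\|_{s_l}^2\le \Lambda|v|_{a(\Omega_l)}^2+\|\pi_lv\|_{s_l}^2$, of which the stated \eqref{lemma2-proof1} is a harmless weakening.
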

%
%
%
%

In the following, we might as well take $\zeta=\bar{\psi}$ as an example to discuss the estimation of $\|w\|_{b(\Omega)}^2$.
It is similar to the proof of lemma 5.5 in \cite{WCK2020}, we can obtain 
\begin{lemma}\label{lemma5}
If both the ASSUMPTION \ref{assump-1-strong} and \ref{assump-2} hold, and $k\geq 2$, then for $w$ and $w_i$ defined by \eqref{lemma4-1-1} and \eqref{lemma4-1-2}, we have
\begin{eqnarray}\label{lemma5-0}
\|w\|_{b(\Omega)}^2 &\leq& \tilde{C}_1\Lambda(2k+1)^d \Sigma_{i=1}^N \|w_i\|_{b(\Omega)}^2,
\end{eqnarray}
\end{lemma}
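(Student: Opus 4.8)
The plan is to exploit the Galerkin orthogonality hidden in the localized problems together with the cut-off functions $\chi_i^{k}$, reducing the global energy $\|w\|_{b(\Omega)}^2$ to a sum of contributions each living on an extended region $\Omega_{k,H}^{(i)}$, and then to close the estimate by the finite-overlap property of these regions. Throughout I would write $g_i=\sum_{j=1}^{l_i}c_{ij}\bar\psi_j^{(i)}$, $\ell_i=\sum_{j=1}^{l_i}c_{ij}\bar\psi_{j,k}^{(i)}$ and $p_i=\sum_{j=1}^{l_i}c_{ij}\phi_j^{(i)}\in V_{aux}^{(i)}$, so that $w_i=g_i-\ell_i$ by \eqref{lemma4-1-2} and $\ell_i\in V_{k,H}^{(i)}$, with $c_{ij}$ as in \eqref{u0-zeta}. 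I treat $\zeta=\bar\psi$, as in the statement.

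First I would record the two structural facts that make locality possible. Since $b(\cdot,\cdot)$ is Hermitian and $\bar\psi_j^{(i)}$ solves \eqref{psi-sub2} against every element of $V_h$, testing against a generic $v\in V_h$ gives $b(v,g_i)=s_i(\pi_i v,p_i)$, a quantity that sees $v$ only through $\pi_iv$, hence only on $\bar\Omega_i$. Because $\ell_i$ is supported in $\overline{\Omega_{k,H}^{(i)}}$, the term $b(v,\ell_i)=a(v,\ell_i)_{\Omega_{k,H}^{(i)}}+\sum_{\Omega_l\subset\Omega_{k,H}^{(i)}}s_l(\pi_lv,\pi_l\ell_i)$ depends on $v$ only on $\Omega_{k,H}^{(i)}$. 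Consequently the functional $v\mapsto b(v,w_i)=s_i(\pi_iv,p_i)-b(v,\ell_i)$ depends on $v$ only through its restriction to $\Omega_{k,H}^{(i)}$. This is the crucial cancellation: although $w_i$ is globally supported, its $b$-pairing with any function is local, so $\|g_i\|_{b(\Omega)}$ (which may greatly exceed $\|w_i\|_{b(\Omega)}$) never appears by itself.

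Next I would expand $\|w\|_{b(\Omega)}^2=b(w,w)=\sum_{i=1}^N b(w,w_i)$ by \eqref{lemma4-1-1}. For each $i$ I replace $w$ inside $b(w,w_i)$ by the cut-off interpolant $I_h(\chi_i^{k+1}w)$: by \eqref{chik-1} we have $\chi_i^{k+1}\equiv1$ on $\Omega_{k,H}^{(i)}$ (here $k\ge2$ guarantees the layers in \eqref{chik-1} are well defined), so $I_h(\chi_i^{k+1}w)$ agrees with $w$ on $\Omega_{k,H}^{(i)}$ and, by the locality just established, $b(w,w_i)=b(I_h(\chi_i^{k+1}w),w_i)\le\|I_h(\chi_i^{k+1}w)\|_{b(\Omega)}\,\|w_i\|_{b(\Omega)}$. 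The function $I_h(\chi_i^{k+1}w)$ is supported in $\overline{\Omega_{k+2,H}^{(i)}}$, and its $b$-norm is bounded by the local energy of $w$: the $a$-part through Assumption \ref{assump-2} and \eqref{chi-i-k-estimate} (with $\mathcal O=\Omega_{k+1,H}^{(i)}$, $\mathcal D=\Omega_{k+2,H}^{(i)}$), producing a factor $\tilde CM^2$ and the quantities $a_j(w,w)+s_j(w,w)$, while the $s_l$-part is handled with \eqref{inequality-Ih-s}. The stray terms $s_j(w,w)$ are then converted to $b$-energy by Lemma \ref{lemma-pre}, namely $s_j(w,w)\le2\Lambda|w|_{a(\Omega_j)}^2+3\|\pi_jw\|_{s_j}^2$, which is precisely where the factor $\Lambda$ is created; altogether $\|I_h(\chi_i^{k+1}w)\|_{b(\Omega)}^2\le C\Lambda\sum_{\Omega_j\subset\Omega_{k+2,H}^{(i)}}\big(a_j(w,w)+\|\pi_jw\|_{s_j}^2\big)$.

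Finally I would sum over $i$ and apply Cauchy--Schwarz, obtaining $\|w\|_{b(\Omega)}^2\le\big(\sum_i\|I_h(\chi_i^{k+1}w)\|_{b(\Omega)}^2\big)^{1/2}\big(\sum_i\|w_i\|_{b(\Omega)}^2\big)^{1/2}$. Each subdomain $\Omega_j$ lies in $\Omega_{k+2,H}^{(i)}$ for at most $O((2k+1)^d)$ indices $i$, so the first sum is bounded by $C\Lambda(2k+1)^d\|w\|_{b(\Omega)}^2$; cancelling one power of $\|w\|_{b(\Omega)}$ and squaring yields \eqref{lemma5-0}, with $\tilde C_1$ absorbing $C$ and the overlap constant $M$. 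I expect the main obstacle to be the locality step: verifying rigorously that $b(v,w_i)$ depends only on $v|_{\Omega_{k,H}^{(i)}}$ and that $I_h(\chi_i^{k+1}w)$ reproduces $w$ there up to controllable interpolation errors in the boundary layer, since this is exactly what allows the globally supported $w_i$ to be treated by a purely local, overlap-bounded argument; the bookkeeping of how many mesh layers the cut-off must span, and hence the exact power of $2k+1$, also has to be tracked with care.
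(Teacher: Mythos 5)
Your proposal is correct, and it uses the same toolbox as the paper — the cut-offs $\chi_i^k$, the partition-of-unity bound \eqref{chi-i-k-estimate}, Lemma \ref{lemma-pre} to manufacture the factor $\Lambda$, the interpolation stability of Assumption \ref{assump-2}, and a finite-overlap count — but your localization step is genuinely different and simpler. The paper localizes the test function to an \emph{annulus}: it combines two facts, the Galerkin orthogonality $b(w_i,v)=0$ for all $v\in V_{k,H}^{(i)}$ (obtained by comparing \eqref{psi-sub2} with \eqref{psi-ec-II}, i.e.\ the defining property of the economical basis) to remove $I_h(\chi_i^{k-1}w)$, and the locality that you isolate to remove $I_h((1-\chi_i^{k+1})w)$, giving $b(w_i,I_hw)=b(w_i,I_h((\chi_i^{k+1}-\chi_i^{k-1})w))$, and then expands $b$ into its $a$- and $s_l$-pieces before applying Cauchy--Schwarz. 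You use only the second fact — your observation that $v\mapsto b(v,w_i)=s_i(\pi_iv,p_i)-b(v,\ell_i)$ sees $v$ only on $\Omega^{(i)}_{k,H}$ is exactly what underlies the paper's identity $b(w_i,I_h((1-\chi_i^{k+1})w))=0$ — and then you apply Cauchy--Schwarz directly in the $b$-inner product, which is cleaner and, incidentally, never needs the hypothesis $k\ge 2$ (only $\chi_i^{k+1}$ appears in your argument, whereas the paper also needs $\chi_i^{k-1}$). The price is quantitative: your test functions live on the full ball $\Omega^{(i)}_{k+2,H}$, so a fixed $\Omega_j$ is hit by up to $(2k+5)^d$ indices $i$, and carrying your constants through yields $\|w\|_{b(\Omega)}^2\le \tilde{C}_1\Lambda(2k+5)^d\Sigma_{i=1}^N\|w_i\|_{b(\Omega)}^2$ with the same $\tilde{C}_1=3(\tilde{C}+1)C_IM^2$ — i.e.\ the stated bound only up to the dimension-dependent factor $((2k+5)/(2k+1))^d\le (9/5)^d$ for $k\ge2$. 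The paper's annulus $\Omega^{(i)}_{k+2,H}\setminus\Omega^{(i)}_{k-2,H}$ contains only $O(k^{d-1})$ subdomains, so the Galerkin-orthogonality step in principle buys a strictly better $k$-count (the paper itself then bounds it loosely by $(2k+1)^d$); if you want to recover the paper's exact statement, add that one missing ingredient and subtract $I_h(\chi_i^{k-1}w)$ from your test function as well.
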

where $\tilde{C}_1=3(\tilde{C}+1)C_I M^2$, $M$ and $\tilde{C}$ are separately defined by \eqref{S-i-M} and \eqref{inequality-s-a-strong}.
\begin{proof}
Using \eqref{lemma4-1-1}, \eqref{psi-sub2} and \eqref{chik-1}, we can easily obtain
\begin{eqnarray*}
b(w_i,I_h(\chi_i^{k-1}w))=0,~~~~b(w_i,I_h((1-\chi_i^{k+1})w))=0,
\end{eqnarray*}
from this and noting that $w\in V_h$, we have
\begin{eqnarray*}\nonumber
      \|w\|_{b(\Omega)}^2 &=& \Sigma_{i=1}^Nb(w_i,I_hw)=\Sigma_{i=1}^N b(w_i,I_h((1-\chi_i^{k+1}+\chi_i^{k+1}-\chi_i^{k-1}+\chi_i^{k-1})w))\\
                &=& \Sigma_{i=1}^N b(w_i,I_h((\chi_i^{k+1}-\chi_i^{k-1})w)).
\end{eqnarray*}

Utilizing \eqref{b} for the right side of the above equation, we know
\begin{eqnarray*}\nonumber
\|w\|_{b(\Omega)}^2
&\le&\Sigma_{i=1}^N\big(|a(w_i,I_h((\chi_i^{k+1}-\chi_i^{k-1})w))|
+\Sigma_{l=1}^N|s_l(\pi_lw_i,\pi_l[I_h((\chi_i^{k+1}-\chi_i^{k-1})w)])|\big),
\end{eqnarray*}
let $\Omega^{(i,H)}_{k,2}=\Omega^{(i)}_{k+2,H}\backslash \Omega^{(i)}_{k-2,H}$, and use \eqref{Ih-bound}, \eqref{Ih-bound-s}, the boundedness of $\pi_i$, \eqref{chik-1} and \eqref{inequality-Ih-s}, we derive
\begin{eqnarray}\nonumber
\|w\|_{b(\Omega)}^2
&\leq& C_I^{\frac{1}{2}}(\Sigma_{i=1}^N(|w_i|_{a(\Omega)}^2+\Sigma_{l=1}^N\|\pi_lw_i\|_{s_l}^2))^{\frac{1}{2}}
\cdot\\\label{w-normb-1}
&&(\Sigma_{i=1}^N(|(\chi_i^{k+1}-\chi_i^{k-1})w|_{a(\Omega^{(i,H)}_{k,2})}^2+\Sigma_{\Omega_l\subset \Omega^{(i,H)}_{k,2} }\|w\|_{s_l}^2))^{\frac{1}{2}}.
\end{eqnarray}

For the right side of \eqref{w-normb-1}, we use \eqref{chi-i-k}, \eqref{chi-i-k-estimate}, \eqref{lemma2-proof1}, and deduce
\begin{eqnarray*}\nonumber
\|w\|_{b(\Omega)}^2
&\leq& C_I^{\frac{1}{2}}(\Sigma_{i=1}^N(|w_i|_{a(\Omega)}^2+\Sigma_{l=1}^N\|\pi_lw_i\|_{s_l}^2))^{\frac{1}{2}}
\cdot\\\nonumber
&&(\Sigma_{i=1}^N(|\sum\limits_{\Omega_l\subset \Omega^{(i)}_{k+1,H}\backslash \Omega^{(i)}_{k-1,H} }\theta_l w|_{a(\Omega^{(i,H)}_{k,2})}^2+\sum\limits_{\Omega_l\subset \Omega^{(i,H)}_{k,2} }\|w\|_{s_l}^2))^{\frac{1}{2}}\\\nonumber
&\leq& C_I^{\frac{1}{2}}(1+2\Lambda)^{\frac{1}{2}}(\tilde{C}M^2+1)^{\frac{1}{2}}(\Sigma_{i=1}^N(|w_i|_{a(\Omega)}+\Sigma_{l=1}^N\|\pi_lw_i\|_{s_l}^2))^{\frac{1}{2}}\\\nonumber
&&(\Sigma_{i=1}^N(|w|_{a(\Omega^{(i,H)}_{k,2})}^2+\sum\limits_{\Omega_l\in \Omega^{(i,H)}_{k,2} }\|\pi_lw\|_{s_l}^2))^{\frac{1}{2}},
\end{eqnarray*}
from this and noting that each $\Omega_l$ is overlapped $k$ times on each direction, we get
\begin{eqnarray}\nonumber
\|w\|_{b(\Omega)}^2&\le& (2k+1)^{\frac{d}{2}}C_I^{\frac{1}{2}}(1+2\Lambda)^{\frac{1}{2}}(\tilde{C}M^2+1)^{\frac{1}{2}}
(\Sigma_{i=1}^N\|w_i\|_{b(\Omega)}^2)^{\frac{1}{2}}\|w\|_{b(\Omega)}
\end{eqnarray}
which completes the proof of \eqref{lemma5-0}.
\end{proof}

Inspired by \cite{WCK2020}, we obtain the following exponential decay
property for $w_i$ which is the key property of this method.
\begin{lemma}\label{lemma-key}
If both the ASSUMPTION \ref{assump-1-strong} and \ref{assump-2} hold, and $k=2m(m\geq1)$, we have
\begin{eqnarray}\label{lemma2-4-new}
 \|w_i\|_{b(\Omega)}^2 &\leq& \tilde{C}_2\Lambda E^{1-m} \|w_{\bar{\Psi}^{(i)}}\|_{b(\Omega)}^2,
\end{eqnarray}
where the constant $\tilde{C}_2=3\tilde{C}M^2C_I$, $E = ((\hat{C}(1+\Lambda))^{-1}+1) > 1$, $\hat{C}=(\frac{3}{2}\tilde{C}M^2+2)C_I$.
\end{lemma}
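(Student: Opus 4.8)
The plan is to combine a Galerkin-type orthogonality of the localization error $w_i$ with an iterated annular energy estimate that contracts geometrically in the number of layers $m$. First I would record the orthogonality: subtracting \eqref{psi-sub2} from \eqref{psi-ec-II} and forming the combination with the coefficients $c_{ij}$ (recall from \eqref{lemma4-1-2} that $w_i=w_{\bar\psi^{(i)}}-w_{\bar\psi_k^{(i)}}$), the sources $s_i(\phi_j^{(i)},\pi_i v)$ cancel for every $v\in V_{k,H}^{(i)}$, so that
\begin{eqnarray*}
b(w_i,v)=0,\qquad\forall\, v\in V_{k,H}^{(i)}.
\end{eqnarray*}
Hence $w_{\bar\psi_k^{(i)}}$ is the $b(\cdot,\cdot)$-projection of $w_{\bar\psi^{(i)}}$ onto $V_{k,H}^{(i)}$ and $w_i$ is the $b$-orthogonal, $b$-norm minimal projection error; moreover, because the source of $w_{\bar\psi^{(i)}}$ is confined to $\bar\Omega_i$, one also has $b(w_{\bar\psi^{(i)}},v)=0$ whenever $supp(v)\cap\bar\Omega_i=\emptyset$, which is the mechanism driving the decay away from $\Omega_i$.

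Next I would introduce the cutoffs $\chi_i^{p}$ of \eqref{chi-i-k}, which by \eqref{chik-1} vanish on the core $\Omega_{p-1,H}^{(i)}$ and equal one outside $\Omega_{p+1,H}^{(i)}$, and insert interpolated cutoffs $I_h(\chi_i^{p}w_i)$ as admissible test functions. Exactly as in the proof of LEMMA \ref{lemma5}, the orthogonality collapses the global $b$-energy onto a single annular shell $\Omega_{p+2,H}^{(i)}\setminus\Omega_{p-2,H}^{(i)}$; controlling the interpolation and the $\pi_l$-terms there requires \eqref{Ih-bound}--\eqref{Ih-bound-s}, the boundedness of $\pi_l$, \eqref{inequality-Ih-s}, and the estimate \eqref{chi-i-k-estimate}, while LEMMA \ref{lemma-pre} trades the $s_l$-mass on the shell for $a$-energy and generates the factor $(1+\Lambda)$ together with $\hat C=(\tfrac32\tilde CM^2+2)C_I$.

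The heart of the argument is then a one-step contraction for the tail energy. Writing $R_p$ for the $b$-energy of $w_i$ outside $\Omega_{2p,H}^{(i)}$ and combining the shell estimate with the orthogonality on the intervening layers, I expect the telescoping bound $R_p\le\hat C(1+\Lambda)\,(R_{p-1}-R_p)$, equivalently $R_p\le E^{-1}R_{p-1}$ with $E=(\hat C(1+\Lambda))^{-1}+1>1$. Iterating this $m-1$ times (recall $k=2m$) produces the factor $E^{-(m-1)}=E^{1-m}$; collecting the single-shell constant $\tilde C_2=3\tilde CM^2C_I$ from the overlap counting of \eqref{chi-i-k-estimate} (each $\Omega_l$ is met at most $M$ times), the one application of LEMMA \ref{lemma-pre} that produces the leading $\Lambda$, and bounding the starting energy by the full norm $\|w_{\bar\psi^{(i)}}\|_{b(\Omega)}^2$ of the ideal corrector (the projection error is $b$-contractive), gives \eqref{lemma2-4-new}.

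The main obstacle is establishing the one-step contraction with a constant strictly below one without the interpolant $I_h$ and the projections $\pi_l$ spoiling the localization: neither commutes with the cutoffs $\chi_i^p$, so the shell argument generates cross terms on the overlap of neighbouring shells that must be reabsorbed through ASSUMPTION \ref{assump-1-strong}, \eqref{chi-i-k-estimate}, and LEMMA \ref{lemma-pre}, and I must ensure that the $s_l(\pi_l\cdot,\pi_l\cdot)$ part of $b(\cdot,\cdot)$ in \eqref{b} telescopes cleanly into $R_{p-1}-R_p$ rather than leaking energy that would push the factor $E^{-1}$ back up toward $1$. Once this delicate accounting is in place, the remaining steps are the Cauchy--Schwarz and overlap-counting bookkeeping already rehearsed in LEMMA \ref{lemma5}.
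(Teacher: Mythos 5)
You have the right ingredients---the Galerkin orthogonality $b(w_i,v)=0$ for all $v\in V_{k,H}^{(i)}$, the resulting best-approximation property of $w_i$, and the source-localization mechanism ($\pi_i v=0$ whenever $v$ vanishes on $\Omega_i$, so that $b(w_{\bar{\Psi}^{(i)}},v)=0$ for such $v$)---but you apply the annular contraction to the wrong function, and this breaks the argument. The decay recursion cannot be established for the tails of $w_i$: to run the shell argument one must test with cutoffs of the function itself, e.g.\ $I_h((1-\chi_i^{2p-1})w_i)$, and such a function is neither in $V_{k,H}^{(i)}$ (both $1-\chi_i^{2p-1}$ and $w_i$ are nonzero outside $\Omega^{(i)}_{k,H}$) nor does it vanish near $\partial\Omega^{(i)}_{k,H}$. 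Unlike $w_{\bar{\Psi}^{(i)}}$, which satisfies \eqref{psi-sub2} against \emph{every} $v\in V_h$, the error $w_i=w_{\bar{\Psi}^{(i)}}-w_{\bar{\Psi}_k^{(i)}}$ satisfies no equation valid against test functions straddling $\partial\Omega^{(i)}_{k,H}$, because the truncated corrector contributes nonzero terms $b(w_{\bar{\Psi}_k^{(i)}},v)$ exactly there; indeed the energy of $w_i$ concentrates near that truncation boundary and does not decay away from $\Omega_i$ at all. (Incidentally, your description of the cutoffs is inverted: by \eqref{chik-1}, $\chi_i^{p}$ equals one on the core and vanishes far away; what you describe is $1-\chi_i^{p}$.)

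Independently of this, the final assembly is a non sequitur: even granting the recursion $R_p\le E^{-1}R_{p-1}$ for the tails of $w_i$, iterating it bounds only the far-field energy $R_{m-1}$ in terms of $R_0=\|w_i\|_{b(\Omega)}^2$; it can never bound $R_0$ itself, which is what \eqref{lemma2-4-new} asserts. Your closing step (``the projection error is $b$-contractive'') gives $R_0\le\|w_{\bar{\Psi}^{(i)}}\|_{b(\Omega)}^2$ with no factor $E^{1-m}$, so your chain proves only that the far tail of $w_i$ is exponentially small---a much weaker statement. The paper reverses the roles of the two functions: (i) the annulus estimate \eqref{wpsi-1-2} and hence the decay \eqref{lemma3-2-new}--\eqref{lemma4-0-new} are proved for the \emph{ideal} corrector $w_{\bar{\Psi}^{(i)}}$, using that the source $s_i(w_{\Phi_{\pi}^{(i)}},\pi_i(I_h((1-\chi_i^{k-1})w_{\bar{\Psi}^{(i)}})))$ vanishes since the test function vanishes on $\Omega_i$; (ii) the exponential factor is then transferred to the \emph{whole} of $\|w_i\|_{b(\Omega)}^2$ through quasi-optimality \eqref{lemma2-s2-proof1-new} with the comparison function $\bar{v}=I_h(\chi_i^{k-1}w_{\bar{\Psi}^{(i)}})\in V_{k,H}^{(i)}$, whose error $w_{\bar{\Psi}^{(i)}}-\bar{v}$ is supported in $\Omega\backslash\Omega^{(i)}_{k-2,H}$, yielding \eqref{mid-1}; combining \eqref{mid-1} with the decay at level $k-2=2(m-1)$ gives \eqref{lemma2-4-new}. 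Your proposal contains both ingredients but wires them backwards, and as written the estimate does not follow.
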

  \begin{proof}
By \eqref{psi-sub2}, \eqref{psi-ec-II}, and noting \eqref{lemma-0}, we have
\begin{eqnarray*}
b(w_{\bar{\Psi}^{(i)}}-w_{\bar{\Psi}_k^{(i)}},v) = 0,~~\forall v\in V_{k,H}^{(i)},
\end{eqnarray*}
from this and noting that $\bar{v}:=I_h(\chi_i^{k-1} w_{\bar{\Psi}^{(i)}}) \in V_{k,H}^{(i)}$, we can get
\begin{eqnarray}\label{lemma2-s2-proof1-new}
\|w_{i}\|_{b(\Omega)}^2 =\min_{\forall v\in V_{k,H}^{(i)}}\|w_{\bar{\Psi}^{(i)}}-v\|_{b(\Omega)}^2\le\|w_{\bar{\Psi}^{(i)}}-\bar{v}\|_{b(\Omega)}^2.
\end{eqnarray}

Use \eqref{chik-1}, \eqref{chi-i-k-estimate} and \eqref{lemma2-proof1}, let $\Omega^{(i,o)}_{k-2,H} =\Omega\backslash\Omega^{(i)}_{k-2,H}$, we have
\begin{eqnarray}\nonumber
|w_{\bar{\Psi}^{(i)}}-\bar{v}|_{a(\Omega)}^2
&=& C_{I_a} |(1-\chi_i^{k-1} )w_{\bar{\Psi}^{(i)}}|_{a(\Omega^{(i,o)}_{k-2,H})}^2\\
&\le& C_{I_a}\tilde{C}M^2(|w_{\bar{\Psi}^{(i)}}|_{a(\Omega^{(i,o)}_{k-2,H})}^2+\sum\limits_{\Omega_l\subset                {\Omega^{(i,o)}_{k-2,H}}}\|w_{\bar{\Psi}^{(i)}}\|_{s_l}^2) \nonumber\\\label{wpsi-barv-a}
&\leq& C_{I_a}\tilde{C}M^2(1+2\Lambda) \|w_{\bar{\Psi}^{(i)}}\|_{b(\Omega^{(i,o)}_{k-2,H})}^2.
\end{eqnarray}

By the boundedness of $\pi_l$, \eqref{Ih-bound-s}, \eqref{inequality-Ih-s} and \eqref{lemma2-proof1}, we deduce
\begin{eqnarray}\nonumber
\sum\limits_{l=1}^N \|\pi_l(w_{\bar{\Psi}^{(i)}}-\bar{v})\|_{s_l}^2
&\le& C_{I_s} \sum\limits_{\Omega_l\subset {\Omega^{(i,o)}_{k-2,H}}}\|(1-\chi_i^{k-1} ) w_{\bar{\Psi}^{(i)}}\|_{s_l}^2 \\\label{wpsi-barv-sl}
&\leq& C_{I_s} \sum\limits_{\Omega_l\subset {\Omega^{(i,o)}_{k-2,H}}}\|w_{\bar{\Psi}^{(i)}}\|_{s_l}^2 \leq C_{I_s}(1+2\Lambda)\|w_{\bar{\Psi}^{(i)}}\|_{b(\Omega^{(i,o)}_{k-2,H})}^2.
\end{eqnarray}

Combining \eqref{wpsi-barv-a}, \eqref{wpsi-barv-sl}, the definition of $\|\cdot\|_{b(\Omega)}$ and \eqref{lemma2-s2-proof1-new}, we obtain
\begin{eqnarray}\label{mid-1}
\|w_{i}\|_{b(\Omega)}^2
&\le&\tilde{C}M^2C_I(1+2\Lambda)\|w_{\bar{\Psi}^{(i)}}\|_{b(\Omega\backslash\Omega^{(i)}_{k-2,H})}^2.
\end{eqnarray}

Let $\Omega^{(i,o)}_{k,H} =\Omega\backslash \Omega^{(i)}_{k,H} $, $\Omega^{(i,H)}_{k-1,1}=\Omega^{(i)}_{k,H}\backslash \Omega^{(i)}_{k-2,H}$, use \eqref{b}, the definition of $I_h$, \eqref{chik-1} and \eqref{psi-sub2}, we get
\begin{eqnarray*}
\|w_{\bar{\Psi}^{(i)}}\|_{b(\Omega^{(i,o)}_{k,H})}^2
&=& b(w_{\bar{\Psi}^{(i)}},I_h((1-\chi_i^{k-1})w_{\bar{\Psi}^{(i)}}))_{\Omega}-a(w_{\bar{\Psi}^{(i)}},I_h((1-\chi_i^{k-1})w_{\bar{\Psi}^{(i)}}))_{\Omega^{(i,H)}_{k-1,1}}\nonumber\\
&-&\sum\limits_{\Omega_l\subset\Omega^{(i,H)}_{k-1,1}}
s_l(\pi_lw_{\bar{\Psi}^{(i)}},\pi_l(I_h((1-\chi_i^{k-1})w_{\bar{\Psi}^{(i)}})))\label{10}\\
&=& s_i(w_{\Phi^{(i)}_{\pi}},\pi_i(I_h((1-\chi_i^{k-1})w_{\bar{\Psi}^{(i)}})))-a(w_{\bar{\Psi}^{(i)}},I_h((1-\chi_i^{k-1})w_{\bar{\Psi}^{(i)}}))_{\Omega^{(i,H)}_{k-1,1}}\nonumber\\
&-&\sum\limits_{\Omega_l\subset\Omega^{(i,H)}_{k-1,1}}
s_l(\pi_lw_{\bar{\Psi}^{(i)}},\pi_l(I_h((1-\chi_i^{k-1})w_{\bar{\Psi}^{(i)}}))).\nonumber
\end{eqnarray*}

From this and using \eqref{chik-1}, \eqref{Ih-bound}, \eqref{Ih-bound-s}, the boundedness of $\pi_i$, \eqref{inequality-Ih-s}, Schwarz inequality, \eqref{chi-i-k-estimate} and  \eqref{lemma2-proof1}, let $\hat{C}=(\frac{3}{2}\tilde{C}M^2+2)C_I$, we derive
\begin{eqnarray}
\|w_{\bar{\Psi}^{(i)}}\|_{b(\Omega^{(i,o)}_{k,H})}^2
&\le&\frac{1}{2}C_{I_a}(|w_{\bar{\Psi}^{(i)}}|^2_{a(\Omega^{(i,H)}_{k-1,1})} +|(1-\chi_i^{k-1})w_{\bar{\Psi}^{(i)}}|^2_{a(\Omega^{(i,H)}_{k-1,1})})\nonumber\\
&+&\frac{1}{2}C_{I_s}\sum\limits_{\Omega_l\subset\Omega^{(i,H)}_{k-1,1}}
(\|\pi_lw_{\bar{\Psi}^{(i)}}\|^2_{s_l}+\|w_{\bar{\Psi}^{(i)}}\|^2_{s_l})\nonumber\\
&\le&\frac{1}{2}C_{I_a}((1+\tilde{C}M^2)|w_{\bar{\Psi}^{(i)}}|^2_{a(\Omega^{(i,H)}_{k-1,1})} + \tilde{C}M^2\sum\limits_{\Omega_l\subset\Omega^{(i,H)}_{k-1,1}}\|w_{\bar{\Psi}^{(i)}}\|_{s_l}^2)\nonumber\\
&+&\frac{1}{2}C_{I_s}\sum\limits_{\Omega_l\subset\Omega^{(i,H)}_{k-1,1}}
(\|\pi_lw_{\bar{\Psi}^{(i)}}\|^2_{s_l}+\|w_{\bar{\Psi}^{(i)}}\|_{s_l}^2)\nonumber\\
&\leq& (\frac{3}{2}C_{I_a}\tilde{C}M^2+2C_{I_s})(1+\Lambda)(|w_{\bar{\Psi}^{(i)}}|_{a(\Omega^{(i,H)}_{k-1,1})}^2+\sum\limits_{\Omega_l\subset\Omega^{(i,H)}_{k-1,1}}\|\pi_lw_{\bar{\Psi}^{(i)}}\|_{s_l}^2)\nonumber\\
&=& \hat{C}(1+\Lambda)\|w_{\bar{\Psi}^{(i)}}\|_{b(\Omega^{(i,H)}_{k-1,1})}^2.\label{wpsi-1-2}
\end{eqnarray}

It is easy to see that \eqref{wpsi-1-2} is equivalent to
\begin{eqnarray*}
(\hat{C}(1+\Lambda))^{-1}\|w_{\bar{\Psi}^{(i)}}\|_{b(\Omega\backslash \Omega^{(i)}_{k,H})}^2\le \|w_{\bar{\Psi}^{(i)}}\|_{b(\Omega^{(i)}_{k,H}\backslash \Omega^{(i)}_{k-2,H})}^2.
\end{eqnarray*}
We add $\|w_{\bar{\Psi}^{(i)}}\|_{b(\Omega \backslash \Omega^{(i)}_{k,H} )}^2$ on both sides of the above equation and obtain
\begin{eqnarray}\label{lemma3-2-new}
\|w_{\bar{\Psi}^{(i)}}\|_{b(\Omega\backslash \Omega^{(i)}_{k,H})}^2\leq E^{-1}\|w_{\bar{\Psi}^{(i)}}\|_{b(\Omega \backslash \Omega^{(i)}_{k-2,H} )}^2,
\end{eqnarray}
where $E = ((\hat{C}(1+\Lambda))^{-1}+1) > 1$.

Applying \eqref{lemma3-2-new} recursively, we have
\begin{eqnarray}\label{lemma4-0-new}
\|w_{\bar{\Psi}^{(i)}}\|_{b(\Omega\backslash\Omega^{(i)}_{k,H})}^2 \leq E^{-m} \|w_{\bar{\Psi}^{(i)}}\|_{b(\Omega)}^2,~~k=2m,~m\geq 1.
\end{eqnarray}

Combining \eqref{mid-1} and \eqref{lemma4-0-new}, we get \eqref{lemma2-4-new}.
  \end{proof}

In order to estimate $\|w_{\bar{\Psi}^{(i)}}\|_{b(\Omega)}^2$, we express the following assumption firstly.

\begin{assumption}\label{assump-3}
Assume that there exists a positive constant $C_{p}$ which satisfies
\begin{eqnarray}\label{Poincare}
\Sigma_{l=1}^N\|\pi_l u_0\|_{s_l}^2\lesssim C_{p}|u_0|_{a(\Omega)}^2,~~~~\forall u_0\in V_0,
\end{eqnarray}
where the constant in ``$\lesssim$'' is independent of the mesh and model parameters.
\end{assumption}

In an analogous way to lemma 2 of \cite{WCK2020}, we can obtain
\begin{lemma}\label{lemma-8}
For all $v^{(i)}_{aux}\in V^{(i)}_{aux}$, there exists a function $z\in V_h$ such that
\begin{eqnarray}\label{lemma6-7-2-assump-new-1}
\pi_i z &=& v^{(i)}_{aux}, ~~~~ i=1,\cdots,N,\\\label{lemma6-7-2-assump-new-2}
\|z\|_{b(\Omega)}^2 &\leq& (1+\Lambda^{-1}) \Sigma_{i=1}^N \|v^{(i)}_{aux}\|_{s_i}^2.
\end{eqnarray}
\end{lemma}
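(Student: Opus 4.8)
The plan is to construct $z$ as a direct superposition of the (zero‑extended) generalized eigenfunctions and to exploit the simultaneous $a_i$/$s_i$-orthogonality \eqref{delta-ij} together with the spectral gap \eqref{lambda-order1}. Writing each datum in the eigenbasis, $v^{(i)}_{aux}=\sum_{j=1}^{l_i}c_{ij}\phi_j^{(i)}$, the $s_i$-orthonormality in \eqref{delta-ij} gives $\|v^{(i)}_{aux}\|_{s_i}^2=\sum_{j=1}^{l_i}|c_{ij}|^2$, so the right-hand side of \eqref{lemma6-7-2-assump-new-2} is simply $(1+\Lambda^{-1})\sum_{i=1}^N\sum_{j=1}^{l_i}|c_{ij}|^2$. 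I would then take as candidate
\[
z=\sum_{i=1}^N\sum_{j=1}^{l_i}c_{ij}\phi_j^{(i)},
\]
where each $\phi_j^{(i)}$ is understood as its zero extension to $\Omega$, and verify the two asserted properties in turn.

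To obtain \eqref{lemma6-7-2-assump-new-1} I would compute $\pi_l z$ through its defining relations \eqref{pi-defsource}, i.e. through the values $s_l(z,\phi_m^{(l)})$ for $m=1,\dots,l_l$. Since $s_l(\cdot,\cdot)$ is a form on $V_h(\bar{\Omega}_l)$ that only sees $\bar{\Omega}_l$, the zero-extended eigenfunctions attached to subdomains $i\neq l$ contribute nothing, and the $s_l$-orthonormality in \eqref{delta-ij} then yields $s_l(z,\phi_m^{(l)})=c_{lm}$. Hence $\pi_l z=\sum_m c_{lm}\phi_m^{(l)}=v^{(l)}_{aux}$ for every $l$, which is exactly \eqref{lemma6-7-2-assump-new-1}.

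For the norm bound \eqref{lemma6-7-2-assump-new-2} I would split $\|z\|_{b(\Omega)}^2=a(z,z)+\sum_{l=1}^N\|\pi_l z\|_{s_l}^2$ according to the definition \eqref{b}. The second group equals $\sum_{l}\|v^{(l)}_{aux}\|_{s_l}^2$ by the previous step. For the energy term I would use that $a(\cdot,\cdot)$ decomposes over the nonoverlapping partition, $a(z,z)=\sum_{i=1}^N a_i(v^{(i)}_{aux},v^{(i)}_{aux})$, and then invoke the $a_i$-orthogonality in \eqref{delta-ij} together with the identity $a_i(\phi_j^{(i)},\phi_j^{(i)})=1/\lambda_j^{(i)}$, obtained by testing \eqref{GEP-general} with $w=\phi_j^{(i)}$ and using the normalization $s_i(\phi_j^{(i)},\phi_j^{(i)})=1$. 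Because $j\le l_i$ forces $\lambda_j^{(i)}\ge\Lambda$ by the ordering \eqref{lambda-order1}, each $1/\lambda_j^{(i)}\le\Lambda^{-1}$, so $a_i(v^{(i)}_{aux},v^{(i)}_{aux})=\sum_j|c_{ij}|^2/\lambda_j^{(i)}\le\Lambda^{-1}\|v^{(i)}_{aux}\|_{s_i}^2$. Summing over $i$ gives $a(z,z)\le\Lambda^{-1}\sum_i\|v^{(i)}_{aux}\|_{s_i}^2$, and adding the two contributions produces exactly the factor $(1+\Lambda^{-1})$.

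The step I expect to require the most care is the localization invoked in both verifications: that the zero extensions of eigenfunctions belonging to distinct nonoverlapping subdomains do not interact, so that $z|_{\bar{\Omega}_i}$ enters only through $a_i$ and $s_i$. This is the sole place where the geometric structure of $\{\Omega_i\}$ (interfaces of measure zero, forms of integral/energy type) is used, and it is precisely the point handled in lemma 2 of \cite{WCK2020}; everything else reduces to linear algebra in the eigenbasis combined with the threshold $\Lambda$.
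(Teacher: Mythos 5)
Your verification steps are the same linear algebra the paper uses: expanding $v_{aux}^{(i)}=\sum_{j\le l_i}c_{ij}\phi_j^{(i)}$, deducing $\pi_i z=v_{aux}^{(i)}$ from the $s_i$-orthonormality in \eqref{delta-ij}, and bounding the energy via $a_i(\phi_j^{(i)},\phi_j^{(i)})=1/\lambda_j^{(i)}\le\Lambda^{-1}$, which follows from \eqref{GEP-general}, \eqref{delta-ij} and the ordering \eqref{lambda-order1}. The gap is in your choice of $z$. The lemma demands $z\in V_h$, but the superposition of zero-extended eigenfunctions is in general \emph{not} an element of $V_h$: the eigenfunctions $\phi_j^{(i)}\in V_h(\bar{\Omega}_i)$ do not vanish on $\partial\Omega_i$, so their zero extensions jump across the subdomain interfaces. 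In the paper's first and principal application, $V_h=V_{h,1}$ is the conforming piecewise-linear space, so your $z$ lies outside $V_{h,1}$ and indeed outside $H^1(\Omega)$; then $a(z,z)$ is not defined (certainly not by the subdomain-sum formula you invoke), and, more basically, $\pi_l z$ and $\|z\|_{b(\Omega)}$ make no sense, because the projection \eqref{pi-defsource} and the form \eqref{b} are defined only for arguments in $V_h$. The defect propagates to the one place the lemma is used: in the proof of LEMMA \ref{assump-6}, $z$ is substituted as a test function $v\in V_h$ in \eqref{lemma6-7-1-new-new}, which is illegitimate for your $z$.

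The missing idea --- essentially the only content of the paper's proof beyond the computations you already have --- is to push the piecewise-defined datum into $V_h$ with the interpolation operator of ASSUMPTION \ref{assump-2}: the paper sets $z:=I_h v_{aux}\in V_h$ (the interpolant of the function equal to $v_{aux}^{(i)}$ on each $\bar{\Omega}_i$) and observes that $z|_{\bar{\Omega}_i}=v_{aux}^{(i)}$, after which both of your verifications go through verbatim, with the exact constant $(1+\Lambda^{-1})$ since nothing is lost in the interpolation. Your closing paragraph senses that the interfaces are the delicate point, but misdiagnoses it as a question of whether contributions from distinct subdomains ``interact''; the real issue is conformity, i.e.\ membership of $z$ in $V_h$. (Your construction does happen to work for the PWLS space $V_{h,2}$, which is discontinuous across elements so that zero extension stays in the space; but the lemma is stated and needed at the level of the abstract framework, which must cover the conforming case.)
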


\begin{proof}
For any $v^{(i)}_{aux}\in V^{(i)}_{aux}$, let $z:=I_h v_{aux}^{(i)}\in V_h(\Omega)$,
it is easy to know that
\begin{eqnarray}\label{chi-v-new}
z|_{\bar{\Omega}_i}=v_{aux}^{(i)},~~\pi_i z &=& \Sigma_{j=1}^{l_i}s_i(z|_{\Omega_i},\phi_j^{(i)})\phi_j^{(i)}.
\end{eqnarray}

Now we prove that the above function $z$ satisfies \eqref{lemma6-7-2-assump-new-1} and \eqref{lemma6-7-2-assump-new-2}.

Let
\begin{eqnarray}\label{vaux1-new}
v_{aux}^{(i)}:= \Sigma_{j=1}^{l_i}c_j\phi_j^{(i)},~~c_j\in \mathbb{C},~~j=1,\cdots,l_i.
\end{eqnarray}

By \eqref{chi-v-new}, \eqref{vaux1-new} and \eqref{delta-ij}, we have
\begin{eqnarray*}
s_i(z|_{\Omega_i},\phi_j^{(i)})
= s_i(v_{aux}^{(i)},\phi_j^{(i)})
= s_i(\Sigma_{k=1}^{l_i}c_k \phi_k^{(i)},\phi_j^{(i)})
= \Sigma_{k=1}^{l_i}c_k s_i( \phi_k^{(i)},\phi_j^{(i)})
= c_j.
\end{eqnarray*}
From this and \eqref{chi-v-new}, \eqref{vaux1-new}, we obtain \eqref{lemma6-7-2-assump-new-1}.

Furthermore, using \eqref{chi-v-new}, we conclude that
\begin{eqnarray*}
|z|_{a(\Omega_i)}^2 &=&  |z|_{\Omega_i}|_{a(\Omega_i)}^2=|v_{aux}^{(i)}|_{a(\Omega_i)}^2\leq \Lambda^{-1} \|v_{aux}^{(i)}\|_{s_i}^2.
\end{eqnarray*}

We add $\sum_{i=1}^N\|\pi_iz\|_{s_i}^2$ on both sides of the above equation, and note the definition of $\|\cdot\|_{b(\Omega)}$ and \eqref{lemma6-7-2-assump-new-1}, then \eqref{lemma6-7-2-assump-new-2} holds.
\end{proof}

From LEMMA \ref{lemma-8} and \eqref{Poincare}, we obtain
\begin{lemma}\label{assump-6}
If ASSUMPTION \ref{assump-3} holds, we have
\begin{eqnarray}\label{lemma6-7-a}
 \Sigma_{i=1}^N\|w_{\bar{\Psi}^{(i)}}\|_{b(\Omega)}^2 &\lesssim&  2(1+C_p)|u_{0,\bar{\psi}}|_{a(\Omega)}^2.
\end{eqnarray}
\end{lemma}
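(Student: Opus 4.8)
The plan is to prove \eqref{lemma6-7-a} by chaining three linked estimates relating the quantities $\Sigma_{i=1}^N\|w_{\bar{\Psi}^{(i)}}\|_{b(\Omega)}^2$, $\Sigma_{i=1}^N\|w_{\Phi^{(i)}_{\pi}}\|_{s_i}^2$ and $\|u_{0,\bar{\psi}}\|_{b(\Omega)}^2$, and then converting the $b$-norm of $u_{0,\bar{\psi}}=\Sigma_{i=1}^N w_{\bar{\Psi}^{(i)}}$ into the $a$-seminorm through ASSUMPTION \ref{assump-3}. The starting point is the defining relation \eqref{psi-sub2}: multiplying by $c_{ij}$ and summing over $j$ gives, for every $v\in V_h$,
\[
b(w_{\bar{\Psi}^{(i)}},v)=s_i(w_{\Phi^{(i)}_{\pi}},\pi_i v),
\]
where $w_{\Phi^{(i)}_{\pi}}=\Sigma_{j=1}^{l_i}c_{ij}\phi_j^{(i)}\in V_{aux}^{(i)}$.

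First I would bound $\Sigma_i\|w_{\bar{\Psi}^{(i)}}\|_{b(\Omega)}^2$ by $\Sigma_i\|w_{\Phi^{(i)}_{\pi}}\|_{s_i}^2$. Taking $v=w_{\bar{\Psi}^{(i)}}$ in the identity above yields $\|w_{\bar{\Psi}^{(i)}}\|_{b(\Omega)}^2=s_i(w_{\Phi^{(i)}_{\pi}},\pi_i w_{\bar{\Psi}^{(i)}})$. Summing over $i$, applying Cauchy--Schwarz first in $s_i$ and then in the discrete $\ell^2$ sense, and using the elementary monotonicity $\|\pi_i w_{\bar{\Psi}^{(i)}}\|_{s_i}^2\le\|w_{\bar{\Psi}^{(i)}}\|_{b(\Omega)}^2$ (immediate from \eqref{b}, since every term there is nonnegative), I obtain
\[
\Sigma_{i=1}^N\|w_{\bar{\Psi}^{(i)}}\|_{b(\Omega)}^2\le\Big(\Sigma_{i=1}^N\|w_{\Phi^{(i)}_{\pi}}\|_{s_i}^2\Big)^{1/2}\Big(\Sigma_{i=1}^N\|w_{\bar{\Psi}^{(i)}}\|_{b(\Omega)}^2\Big)^{1/2},
\]
so that, cancelling the common factor, $\Sigma_i\|w_{\bar{\Psi}^{(i)}}\|_{b(\Omega)}^2\le\Sigma_i\|w_{\Phi^{(i)}_{\pi}}\|_{s_i}^2$.

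Next I would control $\Sigma_i\|w_{\Phi^{(i)}_{\pi}}\|_{s_i}^2$ by $\|u_{0,\bar{\psi}}\|_{b(\Omega)}^2$. Applying LEMMA \ref{lemma-8} to the auxiliary functions $w_{\Phi^{(i)}_{\pi}}\in V_{aux}^{(i)}$ produces $z\in V_h$ with $\pi_i z=w_{\Phi^{(i)}_{\pi}}$ and $\|z\|_{b(\Omega)}^2\le(1+\Lambda^{-1})\Sigma_i\|w_{\Phi^{(i)}_{\pi}}\|_{s_i}^2$. Summing the defining identity over $i$ gives $b(u_{0,\bar{\psi}},v)=\Sigma_i s_i(w_{\Phi^{(i)}_{\pi}},\pi_i v)$; choosing $v=z$ collapses the right-hand side to $\Sigma_i\|w_{\Phi^{(i)}_{\pi}}\|_{s_i}^2$, whence
\[
\Sigma_{i=1}^N\|w_{\Phi^{(i)}_{\pi}}\|_{s_i}^2=b(u_{0,\bar{\psi}},z)\le\|u_{0,\bar{\psi}}\|_{b(\Omega)}(1+\Lambda^{-1})^{1/2}\Big(\Sigma_{i=1}^N\|w_{\Phi^{(i)}_{\pi}}\|_{s_i}^2\Big)^{1/2}.
\]
A second self-absorption then gives $\Sigma_i\|w_{\Phi^{(i)}_{\pi}}\|_{s_i}^2\le(1+\Lambda^{-1})\|u_{0,\bar{\psi}}\|_{b(\Omega)}^2$.

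Finally I would combine the two reductions, use $\Lambda>1$ so that $1+\Lambda^{-1}<2$, and invoke ASSUMPTION \ref{assump-3}: by \eqref{b} and \eqref{Poincare}, $\|u_{0,\bar{\psi}}\|_{b(\Omega)}^2=|u_{0,\bar{\psi}}|_{a(\Omega)}^2+\Sigma_l\|\pi_l u_{0,\bar{\psi}}\|_{s_l}^2\lesssim(1+C_p)|u_{0,\bar{\psi}}|_{a(\Omega)}^2$, which yields \eqref{lemma6-7-a}. The main obstacle is the correct deployment of LEMMA \ref{lemma-8}: one needs a single global test function $z$ whose local $s_i$-projections reproduce exactly the auxiliary parts $w_{\Phi^{(i)}_{\pi}}$ while $\|z\|_{b(\Omega)}$ remains controlled, and one must recognize that both reduction steps are Cauchy--Schwarz self-absorptions in which a common factor cancels. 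The passage to the $a$-seminorm on the right-hand side is exactly the point where ASSUMPTION \ref{assump-3} (and hence the constant $C_p$) is indispensable.
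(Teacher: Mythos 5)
Your proposal is correct and follows essentially the same route as the paper's own proof: the identity $b(w_{\bar{\Psi}^{(i)}},v)=s_i(w_{\Phi_{\pi}^{(i)}},\pi_i v)$ from \eqref{psi-sub2}, the self-absorption via Cauchy--Schwarz and the bound $\|\pi_i w_{\bar{\Psi}^{(i)}}\|_{s_i}\le\|w_{\bar{\Psi}^{(i)}}\|_{b(\Omega)}$, the application of LEMMA \ref{lemma-8} with $v=z$ to absorb $\Sigma_i\|w_{\Phi_{\pi}^{(i)}}\|_{s_i}^2$, and finally ASSUMPTION \ref{assump-3} to pass from $\|u_{0,\bar{\psi}}\|_{b(\Omega)}^2$ to $(1+C_p)|u_{0,\bar{\psi}}|_{a(\Omega)}^2$. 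The only cosmetic difference is that you sum over $i$ before cancelling the common factor, whereas the paper performs that estimate termwise; the substance is identical.
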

\begin{proof}
By using \eqref{psi-sub2} and \eqref{lemma-0}, we have
\begin{eqnarray}\label{psi-b-s}
b(w_{\bar{\Psi}^{(i)}},v)= s_i(w_{\Phi_{\pi}^{(i)}},\pi_iv),~~\forall v\in V_h.
\end{eqnarray}
From this and noting that $u_{0,\bar{\psi}}=\sum_{i=1}^N w_{\bar{\Psi}^{(i)}}$, we get
\begin{eqnarray}\label{lemma6-7-1-new-new}
b(u_{0,\bar{\psi}},v)= \Sigma_{i=1}^Ns_i(w_{\Phi_{\pi}^{(i)}},\pi_iv),~~\forall v\in V_h.
\end{eqnarray}

By using LEMMA \ref{lemma-8}, we see that for $w_{\Phi_{\pi}^{(i)}}\in V^{(i)}_{aux}$ there exists $z\in V_h$ such that
\begin{eqnarray}\label{lemma6-7-2-new}
\pi_i z = w_{\Phi_{\pi}^{(i)}}, ~\forall i=1,\cdots,N,~~\|z\|_{b(\Omega)}^2\leq (1+\Lambda^{-1}) \Sigma_{i=1}^N \|w_{\Phi_{\pi}^{(i)}}\|_{s_i}^2.
\end{eqnarray}

Set $v=z$ in \eqref{lemma6-7-1-new-new}, and by \eqref{lemma6-7-2-new}, we have
\begin{eqnarray*}
\Sigma_{i=1}^N\|w_{\Phi_{\pi}^{(i)}}\|_{s_i}^2 = b(u_{0,\bar{\psi}},z)\leq \|u_{0,\bar{\psi}}\|_{b(\Omega)} \|z\|_{b(\Omega)}\le  \|u_{0,\bar{\psi}}\|_{b(\Omega)}
\big((1+ \Lambda^{-1})\Sigma_{i=1}^N \|w_{\Phi_{\pi}^{(i)}}\|_{s_i}^2\big)^{\frac{1}{2}},
\end{eqnarray*}
namely,
\begin{eqnarray}\label{lemma6-7-0-new}
\Sigma_{i=1}^N\|w_{\Phi_{\pi}^{(i)}}\|_{s_i}^2 \leq (1+\Lambda^{-1})\|u_{0,\bar{\psi}}\|_{b(\Omega)}^2.
\end{eqnarray}

Furthermore, using the definition of $\|\cdot\|_{b(\Omega)}$ and \eqref{Poincare}, we can easily obtain
\begin{eqnarray}\label{lemma6-7-4-new}
\|u_{0,\bar{\psi}}\|_{b(\Omega)}^2 \lesssim (1+C_{p})|u_{0,\bar{\psi}}|_{a(\Omega)}^2.
\end{eqnarray}

Set $v=w_{\bar{\Psi}^{(i)}}$ in \eqref{psi-b-s} and noting the boundedness of $\pi_i$, we conclude
\begin{eqnarray*}
\|w_{\bar{\Psi}^{(i)}}\|_{b(\Omega)}^2
&\leq& \|w_{\Phi_{\pi}^{(i)}}\|_{s_i}\|\pi_iw_{\bar{\Psi}^{(i)}}\|_{s_i}
\le \|w_{\Phi_{\pi}^{(i)}}\|_{s_i}\|w_{\bar{\Psi}^{(i)}}\|_{{b(\Omega)}},
\end{eqnarray*}
from this and combining \eqref{lemma6-7-0-new} and \eqref{lemma6-7-4-new} give \eqref{lemma6-7-a}.
\end{proof}

Combining \eqref{lemma5-0}, \eqref{lemma2-4-new} and \eqref{lemma6-7-a}, and $a(u_{0,\bar{\psi}},u_{0,\bar{\psi}})\le a(u,u)$, we conclude
\begin{eqnarray}\label{lemma7-0}
\|w\|_{b(\Omega)}^2 &\lesssim&  C_0|u|_{a(\Omega)}^2,
\end{eqnarray}
where
\begin{eqnarray}\label{C0}
C_0&\le& \tilde{C}_3 \Lambda^2(2k+1)^d E^{1-\frac{k}{2}} (1+C_{p}),~~and~~\tilde{C}_3=2\tilde{C}_1\tilde{C}_2.
\end{eqnarray}

Substituting \eqref{lemma7-0} into \eqref{CondA2}, we have
\begin{eqnarray}\label{CondA2-new}
\Sigma_{i=1}^N|u_{i,\zeta}|_{a(\Omega'_i)}^2 + |u^{(k)}_{0,\zeta}|_{a(\Omega)}^2\le  C_2|u|_{a(\Omega)}^2,
\end{eqnarray}
where $C_2\le\tilde{C}_4(\Lambda+C_0), \tilde{C}_4=6+4C_{I}\tilde{C}M$.

Combining \eqref{max-ecnomic-proof2}, \eqref{CondA2-new} and Theorem 2.2 in \cite{HX2007}, we get 
\begin{theorem}\label{theorem-1-new}
If ASSUMPTION \ref{assump-1-strong}, \ref{assump-2} and \ref{assump-3} all hold and $k\geq 2$, we have
\begin{eqnarray}\label{theorem-ec}
\kappa(B^{-1}_{k,\bar{\psi}}A_h)\lesssim  2MC_2,
\end{eqnarray}
where the constant in ``$\lesssim$'' is independent of the mesh and model parameters.
\end{theorem}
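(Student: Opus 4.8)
The plan is to invoke the abstract two-level additive Schwarz condition number bound (Theorem 2.2 of \cite{HX2007}), under which $\kappa(B^{-1}_{k,\bar{\psi}}A_h)$ is controlled by the product of a \emph{stable decomposition} constant and a \emph{finite overlap} (strengthened Cauchy--Schwarz) constant. All of the analytic work has in fact been carried out in the preceding lemmas, so the proof reduces to matching these two constants to the already-established inequalities \eqref{CondA2-new} and \eqref{max-ecnomic-proof2} and reading off their product.

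First I would fix the space decomposition. For a given $u\in V_h$ with $\zeta=\bar{\psi}$, the relevant splitting is $u=\Sigma_{i=1}^N\Pi_iu^{\bar{\psi}}_{i}+\Pi^{k,\bar{\psi}}_{0}u^{k,\bar{\psi}}_{0}$, with local components $u^{\bar{\psi}}_{i}$ from \eqref{ui-def-ecnomic} and the economical coarse component $u^{k,\bar{\psi}}_{0}$ from \eqref{chuji2-1}. The stability of this splitting is exactly \eqref{CondA2-new}, namely $\Sigma_{i=1}^N|u^{\bar{\psi}}_{i}|_{a(\Omega'_i)}^2 + |u^{k,\bar{\psi}}_{0}|_{a(\Omega)}^2\le C_2|u|_{a(\Omega)}^2$ with $C_2\le\tilde{C}_4(\Lambda+C_0)$; this supplies the lower-bound (stable decomposition) constant $C_1=C_2$ in the abstract framework.

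Second I would supply the upper-bound constant via LEMMA \ref{theorem-ecnomic-1}, which gives $|\Sigma_{i=1}^{N}\Pi_iu_i+\Pi^{k,\bar{\psi}}_{0} u^{k,\bar{\psi}}_{0}|_{a(\Omega)}^2 \le 2M(\Sigma_{i=1}^{N}| u_i|_{a(\Omega'_i)}^2+| u^{k,\bar{\psi}}_{0}|_{a(\Omega)}^2)$ for arbitrary local and coarse functions. This is precisely the finite-overlap estimate with constant $2M$, where $M$ is the maximum number of overlapping neighbours from \eqref{S-i-M}. Feeding $C_1=C_2$ and the overlap constant $2M$ into Theorem 2.2 of \cite{HX2007} then yields $\kappa(B^{-1}_{k,\bar{\psi}}A_h)\le 2M\,C_2$, which is \eqref{theorem-ec}. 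The hypotheses ASSUMPTION \ref{assump-1-strong}, \ref{assump-2}, \ref{assump-3} and $k\ge 2$ enter only through their role in validating \eqref{CondA2-new}.

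The genuinely hard step is therefore not this final assembly but the stability bound \eqref{CondA2-new} itself, which hinges on controlling the coarse perturbation $\|w\|_{b(\Omega)}^2=\|u_{0,\bar{\psi}}-u^{k,\bar{\psi}}_{0}\|_{b(\Omega)}^2$ between the exact coarse basis and its truncation to $\Omega_{k,H}^{(i)}$. That control is obtained by chaining LEMMA \ref{lemma5} (local-to-global assembly of the $\|w_i\|_{b(\Omega)}$), the exponential decay LEMMA \ref{lemma-key} (where ASSUMPTION \ref{assump-2} together with the $b(\cdot,\cdot)$-orthogonality of the truncated basis forces the decay factor $E^{1-m}$), and LEMMA \ref{assump-6} (where ASSUMPTION \ref{assump-3} converts $\Sigma_i\|w_{\bar{\Psi}^{(i)}}\|_{b(\Omega)}^2$ into $|u_{0,\bar{\psi}}|_{a(\Omega)}^2$), which together produce \eqref{lemma7-0}. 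Since all of these are established earlier in the excerpt, the theorem follows immediately by combination.
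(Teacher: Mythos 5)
Your proposal is correct and follows exactly the paper's own route: the paper also obtains THEOREM \ref{theorem-1-new} by feeding the finite-overlap bound \eqref{max-ecnomic-proof2} of LEMMA \ref{theorem-ecnomic-1} and the stable-decomposition bound \eqref{CondA2-new} (itself resting on \eqref{lemma7-0} via LEMMA \ref{lemma5}, LEMMA \ref{lemma-key} and LEMMA \ref{assump-6}) into Theorem 2.2 of \cite{HX2007}. Your identification of where each assumption and each preceding lemma enters matches the paper's argument, so there is nothing to add.
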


\begin{corollary}\label{corollary-1}
Under the assumption of THEOREM \ref{theorem-1-new} and $C_p$ is a positive constant, then $\kappa(B^{-1}_{k,\bar{\psi}}A_h)$ is bounded by a constant only dependent on $\Lambda$ and $M$.
\end{corollary}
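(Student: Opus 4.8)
The plan is to begin from the estimate $\kappa(B^{-1}_{k,\bar{\psi}}A_h)\lesssim 2MC_2$ of THEOREM \ref{theorem-1-new} and to track the parameter dependence of every constant feeding into $C_2$. Chaining the definition of $C_2$ below \eqref{CondA2-new} with \eqref{C0} and the formulas for $\tilde{C}_1,\tilde{C}_2,\tilde{C}_3,\tilde{C}_4,\hat{C}$, I would write out the explicit bound $\kappa(B^{-1}_{k,\bar{\psi}}A_h)\lesssim 2M\tilde{C}_4\big(\Lambda+\tilde{C}_3\Lambda^2(2k+1)^d E^{1-k/2}(1+C_p)\big)$. First I would record that $\tilde{C}$ (from ASSUMPTION \ref{assump-1-strong}) and $C_I=\max\{C_{I_a},C_{I_s}\}$ (from ASSUMPTION \ref{assump-2}) are, by construction, independent of the mesh and model parameters, that $C_p$ is a genuine constant by hypothesis, and that the spatial dimension $d$ is fixed. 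It then follows that each of $\tilde{C}_1,\tilde{C}_2,\tilde{C}_3,\tilde{C}_4,\hat{C}$ and the decay base $E=(\hat{C}(1+\Lambda))^{-1}+1$ is a function of $\Lambda$ and $M$ alone, up to the fixed constants $\tilde{C},C_I,C_p,d$.

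The decisive step is to strip out the explicit $k$-dependence carried by the factor $(2k+1)^d E^{1-k/2}$. Because $\hat{C}(1+\Lambda)$ is finite and positive, $E>1$ holds strictly, so $E^{-k/2}$ decays geometrically while $(2k+1)^d$ grows only polynomially; hence $k\mapsto(2k+1)^d E^{1-k/2}$ tends to $0$ as $k\to\infty$ and is finite for every $k$, so that its supremum $C_E:=\sup_{k\ge 2}(2k+1)^d E^{1-k/2}$ is finite. Since $E$ depends only on $\Lambda$ and $M$ (and the fixed constants) and $d$ is fixed, $C_E$ is itself controlled by $\Lambda$ and $M$ alone. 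Substituting $(2k+1)^d E^{1-k/2}\le C_E$ into \eqref{C0} gives $C_0\le \tilde{C}_3\Lambda^2 C_E(1+C_p)$, which no longer involves $k$.

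Feeding this $k$-free estimate back through $C_2\le\tilde{C}_4(\Lambda+C_0)$ and $\kappa(B^{-1}_{k,\bar{\psi}}A_h)\lesssim 2MC_2$ then yields a bound in which only $\Lambda$ and $M$ survive as variable parameters, which is precisely the claim. I expect the only genuine difficulty to be the uniform-in-$k$ estimate of the second step: one must verify that the geometric rate $E^{-1/2}$ is strictly below $1$ before the polynomial prefactor $(2k+1)^d$ can be absorbed, and one should acknowledge that $C_E$, though finite for each pair $(\Lambda,M)$, degrades as $\Lambda\to\infty$ or $M\to\infty$ (since then $E\to 1^{+}$)---this is harmless, as the corollary asserts dependence on $\Lambda$ and $M$, not uniformity with respect to them. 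Everything else is routine bookkeeping of constants already established in LEMMA \ref{lemma5} through LEMMA \ref{assump-6} and THEOREM \ref{theorem-1-new}.
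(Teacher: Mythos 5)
Your proposal is correct and follows exactly the route the paper intends: the corollary is stated without a separate proof precisely because it amounts to the constant-tracking you perform, chaining $\kappa(B^{-1}_{k,\bar{\psi}}A_h)\lesssim 2MC_2$ with $C_2\le\tilde{C}_4(\Lambda+C_0)$ and \eqref{C0}, and observing that $E>1$ makes $\sup_{k\ge 2}(2k+1)^d E^{1-k/2}$ finite so the $k$-dependence can be absorbed into a quantity controlled by $\Lambda$ and $M$ alone. Your additional remark that the bound degrades as $E\to 1^{+}$ but that this is compatible with the claimed dependence on $\Lambda$ and $M$ is a correct and worthwhile clarification, not a deviation.
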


\begin{remark}
Similar to the proof of $\kappa(B^{-1}_{k,\bar{\psi}}A_h)$, and through more elaborate analysis, we can obtain the estimate of $\kappa(B^{-1}_{k,\psi}A_h)$. Due to space limitation, we will not describe the detailed proof process.
\end{remark}

In the following, we will apply the above algorithmic and theoretical frameworks for Hermitian positive definite system to two typical models.

\section{TL-OS preconditioners for Poisson problem}\label{Poisson}
Let $\Omega$ be a bounded convex polyhedral region, we consider the following Poisson problem: find $u\in V(\Omega):=H^1_0(\Omega)$ such that
\begin{eqnarray}\label{model-elliptic}
a^{(1)}(u,v)=f(v),~~\forall v\in V(\Omega),
\end{eqnarray}
where $f\in L^2(\Omega)$ and real symmetric and positive definite bilinear functional
\begin{eqnarray*}
a^{(1)}(u,v):=\int_{\Omega}\rho(\bx)\nabla u \cdot \nabla v dx,~~u,v\in V(\Omega),
\end{eqnarray*}
positive function $\rho(\bx)\in L^\infty(\Omega)$ may be highly heterogeneous with very high contrast.

Let $\mathcal{T}_h$ be a simplex partition of $\Omega$ and $\mathcal{X}_h$ be the set of partition node, $\{\Omega_i\}^N_{i=1}$ and $\{\Omega'_i\}^N_{i=1}$ are separately the nonoverlapping and overlapping domain decomposition mentioned in section \ref{frameworks}. Define $V_{h,1}$ as the space consisting of continuous piecewise linear functions associated with $\mathcal{T}_h$  which vanishes on $\partial\Omega$.
Then the discrete system of \eqref{model-elliptic} corresponding to $V_{h,1}$ can be written as
\begin{eqnarray}\label{sys-elliptic}
A_{h,1} u_{h,1}=f_{h,1},~~~~u_{h,1}\in V_{h,1}.
\end{eqnarray}


\subsection{TL-OS preconditioner for linear finite element equation}\label{Poisson-TL-OS}
Define the partition of unity function $\theta_i(x)\in V_{h,1}$ with its values at any $\bx\in \mathcal{X}_h$ being
\begin{eqnarray*}
\theta_i(\bx)=
\left\{
\begin{array}{lll}
\frac{1}{|\mathcal{N}_{\bx}|}, & if ~ \bx\in \Omega'_i ~or~\bx\in \partial\Omega'_i\cap \partial\Omega \\
0,& otherwise
\end{array}
\right.~~i=1,\cdots,N,
\end{eqnarray*}
where $|\mathcal{N}_{\bx}|$ denote the number of elements in $\mathcal{N}_{\bx}=\{j: \bx\in \Omega'_j~or~\bx\in \partial\Omega'_j\cap \partial\Omega\}$. It is easy to see that $\{\theta_i(\bx)\}_{i=1}^N$ satisfy \eqref{theta-def}.

Let $\mathcal{S}^{(i)}$ be the index set defined by \eqref{S-i-M}, and we introduce the bilinear functional
\begin{eqnarray}\label{si-def0}
s^{(1)}_i(u,w)&=&\int_{\Omega_i} \rho(\bx) \Sigma_{l\in \mathcal{S}^{(i)}}|\nabla \theta_l(\bx)|^2 uwdx.
\end{eqnarray}

Notice that $\theta_j(\bx)$ is supportable, then we can check that $s^{(1)}_i(\cdot,\cdot)$ is a symmetric and positive definite bilinear functional which satisfies
\begin{eqnarray*}
a^{(1)}_i(\theta_j u,\theta_j u)
&\le&2(a^{(1)}_i(u, u)+s^{(1)}_i(u,u)),~~\forall u\in V_{h,1},
\end{eqnarray*}
namely, the ASSUMPTION \ref{assump-1-strong} holds.

Using $s^{(1)}_i(\cdot,\cdot)$ and $a^{(1)}_i(\cdot,\cdot)$, we can introduce the corresponding coarse space $V_{0,1}:=V_0$ defined by \eqref{zjb-def}.
Let the operators associated with $V_{h,1}$ and bilinear form $a^{(1)}(\cdot,\cdot)$ be $A_{j,1}, A_{0,1}$, $\Pi_{j,1}$ and $\Pi_{0,1}$(see the definitions \eqref{tildeA-def}, \eqref{tildeA0-def}, etc.) respectively,
and base on the general formula \eqref{B-def-new}, we can construct the TL-OS preconditioner for system \eqref{sys-elliptic} as follows
\begin{eqnarray}\label{B-def-elliptic}
B^{-1}_1 = \Sigma_{j=1}^N \Pi_{j,1} A_{j,1}^{-1}\Pi_{j,1}^*+\Pi_{0,1} A_{0,1}^{-1}\Pi_{0,1}^*.
\end{eqnarray}

For any $w\in C(\bar{\Omega})$, we define an interpolation $I_h: C(\bar{\Omega})\rightarrow V_{h,1}$ such that
\begin{eqnarray*}
I_h w\in V_{h,1}~\mbox{~and~}~I_hw(\bx) = w(\bx), ~~\bx\in \mathcal{X}_h.
\end{eqnarray*}
From the basic theory of finite element method, we know that when $\mathcal{T}_h$ is quasi-uniforming, there exists positive constants $C_{I_a}$ and $C_{I_s}$ which satisfy
\begin{eqnarray*}
a^{(1)}(I_h u, I_h u)_{\tau}&\le&  C_{I_a}a^{(1)}(u, u)_{\tau},~~\forall \tau\in \mathcal{T}_h,\\
\Sigma_{l=1}^N s^{(1)}_l(I_h u, I_h u)&\le& C_{I_s} \Sigma_{l=1}^N s^{(1)}_l( u, u),
\end{eqnarray*}
where $u\in H^1_0(\Omega)$. Namely, the ASSUMPTION \ref{assump-2} holds.

Then by THEOREM \ref{theorem-general}, we have
\begin{theorem}\label{theorem-elliptic}
If $\mathcal{T}_h$ is quasi-uniforming, the precondtioner $B^{-1}_1$ defined by \eqref{B-def-elliptic} satisfies
\begin{eqnarray*}
\kappa(B^{-1}_1A_{h,1})\le C,
\end{eqnarray*}
where the positive constant $C$ depends only on $\Lambda$ and $M$.
\end{theorem}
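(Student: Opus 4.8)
The plan is to apply the general framework result, namely \textbf{Theorem \ref{theorem-general}}, directly to the finite element setting of the second order elliptic problem. Since that theorem already gives $\kappa(B^{-1}A_h)\le C$ with $C$ depending only on $\Lambda$ and $M$ whenever ASSUMPTION \ref{assump-1-strong} and ASSUMPTION \ref{assump-2} both hold, the entire task reduces to verifying these two assumptions for the concrete choices $a^{(1)}(\cdot,\cdot)$, $s^{(1)}_i(\cdot,\cdot)$, the partition of unity $\{\theta_i\}$, and the nodal interpolant $I_h$ introduced in this subsection. Both verifications are in fact already carried out in the text preceding the theorem, so the proof itself is essentially a one-line invocation; what I would write is a clean assembly of those pieces.

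First I would confirm ASSUMPTION \ref{assump-1-strong}. The key algebraic identity is the product rule $\nabla(\theta_j u)=u\nabla\theta_j+\theta_j\nabla u$, so that $a^{(1)}_i(\theta_j u,\theta_j u)=\int_{\Omega_i}\rho|\nabla(\theta_j u)|^2\,dx$ is bounded, via $|x+y|^2\le 2|x|^2+2|y|^2$ and $0\le\theta_j\le 1$, by $2\int_{\Omega_i}\rho|\nabla\theta_j|^2 u^2\,dx+2\int_{\Omega_i}\rho\theta_j^2|\nabla u|^2\,dx\le 2(s^{(1)}_i(u,u)+a^{(1)}_i(u,u))$, where the first term is controlled by $s^{(1)}_i$ precisely because the definition \eqref{si-def0} sums $|\nabla\theta_l|^2$ over all $l\in\mathcal{S}^{(i)}$ and $j\in\mathcal{S}^{(i)}$. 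This gives $\tilde C=2$. I would also note that \eqref{inequality-Ih-s} holds for $s^{(1)}_i$ because multiplication by $\eta\in L^\infty(\bar\Omega_i)$ inside the weighted $L^2$-type norm $s^{(1)}_i$ is bounded by $\|\eta\|_{L^\infty}$.

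Next I would confirm ASSUMPTION \ref{assump-2} for the nodal interpolant. The elementwise energy stability $a^{(1)}(I_h u,I_h u)_\tau\le C_{I_a}a^{(1)}(u,u)_\tau$ and the weighted-mass stability $\sum_l s^{(1)}_l(I_h u,I_h u)\le C_{I_s}\sum_l s^{(1)}_l(u,u)$ are standard finite element interpolation estimates under the quasi-uniformity of $\mathcal{T}_h$, as stated in the excerpt; these are inverse/stability inequalities whose constants depend only on the shape regularity of the mesh and not on $\rho$ or on $h$. With both assumptions established, I would simply set $C_I=\max\{C_{I_a},C_{I_s}\}$ and invoke \textbf{Theorem \ref{theorem-general}} to conclude $\kappa(B^{-1}_1 A_{h,1})\le C$ with $C=C(\Lambda,M)$.

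The only genuine subtlety—and the step I expect to require the most care rather than the most computation—is ensuring that the constant produced by \textbf{Theorem \ref{theorem-general}} is truly independent of the high-contrast coefficient $\rho(\bx)$ and of the mesh size $h$. This robustness is not automatic: it hinges on the fact that the coefficient $\rho$ enters \emph{both} $a^{(1)}_i$ and $s^{(1)}_i$ with the same weight, so that the generalized eigenproblem \eqref{GEP-general} and the resulting bound $\tilde C=2$ are contrast-independent, and on the fact that the interpolation constants $C_{I_a},C_{I_s}$ are purely geometric. I would therefore stress in the write-up that $\tilde C=2$ is an absolute constant and that the quasi-uniformity hypothesis is exactly what decouples $C_{I_a},C_{I_s}$ from $\rho$ and $h$, so that the final bound $C$ inherits dependence only on the user threshold $\Lambda$ and the overlap count $M$, as claimed.
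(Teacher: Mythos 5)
Your proposal is correct and follows essentially the same route as the paper: the paper likewise verifies ASSUMPTION \ref{assump-1-strong} (obtaining $\tilde{C}=2$ from the product rule and the definition \eqref{si-def0} of $s^{(1)}_i$) and ASSUMPTION \ref{assump-2} (standard nodal interpolation stability under quasi-uniformity), and then concludes by invoking THEOREM \ref{theorem-general}. Your added remarks on contrast-independence simply make explicit what the paper leaves implicit.
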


Because the calculation of coarse basis (see \eqref{psi-sub2-new} or \eqref{psi-sub2}) takes up a large proportion in the preconditioner formula defined in \eqref{B-def-elliptic}, 
%
%
and the coefficient matrices of \eqref{psi-sub2-new} and \eqref{psi-sub2} are separately sparse and block dense, we can use fast solvers with lower computational complexity, e.g. AMG, for solving \eqref{psi-sub2-new}, but usually direct solvers with higher computational complexity for solving \eqref{psi-sub2}. Therefore we often choose \eqref{psi-sub2-new} as the coarse basis formula. Further experiments also show that the accuracy of solving the problem \eqref{psi-sub2-new} does not need to be too high.

Let $\Omega=(0,1)^3$, $f(\bx)=3\pi^2\sin(\pi x_1)\sin(\pi x_2)\sin(\pi x_3)$, we consider the following coefficient distributions in Poisson problem \eqref{model-elliptic}
\begin{eqnarray}\label{coef-1}
    \rho(\bx)=\left\{\begin{array}{cc}
      10^{\mu_1}, &~~~\bx\in(\frac{1}{4},\frac{2}{4})\times (0,\frac{2}{4})\times (0,\frac{1}{4})\cup(\frac{1}{4},\frac{2}{4})^3 \\
      10^{\mu_2}, &~~~\bx\in(\frac{1}{4},\frac{2}{4})^3,\\
      1,&\text{otherwise}
    \end{array}\right., ~~\mu_1,\mu_2\ge 0
  \end{eqnarray}
and
\begin{eqnarray}\label{coef-2}
\rho(\bx)|_{\tau}\in(10^{-\frac{\mu}{2}},10^{\frac{\mu}{2}}),~~\forall \tau\in\mathcal{T}_h.
\end{eqnarray}
Denote the problem \eqref{model-elliptic} corresponding to $\mu_1=\mu_2=0$, $\mu_1=\mu, \mu_2=0$ and $\mu_1=0, \mu_2=\mu$ in \eqref{coef-1} as $Model_1$-$Model_3$ respectively, and the problem \eqref{model-elliptic} related to \eqref{coef-2} as $Model_4$.

%
%
%

The resulting system \eqref{sys-elliptic} corresponding to $Model_1$-$Model_4$ is solved by the PCG method with preconditioner $B^{-1}_1$ defined by \eqref{B-def-elliptic}. Let $\mathcal{T}_h=n(m)$ be a conforming triangulation with subdomain size $H=1/n$ and mesh size $h=1/(nm)$(see \cite{HSW2010} for more details). Fig. \ref{grid-nm} and Fig. \ref{Fig1:nonoverlapping-subdomain} show  $\mathcal{T}_h=5(5)$ and some nonoverlapping subdomain $\Omega_i$ in two dimension. In addition, extending each $\Omega_i$ with width $\delta=lh$ where $l$ denotes the number of layers in $\mathcal{T}_h$, we get the overlapping subdomain $\Omega'_{i}:=\Omega^{(i)}_{l,h}$, Fig. \ref{Fig2:overlapping-parts} lists some $\Omega^{(i)}_{1,h}$. Set $\Lambda=1+\log H/h$, the PCG method is stopped when the relative residual is reduced by the factor of $tol=10^{-6}$, and we call AMG for solving the variational problem \eqref{psi-sub2-new} related to the coarse basis. 
\begin{figure}[h]
\centering
\subfigure[]{\label{grid-nm}
\includegraphics[width=1.0in]{./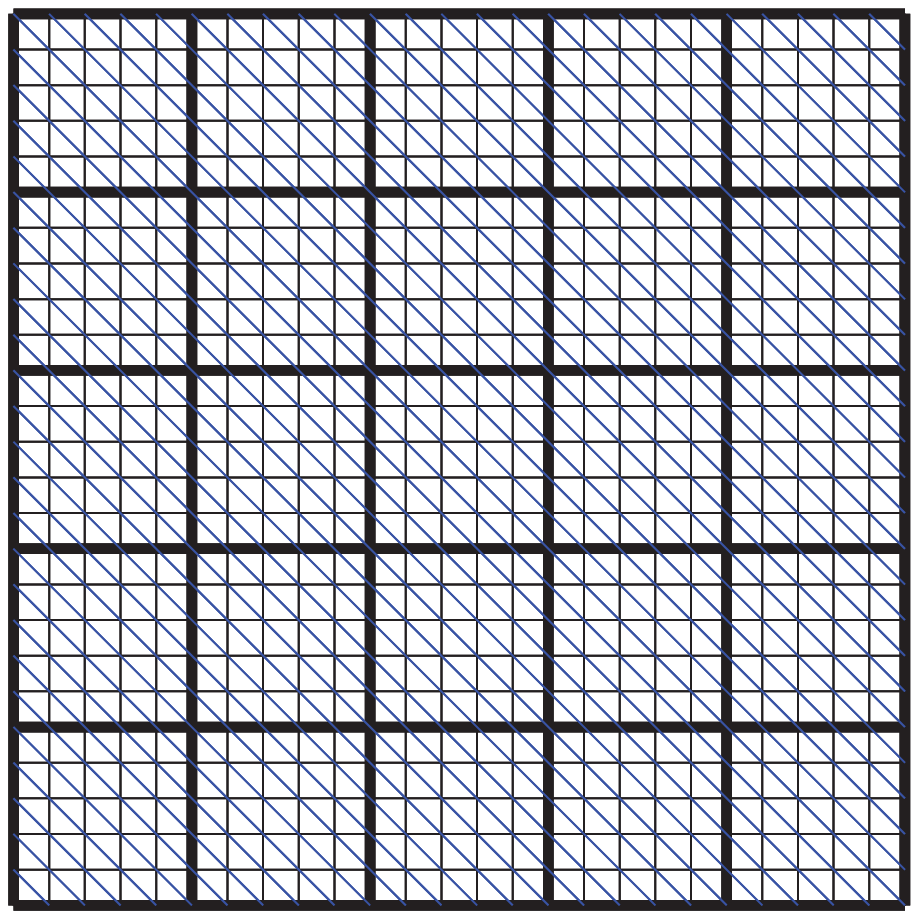}}
\hspace{0.001in}
\subfigure[]{\label{Fig1:nonoverlapping-subdomain}
\includegraphics[width=1.0in]{./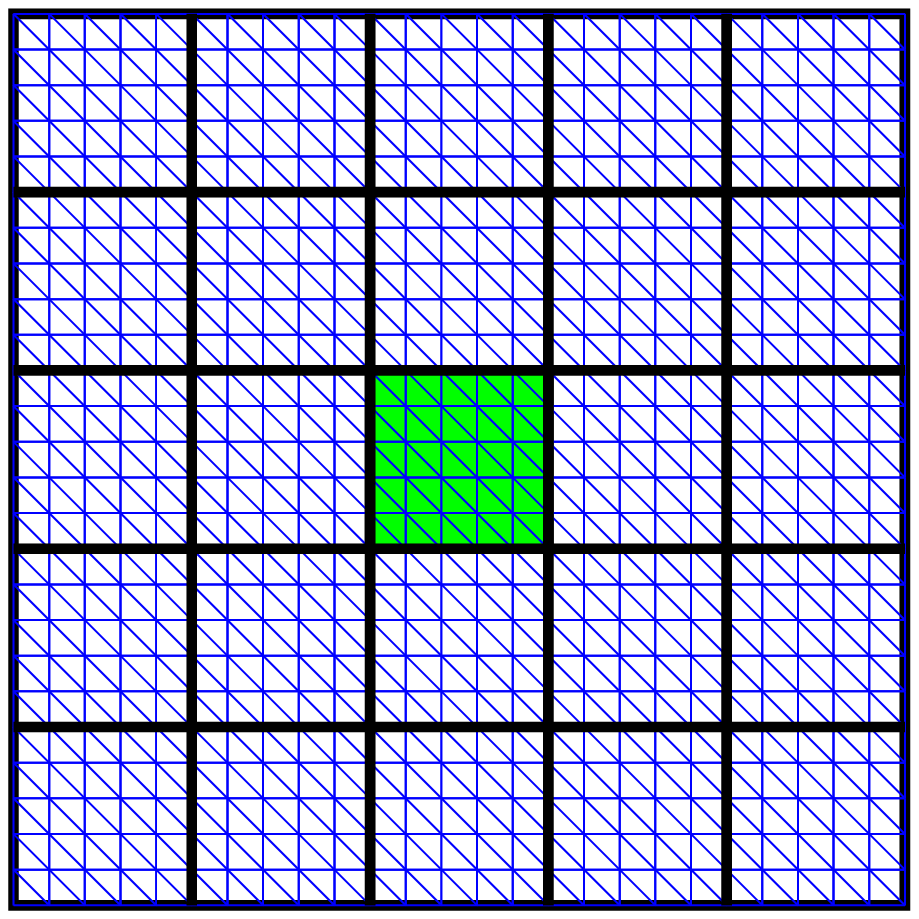}}
\hspace{0.001in}
\subfigure[]{ \label{Fig2:overlapping-parts}
\includegraphics[width=1.0in]{./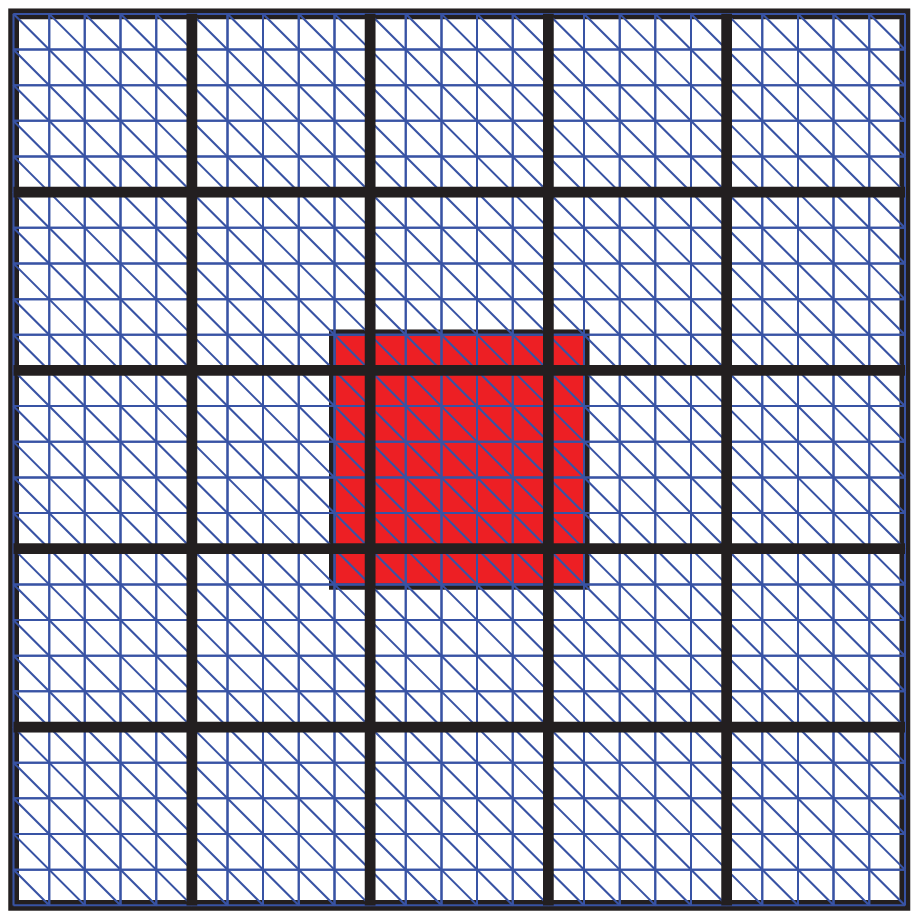}}
\hspace{0.001in}
{\caption{(a)~$\mathcal{T}_h=5(5)$, (b)~The green domain is $\Omega_i$, (c)~The red domain is $\Omega'_{i}:=\Omega^{(i)}_{l,h}$ with $l=1$ .}
\label{Fig:submesh-2}}
\end{figure}
The dependence of the iteration counts on $l$ is listed in TABLE \ref{A-48-k-new} with fixed $n(m)=4(8)$ and exponent $\mu=6$ in the coefficient $\rho(\bx)$, the data in and out of brackets indicate the number of TL-OS preconditioned PCG iterations when the AMG iterative control accuracy $tol_A$ is chosen as $10^{-10}$ and $10^{-1}$ respectively, it can be seen that they are nearly the same. Therefore, we take $tol_A=10^{-1}$ in the following numerical experiments. we also see from TABLE \ref{A-48-k-new} that the iteration numbers are nearly independent of $l$. Then for a fixed $l=1$, TABLE \ref{rho-distribution-nm-468} reported the results with varying $n(m)$ and $\mu$. We observe that the number of iterations is almost independent of the mesh size, jump coefficient distribution and jump range.

Noticing $M\le 27$, we find from the above results that the iteration counts are independent of the overlapping width $\delta$(or $l$), mesh size $h$ and coefficient $\rho(\bx)$ when $\Lambda=1+H/h$ is fixed, and the iteration counts increase slightly when $\Lambda$ grows which verify THEOREM \ref{theorem-elliptic}.


\begin{table}[h]
\footnotesize \caption{~$n(m)=4(8)$, $\mu=6$ }
\label{A-48-k-new} \centering
\begin{tabular}{|ccccc|}
  \hline
  ~~$l$  ~& $Model_1$~~&$Model_2$~~ & $Model_3$~~& $Model_4$~~ \\
 \hline
        1     &  17(17)  & 17(17) &17(17) &19(19)       \\
        2     &  16(16)  & 17(17) &17(17) &19(19)    \\
        3     &  18(17)  & 18(18) &19(19) &18(18)     \\
        4     &  16(15)  & 17(17) &17(17) &18(18)  \\
        \hline
\end{tabular}
\end{table}

\begin{table}[h]
\footnotesize\centering\caption{$l=1$}
\label{rho-distribution-nm-468}\vskip 0.1cm
\begin{tabular}{{|c|cccc|ccc|ccc|}}\hline
\multirow{2}{*}{$n(m)$}& & \multicolumn{3}{c|}{$Model_2$} & \multicolumn{3}{c|}{$Model_3$} & \multicolumn{3}{c|}{$Model_4$}  \\\cline{2-11}
                      & $\mu=$ &4     & 6  & 8  &4  &6  &8  &4  &6    & 8\\       \hline
4(4)                  &       & 17  &17  &17  &17 &17 &17 &18   &18   &19     \\  
4(6)                  &       & 17  &17  &17  &17 &17 &17 &19   &20   &20   \\ \hline\hline
8(4)                  &       & 17  &16  &16  &16 &18 &19 &19   &19   &19  \\ 
8(6)                  &       & 16  &16  &16  &16 &16 &16 &19   &19   &19         \\ \hline
\end{tabular}
\end{table}

In next subsection, we will discuss the economical TL-OS precondioners. 

\subsection{Economical precondioners and numerical experiments}\label{Poisson-economical}

Similar to the introduction of TL-OS preconditioner \eqref{B-def-elliptic} in subsection \ref{Poisson-TL-OS}, and using the construction framework of the two economical precondionters in subsection \ref{framework-economical}, we can give two economical TL-OS precondionters for \eqref{sys-elliptic} as follows
\begin{eqnarray}\label{precond-ec-Poisson}
(B^{(1)}_{k,\zeta})^{-1} = \Sigma_{j=1}^N \Pi_{j,1} A_{j,1}^{-1}\Pi_{j,1}^*+\Pi^{k,\zeta}_{0,1} (A^{k,\zeta}_{0,1})^{-1}(\Pi^{k,\zeta}_{0,1})^*,~~\zeta=\psi, \bar{\psi},
\end{eqnarray}
where $\Pi^{k,\zeta}_{0,1}: V^{k,\zeta}_{0,1}\rightarrow V_{h,1}$ are identical lifting operators, the coarse space operator $A^{k,\zeta}_{0,1}: V^{k,\zeta}_{0,1}\rightarrow V^{k,\zeta}_{0,1}$ satisfies $(A^{k,\zeta}_{0,1}u,v)=a^{(1)}(u,v), \forall u,v\in V^{k,\zeta}_{0,1}$,
and $V^{k,\zeta}_{0,1}\subset V_{h,1}$ here is the corresponding economical coarse space.

In the following, we will estimate the condition number $\kappa((B^{(1)}_{k,\bar{\psi}})^{-1}A_h)$ by using the theoretical framework in subsection \ref{framework-economical}. Noting that in subsection \ref{Poisson-TL-OS}, we have checked that the ASSUMPTION \ref{assump-1-strong} and \ref{assump-2} hold, so we need only to prove that the ASSUMPTION \ref{assump-3} holds now.

Firstly, we assume that for a given constant $C_{\rho}$, the triangulation $\mathcal{T}_h$ is fine enough to 
satisfy that
\begin{eqnarray}\label{max-min-rho}
\frac{\max_{\bx\in \tau} \rho(\bx)}{\min_{\bx\in \tau} \rho(\bx)}\le C_{\rho},~~\forall \tau\in \mathcal{T}_h.
\end{eqnarray}

Let $\Omega_{i}^{INT}=\{\bx\in \Omega\big| \theta_i(\bx)=1\}$, Fig. \ref{subdomain-interior} shows
the schematic figure of $\Omega_{i}^{INT}$ in two dimension.
Notice that $\|\cdot\|^2_{s_i}:=s^{(1)}_i(\cdot,\cdot)$, and using \eqref{max-min-rho}, we can easily obtain
\begin{eqnarray*}
C_p=C_{\rho}^2\frac{\max_{l} (\max_{\tau\in \Omega_l \backslash\Omega_l^{INT}}(\min_{\bx\in \tau} (\rho(\bx)\sum_{m\in n(l)}|\nabla\theta_m(\bx)|^2)))}{\min_{\tau\in \mathcal{T}_h}(\max_{\bx\in\tau}\rho(\bx))}.
\end{eqnarray*}

From this, we see that $C_{p}$ depends on the coefficient $\rho(\bx)$, e.g. for $Model_3$ in one dimension, we can prove $C_{p}=O(10^{\mu})$.
As a result, by THEOREM \ref{theorem-1-new} and \eqref{C0}, in order to get an appropriate condition number, the number of layers $k$ in $\Omega_{k,H}^{(i)}$ should be increased with the increase of $\mu$ which raises the computational cost of the coarse basis. Through an in-depth analysis, we find that the above theoretical results can be substantially improved when $\rho(\bx)$ satisfies a certain distribution. Now, we introduce the special distribution condition which the coefficient $\rho(\bx)$ satisfies
\begin{assumption}\label{cond-rho-Omega}
For a given subdomain $\Omega_i$ which does not touch the boundary $\partial\Omega$, we assume that
there exists an index set $\mathcal{S}_i=\{i,i_1,\cdots,i_{\check{n}_i}\}\subset \{1,\cdots,N\}$ such that $\bar{\Omega}_i\cup\bar{\Omega}_{i_1}\cup\cdots\cup \bar{\Omega}_{i_{\check{n}_i}}(\check{n}_i\geq 1)$ satisfies  $\bar{\Omega}_i\cup\bar{\Omega}_{i_1}\neq \emptyset$, $\bar{\Omega}_{i_j}\cap \bar{\Omega}_{i_{j+1}}\neq \emptyset(j=1,\cdots, \check{n}_i-1)$,
$\partial\Omega_{i_{\check{n}_i}}\cap\partial\Omega\neq \emptyset$. And there exists a constant $\check{C}>0$ such that
$\frac{\max_{j\in\mathcal{S}_i}\max_{\bx\in\bar{\Omega}_j}\rho_j}{\min_{j\in\mathcal{S}_i}\min_{\bx\in\bar{\Omega}_j}\rho_j}\leq \check{C}$
where $\rho_j=\rho|_{\Omega_j}$ and $\check{C}>0$ is independent of the jump of $\rho(\bx)$ in $\Omega$.
\end{assumption}

\begin{lemma}
Under ASSUMPTION \ref{cond-rho-Omega}, and for any $u_0\in V_{0,1}$, we have
\begin{eqnarray}\label{lemma6-7-a-new}
\Sigma_{l=1}^N\| \pi_lu_0\|_{s_l}^2 \le  C_p|u_0|_{a(\Omega)}^2,
\end{eqnarray}
where the positive constant $C_p$ is independent of $\rho(\bx)$.
\end{lemma}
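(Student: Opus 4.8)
The plan is to prove the inequality \eqref{lemma6-7-a-new} by establishing ASSUMPTION \ref{assump-3}, i.e. by producing an explicit constant $C_p$ that is independent of the jump range of $\rho(\bx)$ under the special distribution ASSUMPTION \ref{cond-rho-Omega}. The essential observation is that \eqref{Poincare} is a \emph{weighted} inequality comparing the $s_l$-seminorms of $\pi_l u_0$ against the energy $|u_0|_{a(\Omega)}^2$, and the only reason the naive bound (the displayed $C_p = C_\rho^2 \cdots$ formula) blows up with $\mu$ is that a high value of $\rho$ in some interior island $\Omega_i$ is not controlled by the energy of $u_0$ on its own subdomain. ASSUMPTION \ref{cond-rho-Omega} provides a \emph{chain} of subdomains $\bar{\Omega}_i \cup \bar{\Omega}_{i_1} \cup \cdots \cup \bar{\Omega}_{i_{\check n_i}}$ connecting each bad interior subdomain $\Omega_i$ to the physical boundary $\partial\Omega$, along which the coefficient ratio stays bounded by $\check C$. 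The idea is to transfer (``percolate'') the energy estimate along this chain to the boundary, where the homogeneous Dirichlet condition pins down $u_0$ and supplies a genuine Poincar\'e-type control.

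First I would localize: since $s_l^{(1)}(\cdot,\cdot)$ from \eqref{si-def0} is supported in $\Omega_l$ and weighted by $\rho(\bx)\sum_{m\in\mathcal{S}^{(l)}}|\nabla\theta_m|^2$, I would bound $\|\pi_l u_0\|_{s_l}^2$ first by $\|u_0\|_{s_l}^2$ using the boundedness of the projection $\pi_l$ (already used repeatedly, e.g. in LEMMA \ref{lemma-8}), and then observe that $|\nabla\theta_m|$ is concentrated in the overlap annulus $\Omega_l\backslash\Omega_l^{INT}$ with $|\nabla\theta_m| = O(1/\delta)$ there. This reduces the task to bounding $\int_{\Omega_l\backslash\Omega_l^{INT}}\rho |u_0|^2\,dx$ by the local Dirichlet energy. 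For subdomains touching $\partial\Omega$, a standard local Poincar\'e--Friedrichs inequality gives $\int \rho|u_0|^2 \lesssim (\text{const})\,|u_0|_{a(\Omega_l)}^2$ directly, because $u_0$ vanishes on $\partial\Omega$; the constant here is $\rho$-independent after factoring out the local weight. For an interior subdomain $\Omega_i$, the plan is to apply the chain in ASSUMPTION \ref{cond-rho-Omega}: iterate a subdomain-to-neighbour Poincar\'e estimate across $\Omega_i, \Omega_{i_1},\dots,\Omega_{i_{\check n_i}}$, at each step paying only the \emph{ratio} $\rho_j/\rho_{j+1}$, which telescopes to at most $\check C$, and terminating at the boundary-touching subdomain $\Omega_{i_{\check n_i}}$ where Friedrichs applies. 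Summing the resulting local bounds over $l$ and using that each subdomain appears in a bounded number of chains (controlled by $M$), I would collect everything into a single constant $C_p$ depending only on $\check C$, $M$, $C_\rho$, and mesh-quasiuniformity constants, but crucially \emph{not} on the jump magnitude $10^{\mu}$.

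The hard part will be the chain/percolation step: making the ``transfer of energy along neighbouring subdomains'' rigorous. The difficulty is that a pure Poincar\'e inequality on a single subdomain only controls $\|u_0 - \bar u_0\|$ against $|u_0|_a$ up to the subdomain average $\bar u_0$, so moving from $\Omega_{i_j}$ to $\Omega_{i_{j+1}}$ requires comparing averages on overlapping subdomains and absorbing the coefficient ratio exactly once per edge of the chain, without letting the constants multiply into something $\mu$-dependent. I expect to need the hypotheses $\bar\Omega_{i_j}\cap\bar\Omega_{i_{j+1}}\neq\emptyset$ precisely to glue the averages, and the bound $\rho_j/\rho_{j+1}\le\check C$ (uniform over the chain) to keep the telescoped product of ratios under control by $\check C$ rather than by the global contrast. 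A secondary technical point is handling the overlap-annulus weight $\sum_m|\nabla\theta_m|^2$ uniformly; here I would invoke the quasiuniformity of $\mathcal{T}_h$ and \eqref{max-min-rho} so that on each element the weight $\rho|\nabla\theta|^2$ is comparable to its element-wise extremal value, which is where the factor $C_\rho$ (but not the jump) enters. Once the chain estimate is set up, the final assembly into \eqref{lemma6-7-a-new} is a routine summation, so I would not dwell on it.
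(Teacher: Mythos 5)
You take the same overall approach as the paper: both proofs reduce the left-hand side via $\|\pi_l u_0\|_{s_l}\le\|u_0\|_{s_l}$, use \eqref{si-def0} to localize the piecewise-constant weight $\rho\,\Sigma_m|\nabla\theta_m|^2$ to the annulus $\Omega_l\backslash\Omega_l^{INT}$ and pull it out as a constant $C(\theta)$, and then control $\Sigma_l\int_{\Omega_l\backslash\Omega_l^{INT}}\rho u_0^2\,dx$ by exploiting the chain in ASSUMPTION \ref{cond-rho-Omega} to reach $\partial\Omega$, where $u_0$ vanishes. The genuine difference is how that last step is executed, and it is exactly the part you flag as hard. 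You propose an iterative percolation: a local Poincar\'e estimate on each link, gluing of averages across $\bar{\Omega}_{i_j}\cap\bar{\Omega}_{i_{j+1}}$, and telescoping of adjacent coefficient ratios. The paper shows this machinery is unnecessary: since ASSUMPTION \ref{cond-rho-Omega} bounds $\max\rho/\min\rho$ over the \emph{whole} chain union $\check{\Omega}_l=\bar{\Omega}_l\cup\bar{\Omega}_{l_1}\cup\cdots\cup\bar{\Omega}_{l_{\check{n}_l}}$ (not merely between neighbours), one may pull $\max_{k\in\mathcal{S}_l}\rho_k$ out of $\int_{\check{\Omega}_l}u_0^2\,dx$, apply the Friedrichs--Poincar\'e inequality a single time on the connected region $\check{\Omega}_l$, which touches $\partial\Omega$, and then reinsert $\rho$ against $|\nabla u_0|^2$ at the cost of one factor $\check{C}$, yielding $C_p=\check{C}\,C(\theta)$ (up to the multiplicity with which the regions $\check{\Omega}_l$ overlap, a factor the paper's final equality silently absorbs). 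Your route, if carried through, proves the same $\rho$-independence with a constant that additionally involves chain lengths and the number of chains passing through a given subdomain, so it is valid but buys nothing here beyond extra technical work; the one simplification worth importing from the paper is that no elementwise comparison via $C_\rho$ from \eqref{max-min-rho} is needed in this lemma, because the weight $\Sigma_m|\nabla\theta_m|^2$ is already constant on each element.
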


\begin{proof} Using the definition \eqref{si-def0} of $s^{(1)}_i(\cdot,\cdot)$ and noting that  $\sum_{m\in n(l)}|\nabla \theta_m|^2$ is constant in each $\tau$ in $\mathcal{T}_h$, we have
  \begin{eqnarray}\nonumber
  \Sigma_{l=1}^N\|\pi_lu_0\|_{s_l}^2&=&\Sigma_{l=1}^N\|u_0\|_{s_l}^2= \Sigma_{l=1}^N\Sigma_{\tau\in \Omega_l \backslash \Omega_l^{INT}}\int_{\tau} \rho(\bx)\Sigma_{m\in S^{(l)}}|\nabla \theta_m|^2 u_0^2dx \\
    &\leq& C(\theta)\Sigma_{l=1}^N \int_{ \Omega_l \backslash \Omega_l^{INT}} \rho(\bx) u_0^2dx, \label{lemma67220}
      \end{eqnarray}
where $C(\theta)=\Sigma_{l} (\Sigma_{\tau\in \Omega_l \backslash \Omega_l^{INT}} (\Sigma_{m\in S^{(l)}}|\nabla \theta_m|^2)|_{\tau})$.

Furthermore, by ASSUMPTION \ref{cond-rho-Omega} and Poincar$\acute{e}$ inequality, we obtain
\begin{eqnarray}\nonumber
\Sigma_{l=1}^N\int_{\Omega_l\backslash \Omega_l^{INT}} \rho(\bx) u_0^2dx
&\le&\Sigma_{l=1}^{\check{m}} \max_{k\in \mathcal{S}_l}\rho_k \int_{\check{\Omega}_l} u_0^2 dx \le\check{C} \Sigma_{l=1}^{\check{m}} \int_{\check{\Omega}_l} \rho(\bx) |\nabla u_0|^2 dx=\check{C}|u_0|_{a(\Omega)}^2.
\end{eqnarray}

Substituting this into \eqref{lemma67220} leads to \eqref{lemma6-7-a-new} with 
$C_p=\check{C} C(\theta)$.
\end{proof}

By the above lemma and THEOREM \ref{theorem-1-new}, we have
\begin{theorem}\label{theorem-2}
Assume that $\rho(\bx)$ satisfies ASSUMPTION \ref{cond-rho-Omega} and $k\ge 2$, we have
\begin{eqnarray*}
\kappa((B^{(1)}_{k,\bar{\psi}})^{-1}A_{h,1})\le C,
\end{eqnarray*}
where the positive constant $C$ is independent of $\rho(\bx)$.
\end{theorem}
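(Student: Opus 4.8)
The plan is to derive Theorem \ref{theorem-2} directly from the abstract condition number estimate in THEOREM \ref{theorem-1-new}, so that the entire task reduces to checking that its three hypotheses hold for the linear finite element discretization \eqref{sys-elliptic} and, decisively, that the resulting bound can be taken independent of $\rho(\bx)$. Two of the three assumptions require no fresh work: in subsection \ref{Poisson-TL-OS} it is already verified that the functional $s^{(1)}_i(\cdot,\cdot)$ of \eqref{si-def0} makes ASSUMPTION \ref{assump-1-strong} hold with $\tilde{C}=2$, and that the nodal interpolation $I_h$ satisfies ASSUMPTION \ref{assump-2} with constants $C_{I_a},C_{I_s}$ governed only by the quasi-uniformity of $\mathcal{T}_h$. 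First I would observe that neither $\tilde{C}$ nor $C_I=\max\{C_{I_a},C_{I_s}\}$ carries any dependence on the coefficient $\rho(\bx)$, so they contribute nothing to the contrast scaling.

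The only remaining hypothesis is ASSUMPTION \ref{assump-3}, and this is precisely where ASSUMPTION \ref{cond-rho-Omega} enters. Here I would invoke the preceding lemma, which establishes the Poincar\'e-type bound \eqref{lemma6-7-a-new}, namely $\Sigma_{l=1}^N\|\pi_l u_0\|_{s_l}^2 \le C_p |u_0|_{a(\Omega)}^2$ for all $u_0\in V_{0,1}$, with $C_p=\check{C}\,C(\theta)$. The crucial point is that this $C_p$ is independent of $\rho(\bx)$: the chain condition in ASSUMPTION \ref{cond-rho-Omega} bounds the ratio of coefficient values along a path of neighbouring subdomains connecting $\Omega_i$ to $\partial\Omega$ by the $\rho$-free constant $\check{C}$, which is exactly what is needed to pass from the $\rho$-weighted boundary-layer norm to the full $a^{(1)}$-energy through a Poincar\'e inequality without picking up the jump magnitude. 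This stands in sharp contrast with the generic situation discussed after the lemma, where $C_p$ can scale like $O(10^{\mu})$ and would force the layer number $k$ to grow with the contrast. Thus ASSUMPTION \ref{assump-3} holds with a contrast-independent $C_p$.

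With all three assumptions in place, applying THEOREM \ref{theorem-1-new} yields $\kappa((B^{(1)}_{k,\bar{\psi}})^{-1}A_{h,1})\lesssim 2MC_2$. The final step is bookkeeping on the right-hand side: by \eqref{CondA2-new} and \eqref{C0} we have $C_2\le \tilde{C}_4(\Lambda+C_0)$ with $\tilde{C}_4=6+4C_I\tilde{C}M$ and $C_0\le \tilde{C}_3\Lambda^2(2k+1)^d E^{1-\frac{k}{2}}(1+C_p)$, where $\tilde{C}_3=2\tilde{C}_1\tilde{C}_2$ and $E=((\hat{C}(1+\Lambda))^{-1}+1)$ are assembled only from $\tilde{C}$, $C_I$, $M$ and $\Lambda$. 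Since every one of these ingredients, together with $C_p$, is independent of $\rho(\bx)$, so is the final bound $C$, which proves the claim.

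The main obstacle is conceptual rather than computational, and it has effectively been absorbed into the preceding lemma: the real content is recognizing that ASSUMPTION \ref{cond-rho-Omega} is the right structural hypothesis to force $C_p$ to be contrast-independent. For Theorem \ref{theorem-2} itself the only care needed is to confirm that none of the constants $\tilde{C}$, $C_I$, $E$ and $\tilde{C}_1$--$\tilde{C}_4$ inherited from the abstract framework secretly depend on $\rho(\bx)$, so that the $\rho$-independence of $C_p$ propagates cleanly through \eqref{C0} and \eqref{CondA2-new} into the condition number bound.
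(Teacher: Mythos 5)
Your proposal is correct and takes essentially the same route as the paper: the paper's proof of THEOREM \ref{theorem-2} is precisely to combine the preceding lemma (which establishes ASSUMPTION \ref{assump-3} with $C_p=\check{C}\,C(\theta)$ independent of $\rho(\bx)$ under ASSUMPTION \ref{cond-rho-Omega}) with THEOREM \ref{theorem-1-new}, the first two assumptions having already been verified in subsection \ref{Poisson-TL-OS}. Your explicit bookkeeping that $\tilde{C}$, $C_I$, $E$ and $\tilde{C}_1$--$\tilde{C}_4$ carry no dependence on $\rho(\bx)$, so that the $\rho$-independence of $C_p$ propagates through \eqref{C0} and \eqref{CondA2-new}, is exactly the implicit content of the paper's one-line argument.
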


For $Model_2$ that satisfies ASSUMPTION \ref{cond-rho-Omega} and $Model_3$ that does not satisfy ASSUMPTION \ref{cond-rho-Omega}, we present the numerical results of solving discrete system \eqref{sys-elliptic} by PCG method based on preconditioner $(B^{(1)}_{k,\bar{\psi}})^{-1}$ defined by \eqref{precond-ec-Poisson}, where we choose MUMPS to solve \eqref{psi-ec-II}, and the other parameters are selected the same as subsection \ref{Poisson-TL-OS}.

For a fixed $l=1$, we present the iteration number in TABLE \ref{ec-AS-468} as $n(m)$, $\mu$ and $k$(related to the computational domain $\Omega_{k,H}^{(i)}$ of the coarse basis) varying. We observe that the number of iterations corresponding to $Model_2$ does not depend on the jump range while $Model_3$ does, which confirm the estimate in THEOREM \ref{theorem-2}.
\begin{table}[h]
\footnotesize\centering\caption{$l=1$,$\mu=4,6,8$}
\label{ec-AS-468}\vskip 0.1cm
\begin{tabular}{{|c|cccc|ccc|}}\hline
\multirow{2}{*}{$n(m)$}&   & \multicolumn{3}{c|}{$Model_2$}  & \multicolumn{3}{c|}{$Model_3$}  \\\cline{2-8}
                       &$k=$  & 1        & 2   & 3   & 1   & 2   & 3\\       \hline
8(4)                   &      &24(21,21) &17(17,17)  &16(16,16) &31(36,43)   &18(18,30)  &17(18,28)  \\
12(4)                   &      &28(27,26) &18(17,17)  &16(16,16) &36(42,49)   &28(34,41)  &18(18,25) \\ \hline
\end{tabular}
\end{table}


In the following experiments, 
we apply PCG method with preconditioner $(B^{(1)}_{k,\psi})^{-1}$ defined by \eqref{precond-ec-Poisson} which possesses lower computational complexity for solving the system \eqref{sys-elliptic} associated with $Model_1-Model_4$, and AMG is used to solve \eqref{psi-ec-I}. The parameters are selected the same as subsection \ref{Poisson-TL-OS}.

In TABLE \ref{ec-A-nm-88},  for fixed $n(m)=8(8)$ and $\mu=6$, we investigate the varying of the iteration number as $l$ and $k$ changing. The results demonstrate that for $Model_i(i=1,2,4)$, the number of iterations almost does not depend on $l$ and $k$, but for $Model_3$, the number of iterations is stable at $k\ge2$.
\begin{table}
\footnotesize\centering\caption{$n(m)=8(8),\mu=6$}
\label{ec-A-nm-88}\vskip 0.1cm
\begin{tabular}{{|c|cccc|ccc|ccc|ccc|}}\hline
\multirow{2}{*}{$l$}&  & \multicolumn{3}{c|}{$Model_1$} & \multicolumn{3}{c|}{$Model_2$}  & \multicolumn{3}{c|}{$Model_3$} & \multicolumn{3}{c|}{$Model_4$}  \\\cline{2-14}
                    &$k=$ &1   & 2   & 3 &1  & 2   & 3&1  & 2   & 3 &1  & 2   & 3\\       \hline
1                   &   & 18 &17 &17 &18&17&17&35&19 &17 &22  &20 &20    \\  
2                   &   & 16 &16 &15 &19&17&16&34&21 &19 &21  &20 &20  \\
3                   &   & 19 &18 &18 &21&18&18&37&19 &19 &19  &18 &18 \\ 
4                   &   & 18 &17 &17 &20&18&18&35&19 &19 &19  &18 &18  \\ \hline
\end{tabular}
\end{table}

In TABLE \ref{ec-A-nm-468-mu-468}, for the case of $l=1$, we investigate the phenomena with the change of $n(m)$, $k$ and $\mu$, where the three data outside and inside the brackets represent the number of iterations when $\mu=4,6,8$, respectively. We can see from TABLE \ref{ec-A-nm-468-mu-468} that for $Model_2$ and $Model_4$, the number of iterations is almost independent of $n(m)$, $k$ and $\mu$, while $Model_3$ is weakly dependent on these factors. In addition, comparing with TABLE \ref{ec-AS-468}, we find that the iteration number of PCG method based on preconditioner $(B^{(1)}_{k,\psi})^{-1}$ is more stable than preconditioner $(B^{(1)}_{k,\bar{\psi}})^{-1}$ for $Model_3$.
\begin{table}[h]
\footnotesize\centering\caption{$l=1, \mu=4,6,8$}
\label{ec-A-nm-468-mu-468}\vskip 0.1cm
\begin{tabular}{{|c|ccc|cc|cc|}}\hline
\multirow{2}{*}{$n(m)$}&   & \multicolumn{2}{c|}{$Model_2$}  & \multicolumn{2}{c|}{$Model_3$} & \multicolumn{2}{c|}{$Model_4$}  \\\cline{2-8}
                       &$k=$  & 2        & 3   & 2   & 3   & 2   & 3\\       \hline
8(6)                   &      &17(16,17) &16(16,16)  &18(18,18) &17(18,18)   &20(20,20)  &19(19,19)  \\
8(8)                   &      &17(17,17) &17(17,17)  &18(19,19) &17(17,17)   &20(20,20)  &19(20,19) \\ \hline\hline
8(4)                   &      &17(17,17) &16(16,16)  &20(20,19) &19(19,18)   &19(19,20)  &19(19,20)  \\
12(4)                  &      &18(17,17) &16(16,16)  &20(20,20) &23(22,18)   &19(20,20)  &18(19,19) \\ \hline
\end{tabular}
\end{table}

In summary, when $k \ge 2$, the iteration counts of PCG method based on $(B^{(1)}_{k,\psi})^{-1}$ are nearly independent of the mesh size, overlapping width, jump coefficient distribution and jump range, moreover, $(B^{(1)}_{k,\psi})^{-1}$ is more efficient than $(B^{(1)}_{k,\bar{\psi}})^{-1}$.

\section{TL-OS preconditioners for plane wave discretization of Helmholtz equations}\label{Helmholtz}
Consider the following Helmholtz equations
\begin{eqnarray}\label{chap4-model equation}
\left\{
\begin{array}{rcll}
 -\Delta u  - \kappa^2 u &=&  0,        &in~ \Omega,\\
 (\partial_{\bf n} + i \kappa )u &=&  g,    &    on~ \partial \Omega,
\end{array}
\right.
\end{eqnarray}
where $i=\sqrt{-1}$, $\partial_{\bf n}$ and $\kappa=\frac{\omega}{c}>0$ are separately the imaginary unit, the outer normal derivative, and the wave number, in which $\omega$ and $c$ are separately called the angular frequency and the wave speed, $g\in L^2(\partial \Omega)$(refer to \cite{HY2014}). 

Let $\mathcal{T}_h$ be a uniform hexahedral(or quadrilateral) isometric partition of $\Omega$(see Fig. \ref{fig-dd-Th} for illustration in two dimension), i.e.
$\bar{\Omega} = \bigcup_{k=1}^{N_h} \bar{\tau}_k,$
where $N_h$ and $h$ are separately the number of elements and the mesh size, the elements $\{\tau_k\}$ satisfy that $\tau_m \cap \tau_l = \emptyset, m \neq l$. Denote $\gamma_{kj} = \partial \tau_k \cap \partial \tau_j, k\neq j$ and $\gamma_k = \partial \tau_k \cap \partial \Omega$.

Define the following plane wave function space(\cite{HY2014})
\begin{eqnarray}\label{Vp-space}
V_{h,2} =span\{\varphi_{m,l}:~1 \le l \le p, 1\le m \le N_h\},
\end{eqnarray}
where
\begin{equation*}
\varphi_{m,l}(\bx)=
\left\{
\begin{array}{ll}
y_{m,l}(\bx)     &\bx \in \bar{\tau}_m\\
     0          &\bx \in \Omega \backslash \bar{\tau}_m
\end{array}
\right. ,
\end{equation*}
and refer to \cite{HY2014} for the expression of $y_{m,l} (l=1,\cdots,p)$.

Then we get the PWLS formulation associated with Helmholtz problem \eqref{chap4-model equation}: find $u \in V_{h,2}$ such that
\begin{eqnarray*}
a^{(2)}(u,v) =  \mathcal{L}(v),~\forall v \in V_{h,2},
\end{eqnarray*}
where
\begin{eqnarray}\nonumber
  a^{(2)}(u,v) &=& \sum_{j\neq k} (\alpha_{kj}\int_{\gamma_{kj}} (u_k-u_j)\cdot\overline{(v_k-v_j)}ds
+\beta_{kj}\int_{\gamma_{kj}}
({\partial_{{\bf n}_k}u_k+\partial_{{\bf n}_j}u_j})\cdot\overline{(\partial_{{\bf n}_k}v_k+\partial_{{\bf n}_j}v_j)}ds)
  \nonumber\\
    &&~~~~~~~~+ \sum_{k=1}^{N_h} \nu_k \int_{\gamma_k} ((\partial_{\bf n}+i\kappa_k ){u}_k) \cdot \overline{(\partial_{\bf n}+i \kappa_k){v}_k}ds, \label{115-1}\\
\mathcal{L}(v) &=& \sum_{k=1}^{N_h} \nu_k \int_{\gamma_k} g\cdot\overline{(\partial_{\bf n}+i \kappa_k ){v}_k}ds.\nonumber
\end{eqnarray}
Here ${\bf n}_l$ is the unit normal vector of $\tau_{l}$, $\kappa_l=\kappa|_{\tau_l}$, $\overline{\diamond}$ denotes the complex conjugate of $\diamond$, and refer to  \cite{HL2017} for the expression of the Lagrange multipliers $\alpha_{kj}, \beta_{kj}, \nu_k$.

The discrete system of \eqref{chap4-model equation} associated with $V_{h,2}$ can be written as
\begin{eqnarray}\label{sys-helmholtz}
A_{h,2} u_{h,2}=f_{h,2},~~~~u_{h,2}\in V_{h,2}.
\end{eqnarray}

\begin{figure}[t]
\centering
\subfigure[]{ \label{fig-dd-Th}
\includegraphics[width=0.8in]{./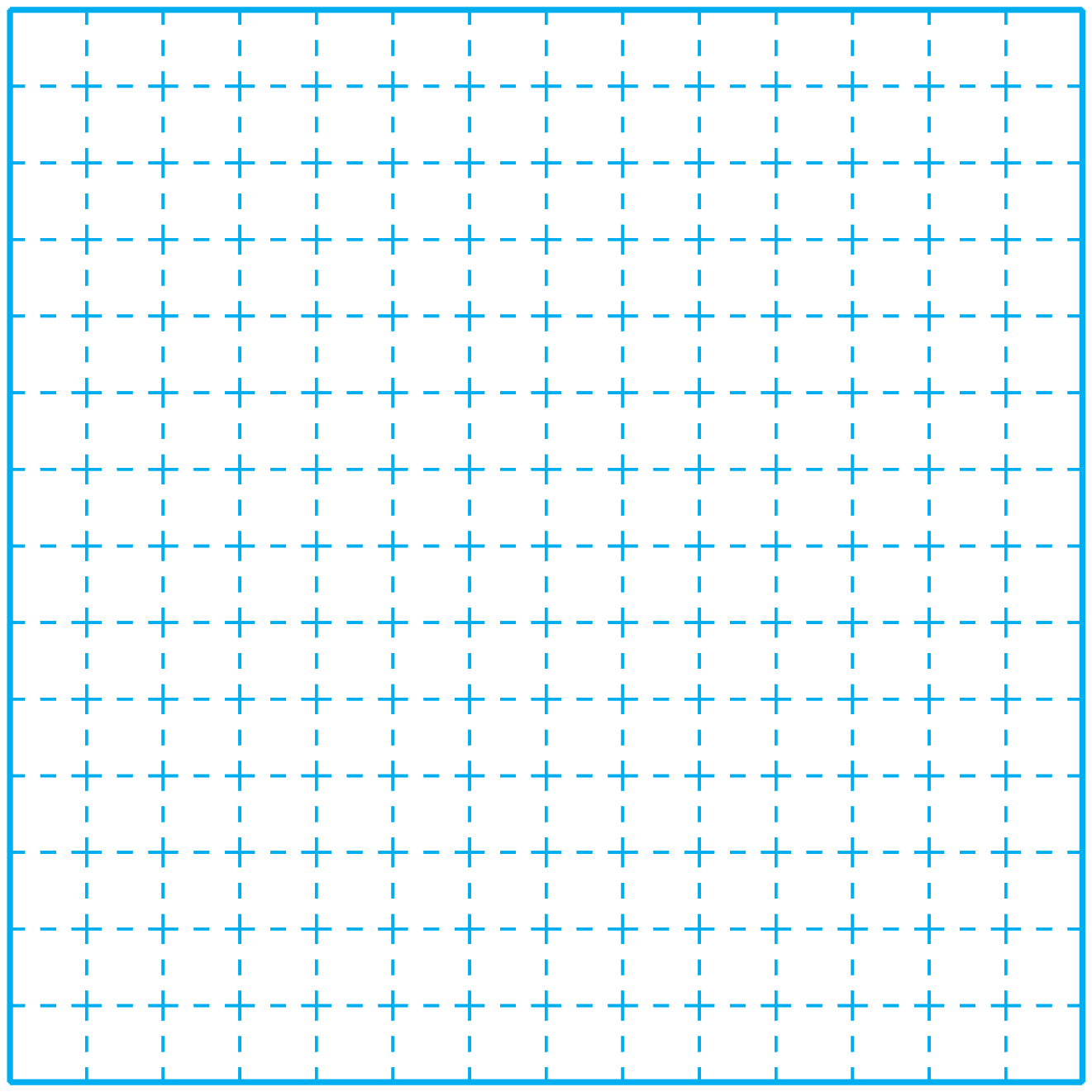}}
\hspace{0.001in}
\subfigure[]{ \label{fig-dd-1}
\includegraphics[width=0.8in]{./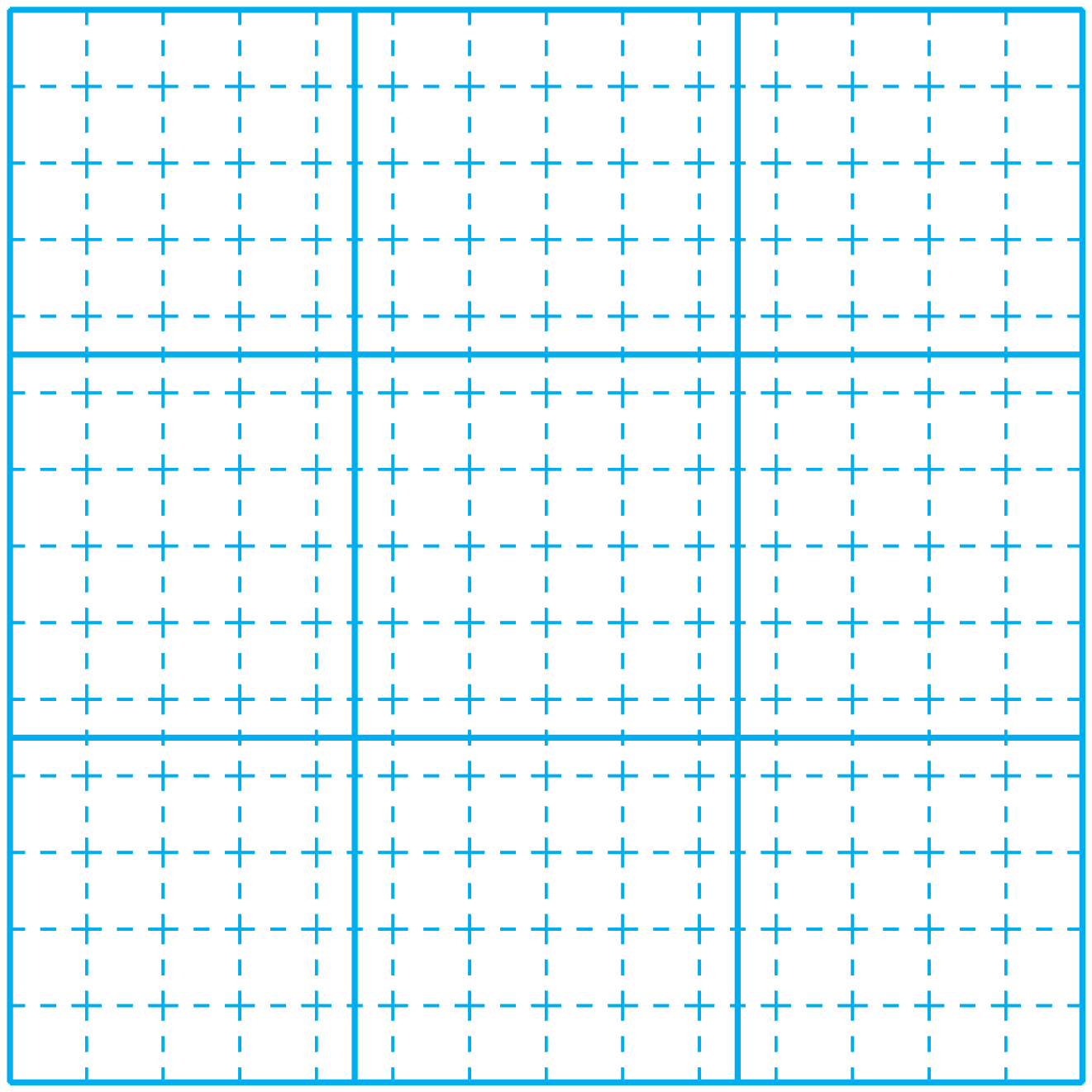}}
\hspace{0.001in}
\subfigure[]{ \label{fig-dd-2}
\includegraphics[width=0.8in]{./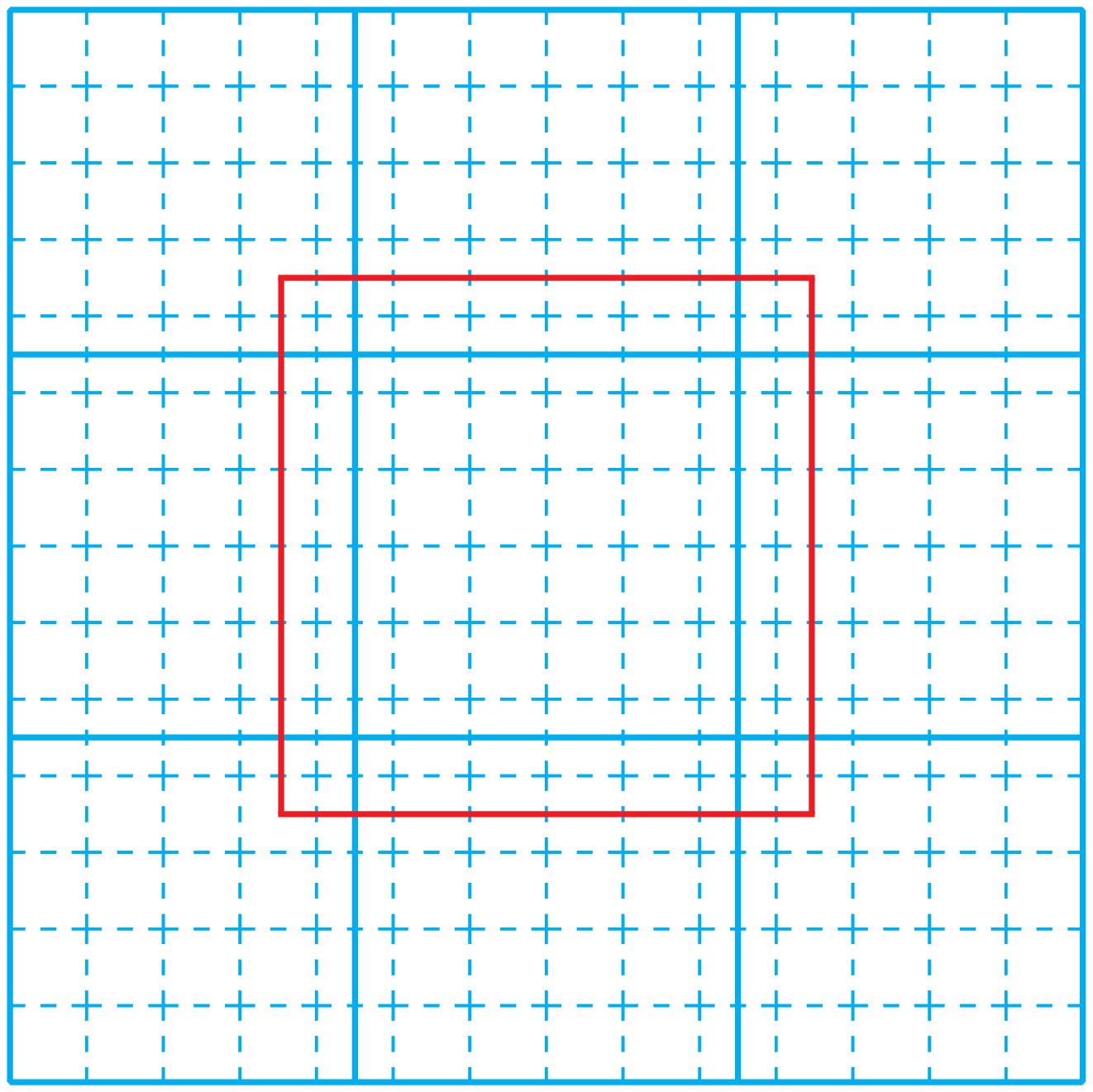}}
\hspace{0.001in}
{\caption{(a)~$\mathcal{T}_h$, (b)~$\mathcal{T}_h=3(4)$, $\Omega_i$: the square with solid line boundary (c)~$\Omega'_i:=\Omega^{(i)}_{l,h}$ with $l=1$: the square with red solid line boundary.}
\label{Fig:submesh-PWLS}}
\end{figure}

We introduce a special nonoverlapping domain decomposition $\{\Omega_i\}_{i=1}^N$ of $\mathcal{T}_h$(\cite{PWS2018}) where the interface pass through the elements(Fig. \ref{fig-dd-1}), and the overlapping subdomain $\Omega'_i:=\Omega^{(i)}_{l,h}$ is obtained by the similar way mentioned in subsection \ref{framework-TL-OS}(Fig. \ref{fig-dd-2}).
Using the above partition and $a^{(2)}(\cdot,\cdot)$ defined by \eqref{115-1}, we can construct an overlapping Schwarz preconditioner similar to $B^{-1}_s$ in subsection \ref{framework-TL-OS}. Numerical experiments show that the number of iterations is much more dependent on the model and mesh parameters than Poisson problem if we apply PCG method based on this preconditioner to solve \eqref{sys-helmholtz} directly. Therefore, how to design a TL-OS preconditioner based on the framework in subsection \ref{framework-TL-OS} is more urgent for PWLS system.

Let $\{\theta_i(x)\}^N_{i=1}$ be a partition of unity which are linear finite element functions defined on the dual partition of $\mathcal{T}_h$. Fig. \ref{1d-theati} shows some $\theta_i(x)$ in one dimension, where ``$\cdot$'' and ``$\times$'' represents the partition node and the midpoint of the element(also called the dual partition nodes) respectively. $\Omega_i$ and $\Omega'_i$ are separately the $i$-th nonoverlapping subdomain and overlapping subdomain with $l=1$. Let $\mathcal{Y}_h$ be the set of the dual partition nodes, then define $\theta_i(\bx)$ with its values at any $\bx\in \mathcal{Y}_h$ being
\begin{eqnarray*}
\theta_i(\bx)=
\left\{
\begin{array}{lll}
\frac{1}{|\mathcal{N}_{\bx}|}, & if ~ \bx\in \Omega'_i ~or~\bx\in \partial\Omega'_i\cap \partial\Omega \\
0,& otherwise
\end{array}
\right.,
\end{eqnarray*}
where $|\mathcal{N}_{\bx}|$ denote the number of elements in $\mathcal{N}_{\bx}=\{j: \bx\in \Omega'_j~or~\bx\in \partial\Omega'_j\cap \partial\Omega\}$. Direct calculation shows that these $\{\theta_i(\bx)\}_{i=1}^N$ satisfy \eqref{theta-def}.


Let $\mathcal{S}^{(i)}$ be the index set defined by \eqref{S-i-M}, and define the sesquilinear and Hermitian positive definite functional
\begin{eqnarray}\label{s-PWLS}
s^{(2)}_i(u,w)&=&\sum_{\gamma_{kj}\subset \bar{\Omega}_i}\beta_{kj}\int_{\gamma_{kj}} \sum\limits_{l\in \mathcal{S}^{(i)}}|\nabla \theta_l(\bx)|^2
(u_k \overline{w}_k+u_j\overline{w}_j ) ds\nonumber\\
&+&\sum_{\gamma_k \in \partial\Omega \cap \partial\bar{\Omega}_i } \nu_k \int_{\gamma_k} \sum\limits_{l\in \mathcal{S}^{(i)}}|\nabla \theta_l(\bx)|^2 u_k \overline{w}_kds.
\end{eqnarray}

Now we confirm that the sesquilinear form $s^{(2)}_i(\cdot,\cdot)$ defined by \eqref{s-PWLS} satisfies the ASSUMPTION \ref{assump-1-strong}. In fact, for any $u\in V_{h,2}$, using \eqref{115-1} and \eqref{s-PWLS}, we have
\begin{eqnarray*}
a^{(2)}_i(\theta_i u,\theta_i u)
   &\le&  \sum_{\gamma_{kj}\subset \Omega_i} (2\beta_{kj}\int_{\gamma_{kj}}
|\partial_{{\bf n}_k}u_k+\partial_{{\bf n}_j}u_j|^2+|u_k\partial_{{\bf n}_k}\theta_i +u_j\partial_{{\bf n}_j}\theta_i|^2ds
  \\ 
   &+& \alpha_{kj}\int_{\gamma_{kj}} |u_k-u_j|^2ds)+ \sum_{\gamma_k \subset \partial\Omega \cap \partial\Omega_i } 2\nu_k \int_{\gamma_k} |u_k\partial_{\bf n}\theta_i|^2+|(\partial_{\bf n}+i\kappa_k){u}_k|^2ds\\
&\le& 4 \big(a^{(2)}_i(u,u)
  + \sum_{\gamma_{kj}\subset \Omega_i}\beta_{kj}\int_{\gamma_{kj}}
|\nabla\theta_i|^2(|u_k|^2+|u_j|^2) ds\\
&+&\sum_{\gamma_k \subset \partial\Omega \cap \partial\Omega_i } \nu_k \int_{\gamma_k} |\nabla\theta_i|^2|u_k|^2ds\big)\le 4 (a^{(2)}_i(u,u)+s^{(2)}_i(u,u))
\end{eqnarray*}
namely, $s^{(2)}_i(\cdot,\cdot)$ satisfies the ASSUMPTION \ref{assump-1-strong} in subsection \ref{framework-TL-OS}. $\Box$

Using $s^{(2)}_i(\cdot,\cdot)$ and $a^{(2)}_i(\cdot,\cdot)$, we can introduce the corresponding coarse space $V_{0,2}:=V_0$ defined by \eqref{zjb-def}.
In an analogous way to the definitions of $A_{j,1}, A_{0,1}$, $\Pi_{j,1}$ and $\Pi_{0,1}$, let $A_{j,2}, A_{0,2}$, $\Pi_{j,2}$ and $\Pi_{0,2}$ be the operators corresponding to the plane wave function space $V_{h,2}$ and the sesquilinear form $a^{(2)}(\cdot,\cdot)$, then by formula \eqref{B-def-new}, we construct the TL-OS preconditioner for system \eqref{sys-helmholtz} as follows
\begin{eqnarray*}
B^{-1}_2 = \Sigma_{j=1}^N \Pi_{j,2} A_{j,2}^{-1}\Pi_{j,2}^*+\Pi_{0,2} A_{0,2}^{-1}\Pi_{0,2}^*.
\end{eqnarray*}

For the following three typical models(\cite{PWS2018}\cite{HZ2016}), we investigate the performance of PCG method with $B^{-1}_2$ as preconditioner for solving the system \eqref{sys-helmholtz}.

\begin{model}\label{chap4-Example-1}
Consider model problem \eqref{chap4-model equation} where $\Omega = (0,2) \times (0,1)$ and ~$c \equiv 1$, the exact solution of the problem can
be expressed as
\begin{eqnarray*}
u_{ex} = \cos(12 \pi y)(A_1 e^{-i \omega_x x} + A_2 e^{i \omega_x x}),
\end{eqnarray*}
where $\omega_x = \sqrt{\omega^2 - (12 \pi)^2}$, and coefficients $A_1$ and $A_2$ satisfy
\begin{eqnarray*}
& \left(
\begin{array}{cc}
\omega_x  & -\omega_x \\
(\omega - \omega_x)e^{-2i \omega_x} & (\omega + \omega_x)e^{2i \omega_x}
\end{array}
\right) \left(\begin{array}{c}
A_1 \\
A_2
\end{array}\right)
= \left(\begin{array}{c}
-i\\
0
\end{array}\right).
\end{eqnarray*}
\end{model}

\begin{model}\label{chap4-Example-2}
Consider model problem \eqref{chap4-model equation} where $\Omega = (0,7200)\times(0,3600)$, $\Gamma_{d} = \Gamma_{n} = \emptyset$, $g = x^2 + y^2$, and the wave speed $c$ is defined by(see Fig. ~\ref{chap4-fig-piecewise-constant})
\begin{equation*}
c(x,y) = \left\{\begin{array}{ll}
1800 & \mbox{if}~y \in [0,1200)\\
3600 & \mbox{if}~y \in [1200,2400)\\
5400 & \mbox{if}~y \in [2400,3600]
\end{array}\right.~~x\in [0,7200].
\end{equation*}
\end{model}

\begin{model}\label{chap4-Example-3}
Consider MODEL \ref{chap4-Example-2} where $c(x,y)$ is chosen randomly from $[1500$,$5500]$ for each grid cell(see Fig. \ref{chap4-fig-random}).
\end{model}

\begin{figure}[t]
\centering
\subfigure[]{\label{chap4-fig-piecewise-constant}
\includegraphics[width=1.0in]{./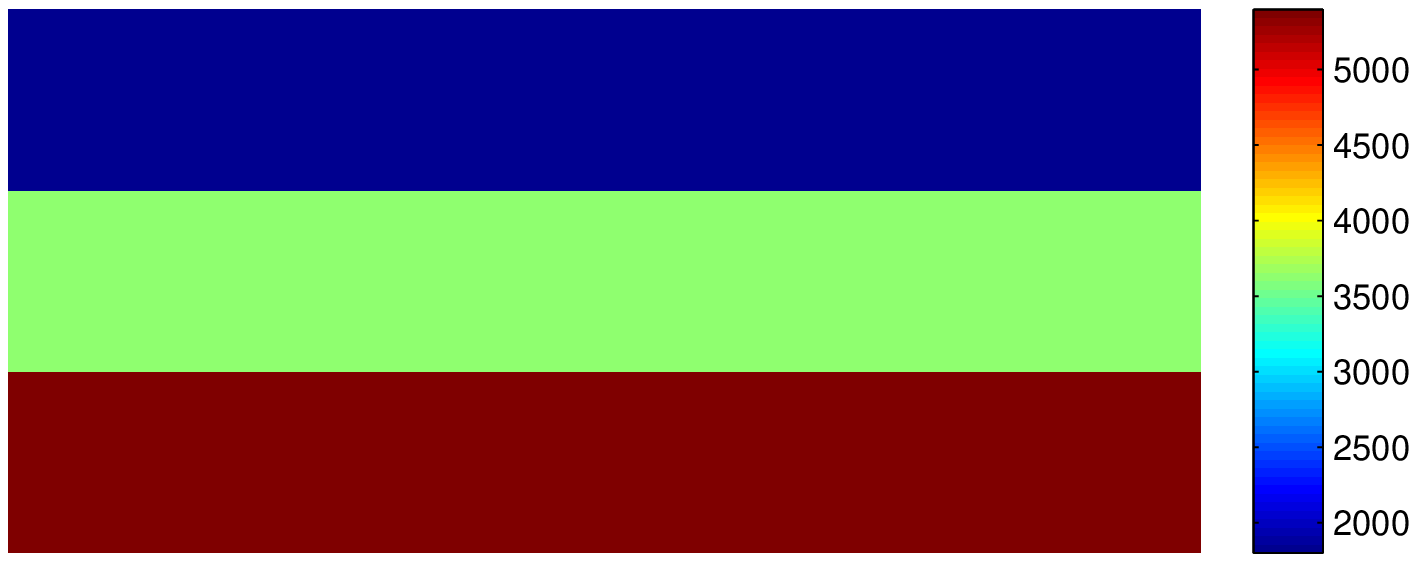}}
\hspace{0.01in}~~~~~~
\subfigure[]{ \label{chap4-fig-random} 
\includegraphics[width=1.0in]{./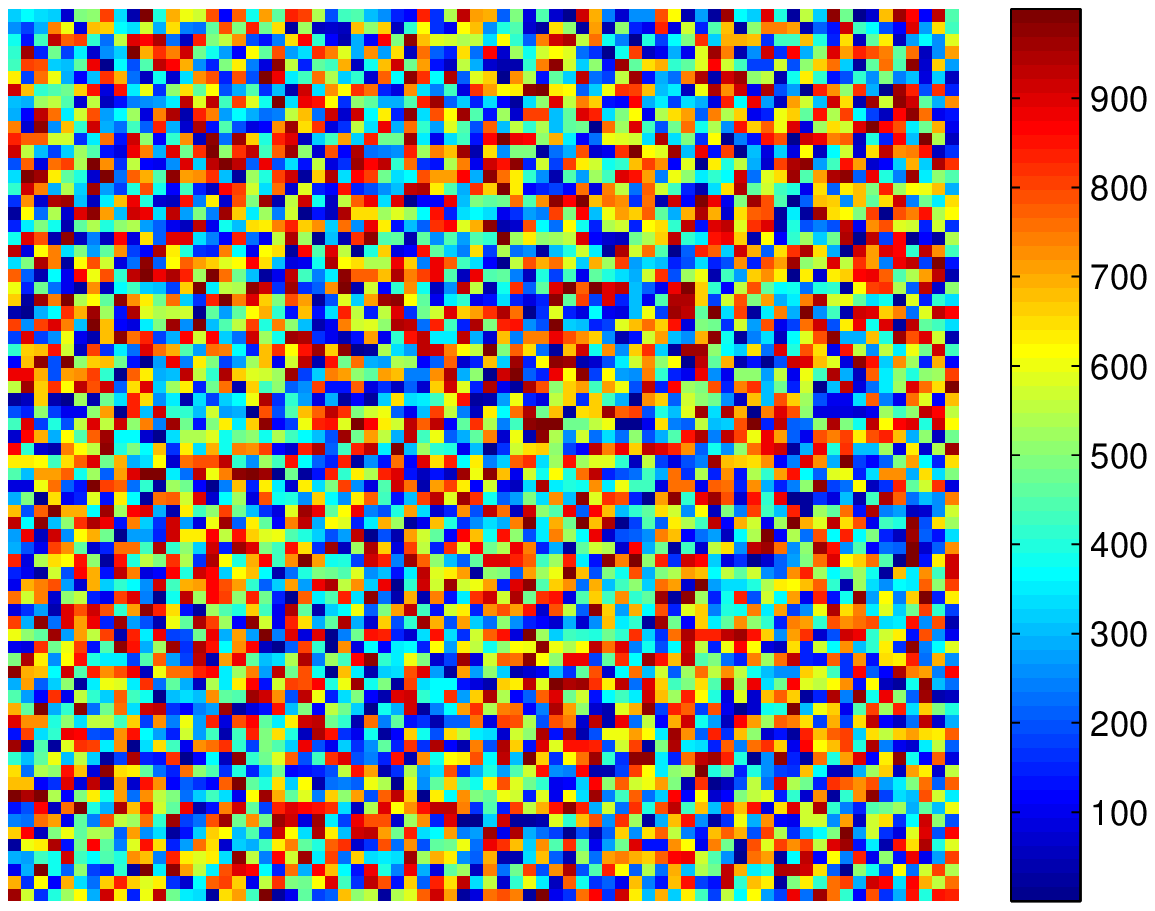}}
\hspace{0.01in}
{\caption{(a)~wave speed $c$ in MODEL \ref{chap4-Example-2}, (b)~wave speed $c$ in MODEL \ref{chap4-Example-3}}
\label{Fig:submesh-PWLS-c}}
\end{figure}

Denote $\mathcal{T}_h=n(m)$ where $m$ represents the number of complete elements in each subdomain for each direction which is different from subsection \ref{Poisson-TL-OS},  e.g. Fig. \ref{fig-dd-1} shows the case of $m=4$.
Set $\Lambda=1+\log(H/h+2)$ and $tol=10^{-5}$. In the numerical experiments, the selection principle of $n(m)$ and $p$ is: keeping $\omega h$ is constant and increasing $p$ or decreasing $h$ appropriately to make the relative error less than $5\times 10^{-3}$.

%
%

The left and right tables in TABLE \ref{101} show the number of iterations as $l$ and $n(m)$(with appropriate $p$) changing respectively. We find from the two tables that for a given model, the iterations of PCG method based on the designed TL-OS preconditioner $B^{-1}_2$ are weakly dependent on these factors.

\begin{table}
  \label{101}
  \tiny
  \caption{Left: $\omega=20\pi, n(m)=4(8), p=9$. Right: $\omega=20\pi, l=1$}
  \centering
\begin{tabular}{|c|c|c|c|}
\hline
$l$   & MODEL \ref{chap4-Example-1} &MODEL \ref{chap4-Example-2} &MODEL \ref{chap4-Example-3} \\
\hline
1   &16     & 13  & 25 \\
2   &14     & 11  & 26  \\
3   &14     & 11  & 28  \\
4   &13     & 10  & 28  \\
\hline
\end{tabular}~
\begin{tabular}{|c|c|c|c|c|c|}
  \hline
  $p$  & $n(m)$& MODEL \ref{chap4-Example-1} &MODEL \ref{chap4-Example-2} &MODEL \ref{chap4-Example-3} \\
 \hline
  9  &4(8)  &16     & 13  & 25  \\
     &4(12) &15    & 12  & 19  \\\hline
 10  &4(8) &18     & 10  & 18  \\
     &5(8) &16     & 12  & 24  \\
\hline
\end{tabular}
\end{table}

Based on economical framework proposed in subsection \ref{framework-economical},  we can give the following two economical TL-OS precondionters
\begin{eqnarray}\label{precond-ec-helmholtz}
(B^{(2)}_{k,\zeta})^{-1} = \Sigma_{j=1}^N \Pi_{j,2} A_{j,2}^{-1}\Pi_{j,2}^*+\Pi^{k,\zeta}_{0,2} (A^{k,\zeta}_{0,2})^{-1}(\Pi^{k,\zeta}_{0,2})^*,~~\zeta=\psi, \bar{\psi},
\end{eqnarray}
where $\Pi^{k,\zeta}_{0,2}: V^{k,\zeta}_{0,2}\rightarrow V_{h,2} (\zeta=\psi, \bar{\psi})$ are  identical lifting operators, the coarse space operator $A^{k,\zeta}_{0,2}: V^{k,\zeta}_{0,2}\rightarrow V^{k,\zeta}_{0,2}$ satisfies
$(A^{k,\zeta}_{0,2}u,v)=a^{(2)}(u,v), \forall u,v\in V^{k,\zeta}_{0,2}$,
and $V^{k,\zeta}_{0,2}\subset V_{h,2}$ here is the corresponding economical coarse space.

In the following, we only consider PCG method with $(B^{(2)}_{k,\psi})^{-1}$ as preconditioner to solve the PWLS system \eqref{sys-helmholtz}. The iteration counts are listed in TABLE \ref{ec-A-l-1} for MODEL \ref{chap4-Example-1}-MODEL \ref{chap4-Example-3} when $\omega$, $k$, $l$, $n(m)$ and $p$ are varying. In addition, we also consider the non-homogeneous case of Helmholtz equation, for which the right hand side of the first equation in \eqref{chap4-model equation} is $f=\left(1-\omega^{2}\right) \omega x \cos y$, $\Omega = (0,1)^2$, $g = (\partial_{\bf n} + i \omega) u_{ex}$
and the exact solution $u_{ex}(x, y)=\omega x \cos y+y \sin (\omega x)$.
The results are listed in TABLE \ref{non-home-ec-A}. The values outside and inside brackets in TABLE \ref{ec-A-l-1} and TABLE \ref{non-home-ec-A} correspond to $l=1,2$, respectively. We can see that the number of iterations is weakly dependent on the model and mesh parameters when $k\ge 2$.
\begin{table}[h]
\scriptsize\centering\caption{$l=1, 2$}
\label{ec-A-l-1}\vskip 0.1cm
\begin{tabular}{{|c|ccc|cc|cc|}}\hline
\multirow{2}{*}{$k$}& \multicolumn{3}{c|}{{\tiny $n(m)=12(4),p=10,\omega=20\pi$}} & \multicolumn{2}{c|}{\tiny $n(m)=8(7),p=13,\omega=40\pi$}  & \multicolumn{2}{c|}{\tiny $n(m)=10(11),p=14,\omega=80\pi$} \\\cline{2-8}
                     &  MODEL \ref{chap4-Example-1} &MODEL \ref{chap4-Example-2} &MODEL \ref{chap4-Example-3} & MODEL \ref{chap4-Example-1} &MODEL \ref{chap4-Example-2} & MODEL \ref{chap4-Example-1} & MODEL \ref{chap4-Example-2}\\       \hline
2                   & 17(16) &23(21) &30(43) &18(16) &14(11) &21(19) &14(13)  \\
3                   & 18(14) &15(11) &30(41) &19(17) &11(9) &22(20) &10(8) \\ 
4                   & 18(14) &15(11) &32(41) &19(16) &12(8) &22(19) &10(8)  \\ \hline
\end{tabular}
\end{table}



\begin{table}[H]
\footnotesize\centering\caption{$l=1, 2$}
\label{non-home-ec-A}\vskip 0.1cm
\begin{tabular}{{|c|c|c|c|}}\hline
$k$& \multicolumn{1}{c|}{{\scriptsize $\omega=4\pi,p=7,n(m)=5(6)$}} & \multicolumn{1}{c|}{\scriptsize $\omega=8\pi,p=9,n(m)=5(6)$}  & \multicolumn{1}{c|}{\scriptsize $\omega=8\pi,p=9,n(m)=8(7)$} \\\cline{1-4}
1                   &15(17) &18(17) &19(21) \\  
2                   &15(15) &18(15) &19(18) \\
3                   &15(15) &18(15) &19(17) \\ \hline
\end{tabular}
\end{table}

\section{Conclusion}

A coarse space with lower computational complexity is designed for general Hermitian positive and definite discrete variational problems, with which we established the algorithmic frameworks of the corresponding TL-OS and two kinds of economical TL-OS preconditioners. Based on certain assumptions, we established the condition number estimate theory of the preconditioning systems. As the application of the above frameworks, firstly, we designed TL-OS and economical TL-OS preconditioners for Poisson equations in three dimension, and estimated the condition number bounds. Moreover, we found that the economical preconditioning estimation is independent of the jump range of the coefficient under special jump distribution. In addition, the computational efficiency of the existing precondtioner is greatly improved due to the sparsity of the coefficient matrix of the new coarse basis formula. Secondly, TL-OS and economical TL-OS preconditioners are designed for the PWLS discrete system of Helmholtz equation. Numerical results show that the PCG methods based on the proposed preconditioners have good stability.

In future work, we will do more in-depth research on the TL-OS preconditioning estimate theory of PWLS discrete system of Helmholtz equation, and extend the work of this paper to other complex models. In addition, we will study the corresponding multi-level algorithm to reduce the dimension of coarse space and enhance the calculation scale and efficiency.

\section*{Acknowledgments}
We would like to thank Professor Qiya Hu for his suggestions of the implementation about non-homogeneous Helmholtz equations.


\end{document}